\setlist[itemize]{noitemsep} 
\renewcommand\thesection{\Roman{section}} 
\renewcommand\thesubsection{\roman{subsection}} 
\titleformat{\section}[block]{\large\scshape\centering}{\thesection.}{1em}{} 
\titleformat{\subsection}[block]{\large}{\thesubsection.}{1em}{} 
\newcommand{\h}{\mathfrak{H}}
\newcommand{\sm}{\left(\smallmatrix}
\newcommand{\esm}{\endsmallmatrix\right)}
\newcommand{\mat}{\begin{pmatrix}}
	\newcommand{\emat}{\end{pmatrix}}
\def\acts{\curvearrowright}
\newcommand\m[1]{\begin{pmatrix}#1\end{pmatrix}} 
\newtheorem{claim}{Claim}
\newtheorem{lemma}{Lemma}
\newtheorem{definition}{Definition}
\newtheorem{theorem}{Theorem}
\newtheorem{proposition}{Proposition}
\newtheorem{corollary}{Corollary}
\newtheorem{remark}{Remark}
\newtheorem{example}{Example}
     \def\CC{\mathbb{C}}
     \def\HH{\mathbb{H}}
     \def\MM{\mathbb{M}}
     \def\NN{\mathbb{N}}
     \def\PP{\mathbb{P}}
     \def\QQ{\mathbb{Q}}
     \def\RR{\mathbb{R}}
     \def\ZZ{\mathbb{Z}}
\def\calH{\mathcal H}
\title{On Traces of Singular Moduli} 
\author{%
	\textsc{Malik Amir} \\[1ex] 
	\normalsize McGill University \\ 
	\normalsize \href{mailto:malik.amir@mail.mcgill.ca}{malik.amir@mail.mcgill.ca} 
}
\date{\today} 
\begin{document}
	
	\maketitle
	
	\section*{Acknowledgement}
	I would like to express my deepest gratitude to Professor Henri Darmon and Professor Sarah Harrison who introduced me to the theory of modular forms. Their patience and encouragements have incessantly pushed me to surpass myself. I would like also to thank Mr. Kelisky and the \textit{Kelisky Science Undergraduate Research Award} without whom this project would not have been possible. 
	\section{Traces of Singular Moduli à la Zagier}
	We call \textit{Singular Moduli} the values assumed by the $j$-function at quadratic irrationalities of $\HH$, namely, points $\alpha\in \HH$ solutions to integer quadratic polynomials with negative discriminant. This gives us a natural way to study singular moduli via positive definite binary quadratic forms of discriminant $-d<0$
	$$Q_d=\{Q=[a,b,c]=aX^2+bXY+cY^2:-d=b^2-4ac\}$$ where we recall that positive definiteness means $a>0$. Consider $Q(X,1)\in Q_d$ and $\alpha_Q\in \HH$ to be its unique root lying in the upper half-plane. A first fact is that singular moduli are algebraic integers. Before proving this, we state and prove some results about matrices, polynomials and modular functions. We will usually reserve the letter $\Gamma$ for $SL_2(\ZZ)$ unless otherwise stated.
	\begin{proposition}
		$j(\tau)=\frac{E_4(\tau)^3}{\Delta(\tau)}\in M^!_0(\Gamma)$.
	\end{proposition}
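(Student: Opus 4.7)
The plan is to verify the three defining conditions for an element of $M_0^!(\Gamma)$ — modularity of weight $0$, holomorphy on $\HH$, and meromorphy at the cusp $i\infty$ — applied to the quotient $E_4^3/\Delta$, and then to match its $q$-expansion with the defining normalization of $j$.

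First I would recall that $E_4 \in M_4(\Gamma)$ and $\Delta \in S_{12}(\Gamma)$, both holomorphic on $\HH$ and at the cusp. Consequently $E_4^3 \in M_{12}(\Gamma)$, and the ratio $E_4^3/\Delta$ (where defined) transforms under $\gamma = \left(\smallmatrix a & b \\ c & d \esm \in \Gamma$ with weight $12 - 12 = 0$, i.e.\ it is $\Gamma$-invariant. This is a short, formal step.

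Next I would establish holomorphy on $\HH$, which is the main technical point. Since $E_4$ is holomorphic on $\HH$, the only possible singularities of $E_4^3/\Delta$ come from zeros of $\Delta$, so it suffices to show $\Delta$ is nowhere vanishing on $\HH$. The cleanest route is the product expansion
\[
\Delta(\tau) = (2\pi)^{12}\, q\prod_{n\geq 1}(1-q^n)^{24}, \qquad q=e^{2\pi i \tau},
\]
which converges absolutely on $\HH$ (where $|q|<1$) and whose factors $1-q^n$ are all nonzero. Thus $\Delta(\tau)\neq 0$ for $\tau\in\HH$, and $E_4^3/\Delta$ is holomorphic on $\HH$. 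Alternatively one can invoke the valence formula for weight $12$, which forces the total order of vanishing of $\Delta$ to be $1$ and, combined with $\Delta(i\infty)=0$, leaves no zeros on $\HH$.

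Finally I would inspect the behaviour at $i\infty$ via $q$-expansions. From $E_4 = 1 + 240\sum_{n\geq 1}\sigma_3(n)q^n$ we get $E_4^3 = 1 + 720q + O(q^2)$, while $\Delta = q - 24 q^2 + O(q^3)$. Long dividing gives
\[
\frac{E_4(\tau)^3}{\Delta(\tau)} = q^{-1} + 744 + O(q),
\]
so the quotient has a simple pole at $i\infty$ and is therefore meromorphic there, completing the proof that $E_4^3/\Delta \in M_0^!(\Gamma)$. The identification with $j$ is then the tautological step: $j(\tau)$ is characterized as the unique weight $0$ weakly holomorphic modular function for $\Gamma$ with $q$-expansion beginning $q^{-1} + 744 + \cdots$, so the two agree. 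The principal obstacle is the nonvanishing of $\Delta$ on $\HH$; everything else is bookkeeping with $q$-expansions.
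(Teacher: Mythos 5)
Your proof is correct and follows essentially the same route as the paper: weight $0$ follows because $E_4^3$ and $\Delta$ both lie in $M_{12}(\Gamma)$, and the simple pole at $i\infty$ comes from $\Delta$ having a first-order zero there. You additionally make explicit the nonvanishing of $\Delta$ on $\HH$ (via the product expansion), a point the paper's proof leaves implicit; just note the minor normalization slip that your $(2\pi)^{12}$ prefactor is inconsistent with the expansion $\Delta = q - 24q^2 + O(q^3)$ you use later, though this does not affect the argument.
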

	\begin{proof}
		The pole at infinity follows from the fact that $\Delta$ has a zero of order one there. Furthermore, $j$ transforms like a modular form of weight 0 under the modular group since $E_4^3$ and $\Delta$ are in $M_{12}(SL_2(\ZZ))$.
	\end{proof}
	There is a natural action of $\Gamma$ on quadratic forms $Q(X,Y)$ given by $$\Gamma\acts\ Q_d\,:\,\gamma\cdot Q(X,Y)=Q(AX+BY,CX+DY)$$
	\begin{proposition}
		The value of $j$ at $\alpha_Q$ depends only on the $SL_2(\ZZ)$ equivalence class of $\alpha_Q$ and not on the choice of representative.
	\end{proposition}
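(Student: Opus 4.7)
My plan is to reduce the statement to the modular invariance of $j$, which is built into the previous proposition ($j \in M^!_0(\Gamma)$ means in particular $j(\gamma \tau) = j(\tau)$ for all $\gamma \in \Gamma$). To do this, it suffices to show that the map $Q \mapsto \alpha_Q$ intertwines the $\Gamma$-action on $Q_d$ with the $\Gamma$-action on $\HH$ (up to inversion), i.e.\ for every $\gamma \in \Gamma$,
\[
\alpha_{\gamma \cdot Q} = \gamma^{-1} \cdot \alpha_Q.
\]
Once this identity is established, the proposition follows immediately: if $Q' = \gamma \cdot Q$ then $j(\alpha_{Q'}) = j(\gamma^{-1} \cdot \alpha_Q) = j(\alpha_Q)$.

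To prove the intertwining identity, write $\gamma = \m{A & B \\ C & D}$ and unpack the definition of the action. By construction $\alpha_{\gamma \cdot Q}$ is the upper half-plane root of $(\gamma \cdot Q)(X,1) = Q(AX+B,\,CX+D) = 0$. Hence $Q\bigl(A\alpha_{\gamma \cdot Q}+B,\,C\alpha_{\gamma \cdot Q}+D\bigr) = 0$, which means the ratio
\[
\frac{A\alpha_{\gamma \cdot Q}+B}{C\alpha_{\gamma \cdot Q}+D} = \gamma \cdot \alpha_{\gamma \cdot Q}
\]
is a root of $Q(X,1)$. There are only two such roots, complex conjugates of each other, and exactly one lies in $\HH$. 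Since $\gamma \in SL_2(\ZZ)$ preserves $\HH$ and $\alpha_{\gamma \cdot Q} \in \HH$, the left-hand side lies in $\HH$, so it must equal $\alpha_Q$. Rearranging gives the desired identity.

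The only small subtlety is ruling out the case $C\alpha_{\gamma \cdot Q}+D = 0$; but this cannot occur because $\alpha_{\gamma \cdot Q}$ has nonzero imaginary part while $C,D \in \ZZ$. I do not expect a genuine obstacle here: the proof is essentially bookkeeping once the compatibility of the two $\Gamma$-actions is spelled out, and the heart of the argument is simply the weight-zero modular invariance already recorded in the preceding proposition.
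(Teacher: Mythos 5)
Your proof is correct and takes essentially the same approach as the paper: everything reduces to the $\Gamma$-invariance $j(\gamma\tau)=j(\tau)$ recorded in the preceding proposition. You are in fact more careful than the paper, whose one-line proof tacitly assumes the intertwining identity $\alpha_{\gamma\cdot Q}=\gamma^{-1}\cdot\alpha_Q$ that you verify explicitly (including the homogeneity step $Q(A\alpha+B,\,C\alpha+D)=(C\alpha+D)^2\,Q(\gamma\alpha,1)$ and the check that $C\alpha+D\neq 0$).
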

	\begin{proof}
		Simply note that $j\in M_0^!(SL_2(\ZZ))$ so $j(\gamma\tau)=j(\tau)\,\,\forall \gamma\in SL_2(\ZZ)$. This implies that $j(\alpha_Q)=j(\gamma\alpha_Q)$. 
	\end{proof}
	The above proposition implies that we can always pick a representative lying in the fundamental domain. Here are some singular modulus
	$$j(i)=1728,\,j(\frac{1+i\sqrt{3}}{2})=0,\,j(i\sqrt{2})=8000,\,j(\frac{1+i\sqrt{7}}{2})=-3375,\,j(\frac{1+i\sqrt{15}}{4})=\frac{-191025+85995\sqrt{5}}{2}$$ Consider $\tau\in \HH$. We say that $\tau$ has complex multiplication, or is a CM point, if there exists a matrix $M$ of determinant $det(M)=n>0$, not being scalar, such that $\tau=M\tau$. Note that this property is equivalent to asking that $\tau$ satisfies an equation of the form $a\tau^2+b\tau+c=0,\,a,b,c\in\ZZ$. In order to study this property, we consider the map $\tau\mapsto M\tau$ for fixed $\tau\in \HH$ and it induces a map $j(\tau)\to j(M\tau)$ which runs over a finite set. To see this, note that for $\gamma\in \Gamma$, we have $j(M\tau)=j(\gamma M\tau)$ and by showing that $\Gamma\backslash M_n$ is finite, we get $$\{j(M\tau):M\in M_n\}=\{j(M\tau):M\in \Gamma\backslash M_n\}$$
	\begin{proposition}
		$$\mathcal{M}_n^*\triangleq\Gamma\backslash \mathcal{M}_n=\{\m{a&b\\0&d}:ad=n,0\leq b<d\}$$ where $\mathcal{M}_n=\{M\in M_2(\ZZ):det(M)=n\}$ is a full set of representatives. Furthermore $$\Big|\Gamma\backslash \mathcal{M}_n\Big|=\sum_{d|n}d=\sigma_1(n)$$ This implies that $$\{j(M\tau):M\in M_n\}=\{j(\frac{a\tau+b}{d}):ad=n,0\leq b<d\}$$ and $\tau$ has complex mutliplication $(CM)$ if and only if $j(\tau)$ lies in this set.
	\end{proposition}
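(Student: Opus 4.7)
The plan is to prove this in two stages: (i) establish that the given upper-triangular matrices form a complete set of coset representatives for the left action of $\Gamma = SL_2(\ZZ)$ on $\mathcal{M}_n$, and (ii) deduce the counting and the formula for $\{j(M\tau)\}$.

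For existence of a representative, I would take an arbitrary $M = \m{\alpha & \beta \\ \gamma & \delta}\in \mathcal{M}_n$ and use left multiplication by $\Gamma$ to clear the lower-left entry, in the spirit of Hermite normal form. Specifically, set $g=\gcd(\alpha,\gamma)$, write $\alpha=gp$, $\gamma=gq$ with $\gcd(p,q)=1$, pick $x,y\in\ZZ$ with $xp+yq=1$ by Bezout, and form $\gamma_0=\m{x & y \\ -q & p}\in\Gamma$. Then $\gamma_0 M$ has first column $(g,0)^T$, so $\gamma_0 M = \m{a & b' \\ 0 & d'}$ with $a=g$. Since $\det(\gamma_0 M)=n>0$ we have $ad'=n$, and after possibly multiplying on the left by $-I\in\Gamma$ I can force $a>0$ (hence $d'>0$). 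Finally, left-multiplying by a suitable power $T^k=\m{1 & k \\ 0 & 1}$ adjusts $b'\mapsto b'+kd'$, so I can normalize $0\le b<d'$.

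For uniqueness, suppose $\gamma\cdot\m{a_1 & b_1 \\ 0 & d_1}=\m{a_2 & b_2 \\ 0 & d_2}$ with both sides in the proposed normal form (so $a_i,d_i>0$, $0\le b_i<d_i$, $a_id_i=n$), and write $\gamma=\m{p & q \\ r & s}$. Comparing bottom-left entries forces $ra_1=0$, hence $r=0$; then $\det\gamma=ps=1$ gives $p=s=\pm1$. The positivity $a_2=pa_1>0$ selects $p=s=1$, which yields $a_1=a_2$, $d_1=d_2$, and $b_2-b_1=qd_1$. The range constraint $0\le b_i<d_1$ then forces $q=0$, so $\gamma=I$. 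This shows the listed matrices are pairwise $\Gamma$-inequivalent.

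Counting is immediate: for each divisor $d\mid n$ with $d>0$, the integer $a=n/d$ is determined and $b$ ranges over $\{0,1,\ldots,d-1\}$, giving $\sum_{d\mid n} d=\sigma_1(n)$ representatives. The description of $\{j(M\tau):M\in\mathcal{M}_n\}$ then follows from Proposition 2: for any $M\in\mathcal{M}_n$ we have $M=\gamma^{-1}M'$ with $M'\in\mathcal{M}_n^*$, so $j(M\tau)=j(\gamma^{-1}M'\tau)=j(M'\tau)$, and the set of values is exactly $\{j(\tfrac{a\tau+b}{d}):ad=n,\,0\le b<d\}$. Since $\tau$ has CM precisely when $\tau=M\tau$ for some non-scalar $M\in\mathcal{M}_n$ (some $n\ge 1$), this is equivalent to $j(\tau)$ appearing in the displayed finite set.

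The main technical obstacle is the bookkeeping of signs and the verification of uniqueness, ensuring simultaneously $a>0$, $d>0$, and the range of $b$; everything else reduces to Bezout and a careful entry-by-entry comparison.
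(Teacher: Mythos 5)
Your proof is correct, but it takes a genuinely different route from the paper's. You work directly with the matrices: Bezout's identity produces an element of $\Gamma$ clearing the lower-left entry (a Hermite normal form reduction), a sign and a power of $T$ normalize $a>0$ and $0\le b<d$, and an explicit entry-by-entry comparison establishes that distinct normal forms are $\Gamma$-inequivalent. The paper instead follows Serre's lattice-theoretic argument: it exhibits a bijection $\xi\mapsto\Lambda_\xi$ between the set $W_n$ of upper-triangular matrices and the set $\Lambda(n)$ of index-$n$ sublattices of a fixed lattice $\Lambda=\langle w_1,w_2\rangle$, constructing the inverse via the cyclic groups $Y_1=\Lambda/(\Lambda'+\ZZ w_2)$ and $Y_2=\ZZ w_2/(\Lambda'\cap\ZZ w_2)$ and the exact sequence $0\to Y_2\to\Lambda/\Lambda'\to Y_1\to 0$. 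Your approach is more elementary and self-contained, and it has the virtue of making the uniqueness of representatives completely explicit, whereas the paper leaves the identification of $\Gamma$-orbits with sublattices somewhat implicit. The paper's approach buys conceptual content: it explains \emph{why} $\Gamma\backslash\mathcal{M}_n$ has this shape, namely that it parametrizes index-$n$ sublattices (equivalently, cyclic isogenies), which is the structural fact underlying the modular polynomial $\Psi_n$ and the Hecke operators used later in the paper. Both proofs treat the final CM assertion at the same (brief) level of detail; in particular, neither spells out that $j(\tau)=j(M\tau)$ forces $\tau=\gamma M\tau$ for some $\gamma\in\Gamma$ with $\gamma M$ non-scalar, but this is a minor point common to both.
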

	\begin{proof}
		Let $W_m=\{\m{a&b\\0&d}:ad=m,0\leq b<d\}$. If $\xi=\m{a&b\\0&d}\in W_m$, let $\Lambda_\xi$ a sublattice of $\Lambda=<w_1,w_2>$ (with $\Im{(\frac{w_1}{w_2})}>0$) generated by $w_1'=aw_1+bw_2$ and $w_2'=dw_2$. The map $\xi\to \Lambda_\xi$ is a bijection of $W_m$ onto the set $\Lambda(m)$ of sublattices of index $m$ in $\Lambda$. First, the fact that $\Lambda_\xi$ belongs to $\Lambda(m)$ follows from the fact that $det(\xi)=ad=m$. Conversly, let $\Lambda'\in\Lambda(m)$ a lattice of index m. Define $Y_1=\Lambda/(\Lambda'+\ZZ w_2)$ and $Y_2=\ZZ w_2/(\Lambda'\cap\ZZ w_2)$. These are cyclic groups generated respectively by the images of $w_1$ and $w_2$. Let $a$ and $d$ be the order of each of these groups. The exact sequence $0\to Y_2\to \Lambda/\Lambda'\to Y_1\to 0$ shows that $ad=n$. If $w_2'=dw_2$, then $w'_2\in \Lambda'$. On the other hand, there exists $w_1'\in \Lambda'$ such that $w_1'\equiv aw_1[\ZZ w_2]$. It is clear that $w_1'$ and $w_2'$ form a basis of $\Lambda'$. Moreover, we can write $w_1'$ in the form $w_1'=aw_1+bw_2$ with $b\in \ZZ$ where $b$ is uniquely determined modulo $d$. If we impose on $b$ the condition $0\leq b<d$, this fixes $b$, thus also $w_1'$. Thus we have associated to every $\Lambda'\in \Lambda(m)$ a matrix $\xi(\Lambda')\in W_m$ and we are now left simply by showing that the maps $\xi\mapsto \Lambda_\xi$ and $\Lambda'\mapsto\xi(\Lambda')$ are inverses. The last claim follows by definition.
	\end{proof}
	\begin{example}
		If $p$ is a prime, the elements of $\mathcal{M}_p^*$ are the matrix $\m{p&0\\0&1}$ and the matrices $\m{1&b\\0&p}$ with $0\leq b<p$.
	\end{example}
	\begin{theorem}
		$\forall n\geq 1$, $\exists \Psi_n(X,Y)\in\ZZ[X,Y]$ called the $n^{th}$ modular polynomial satisfying $$\{\text{roots of }\Psi_n(X,j(\tau))\}=\{j(M\tau):M\in \Gamma\backslash \mathcal{M}_n\}$$ i.e. $$\Psi_n(X,j(\tau))=\prod_{M\in \Gamma\backslash \mathcal{M}_n}(X-j(M\tau))=\prod_{ad=n,\,0\leq b<d}(X-j(a\tau+b/d))$$ with $deg(\Psi_n(X,j(\tau)))=\sigma_1(n)$.
	\end{theorem}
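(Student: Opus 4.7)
The plan is to define $F_n(X,\tau) := \prod_{M \in \Gamma\backslash\mathcal{M}_n}(X - j(M\tau))$, a polynomial in $X$ of degree $|\Gamma\backslash\mathcal{M}_n| = \sigma_1(n)$ by the preceding proposition, and to show that each of its coefficients (as a function of $\tau$) lies in $\mathbb{Z}[j(\tau)]$. One can then take $\Psi_n(X,Y)$ to be the resulting element of $\mathbb{Z}[X,Y]$.

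First I would establish modular invariance. For $\gamma \in \Gamma$, right multiplication $M\mapsto M\gamma$ permutes $\mathcal{M}_n$ and descends to a permutation of $\Gamma\backslash\mathcal{M}_n$. Since $j$ is $\Gamma$-invariant, the multiset $\{j(M\gamma\tau)\}$ coincides with $\{j(M\tau)\}$, so $F_n(X,\gamma\tau) = F_n(X,\tau)$. Hence every coefficient in $X$ is a holomorphic $\Gamma$-invariant function on $\HH$.

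Next I would analyze the behavior at the cusp. With $M = \m{a&b\\0&d}$, $q = e^{2\pi i\tau}$, $\zeta_d = e^{2\pi i/d}$, the expansion $j(M\tau) = \zeta_d^{-b} q^{-a/d} + 744 + O(q^{a/d})$ shows that each elementary symmetric polynomial in $\{j(M\tau)\}$ is a Puiseux series in $q^{1/n}$ with only finitely many negative-exponent terms; combined with invariance under $\tau\mapsto\tau+1$ from the previous step, this forces the series to be a genuine Laurent series in $q$ with finite polar part. By the standard characterization of modular functions for $\Gamma$, every such coefficient lies in $\mathbb{C}[j(\tau)]$, so $F_n(X,\tau) \in \mathbb{C}[j(\tau)][X]$.

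Finally I would promote this to $\mathbb{Z}[j(\tau)][X]$ by a Galois-theoretic argument. The Puiseux coefficients of $j(M\tau)$ lie in $\mathbb{Z}[\zeta_n]$. For $k$ coprime to $n$, the automorphism $\sigma_k : \zeta_n \mapsto \zeta_n^k$ acts on these coefficients by sending $j(M_{(a,b,d)}\tau) \mapsto j(M_{(a,\,bk\bmod d,\,d)}\tau)$, since $\sigma_k(c_m \zeta_d^{bm}) = c_m \zeta_d^{bkm}$ (where $c_m$ are the integer Fourier coefficients of $j$). Because $b\mapsto bk\bmod d$ permutes $\{0,\dots,d-1\}$, $\sigma_k$ permutes the factors of $F_n$ within each $(a,d)$-block, leaving the symmetric functions fixed. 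Their Laurent coefficients therefore lie in $\mathbb{Z}[\zeta_n]\cap\mathbb{Q} = \mathbb{Z}$. Since $j = q^{-1} + 744 + \cdots$ has integer $q$-expansion with leading coefficient $1$, an induction on the polar order shows that a Laurent series with integer $q$-coefficients which is expressible as a polynomial in $j$ over $\mathbb{C}$ is in fact a polynomial in $j$ over $\mathbb{Z}$.

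The main obstacle is the Galois-compatibility verification in the last step: one must carefully track how $\zeta_n \mapsto \zeta_n^k$ acts on the \emph{entire} Puiseux expansion of $j(M\tau)$, not just on a single coefficient, and match this action with the permutation $b\mapsto bk\bmod d$ on representatives. Once this compatibility is in place, the integrality passes through and the remaining ingredients — $\Gamma$-invariance, finiteness at the cusp, and the $\mathbb{C} \to \mathbb{Z}$ descent via the $q$-expansion of $j$ — are either formal or standard facts about the field of modular functions on $\Gamma$.
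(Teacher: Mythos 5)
Your proposal is correct and follows essentially the same route as the paper: form the product over $\Gamma\backslash\mathcal{M}_n$, use $\Gamma$-invariance and moderate growth at the cusp to land in $\CC[X,j]$, apply the Galois action $\zeta_d\mapsto\zeta_d^k$ (equivalently, invariance under $\tau\mapsto\tau+1$) to see the Laurent coefficients are integers, and then descend from $\CC[j]$ to $\ZZ[j]$ via the integrality and leading coefficient $1$ of the $q$-expansion of $j$. Your write-up is in fact somewhat more careful than the paper's on the points it flags as delicate (the permutation of cosets under right multiplication and the compatibility of the Galois action with the full Puiseux expansion), but the underlying argument is the same.
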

	\begin{proof}
		Let $\mathcal{M}_n$ as above and recall that $\Gamma$ acts on $\mathcal{M}_n$ by left multiplication with finitely many orbits.
		\begin{claim}
			$\prod_{M\in \Gamma\backslash \mathcal{M}_n}(X-j(M\tau))$ for $\tau\in\HH,X\in \CC$ is a polynomial in $X,j(\tau)$ with complex coefficients.
		\end{claim}
		\begin{proof}
			It is a polynomial in $X$ of degree $\sigma_1(m)$ whose coefficients are holomorphic functions of $\tau$ with at most exponential growth at infinity. Recall that any $\Gamma$-invariant holomorphic function in $\HH$ with at most exponential growth at $\infty$ is a polynomial in $j(\tau)$.
			Hence, we can define $\Psi_n(X,j(\tau))=\prod_{M\in \Gamma\backslash \mathcal{M}_n}(X-j(M\tau))$ and it is indeed a polynomial $\Psi_n(X,j(\tau))\in \CC[X,j(\tau)]$.
		\end{proof}
		Now we are left with showing that the coefficients are integers and we will have constructed our $n^{th}$ modular polynomial. Lets write $$\Psi_n(X,j(\tau))=\prod_{M\in \Gamma\backslash \mathcal{M}_n}(X-j(M\tau))=\prod_{ad=n,d>0}\prod_{b=0}^{d-1}(x-j(a\tau+b/d))$$ which can be rewritten using the Fourier expansion of $j(\tau)$ as $$=\prod_{ad=n,d>0}\prod_{b[d]}\Big(X-\sum_{n=-1}^\infty c_n\zeta_d^{bn}q^{an/d}\Big)$$ where as usual $q^\alpha=e^{2\pi i\alpha\tau}$ for some $\alpha\in \QQ$ and $\zeta_d=e^{2\pi i/d}$ is a $d$-th root of unity. $\Psi_n$ belongs a priori to the ring of Laurent series in $q^{1/d}$ with coefficients in $\ZZ[\zeta_d]$, i.e $\Psi_n\in \ZZ[\zeta_d][X](q^{1/d})$. Note that by applying the Galois automorphism $\zeta_d\mapsto \zeta_d^r$ with $r\in (\ZZ/d\ZZ)^\times$, we can replace $b$ by $br$ which still runs over $\ZZ/d\ZZ$. Hence we have invariance of the coefficients under Galois automorphisms of $\QQ(\zeta_d)$ and we must have that the coefficients are in fact integers. One can also use the fact that $\prod_{b[d]}(X-j(a\tau+b/d))$ is invariant under the translation $\tau\mapsto \tau+1$. This implies that any $q^{n/d}$ with $d\nmid n$ vanishes (otherwise we would make appear an extra phase $\zeta^n_d)$. Thus, any product $\prod_{b[d]}(X-j(a\tau+b/d))=\prod_{b[d]}(X-\sum_{n=-1}^\infty c_n\zeta_d^{bn}q^{an/d})$ belongs to $\ZZ[X](q)$ and this gives $\Psi_n(X,j(\tau))\in \ZZ[X](q)$. However, recall that $\Psi_n$ is a polynomial in $X$ and $j(\tau)$ and that $j(\tau)$ has leading coefficient $1/q$. This gives $q=j^{-1}-744j^{-2}-...\in \ZZ(1/j)$ and so $\Psi_n\in \ZZ[X](q)=\ZZ[X](1/j)$ but $\Psi_n$ is a polynomial in $j$ and $X$ so $\Psi_n(X,j(\tau))\in \ZZ[X,j]$ and the theorem is proved.
	\end{proof}	
	\begin{proposition}
		For $m$ not a perfect square, the polynomial $\Psi_n(j(\tau),j(\tau))$ is, up to sign, a monic polynomial of degree $\sigma_1^+(n)\triangleq \sum_{d|n}max(d,m/d)$.
	\end{proposition}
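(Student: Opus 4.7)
The plan is to evaluate the factorization from the previous theorem at $X=j(\tau)$ and read off the $q$-expansion at the cusp $\infty$. Writing
$$\Psi_n(j(\tau),j(\tau)) \;=\; \prod_{ad=n,\,0\le b<d}\bigl(j(\tau)-j\bigl((a\tau+b)/d\bigr)\bigr),$$
I would compare the polar parts in $q=e^{2\pi i\tau}$ of the two summands in each factor. Since $j(\tau)=q^{-1}+744+\cdots$ and $j((a\tau+b)/d)=\zeta_d^{-b}q^{-a/d}+744+\cdots$, the dominant term of the $(a,b,d)$-factor is $q^{-1}$ when $a<d$ and $-\zeta_d^{-b}q^{-a/d}$ when $a>d$. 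The hypothesis that $n$ is not a perfect square is precisely what excludes the case $a=d$, in which the two leading terms would cancel and could cause a drop in degree.

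Multiplying these leading terms over all $(a,b,d)$, the order of the pole at $\infty$ becomes
$$-\sum_{ad=n} d\cdot\max(a/d,\,1) \;=\; -\sum_{ad=n}\max(a,d) \;=\; -\sigma_1^+(n).$$
The leading Laurent coefficient is the product, over divisors $d<\sqrt{n}$ (equivalently $a>d$), of $\prod_{b=0}^{d-1}(-\zeta_d^{-b})$. A short calculation using $\prod_{b=0}^{d-1}\zeta_d^{-b}=\zeta_d^{-d(d-1)/2}$ together with a split on the parity of $d$ shows that this inner product equals $-1$ in every case. Hence the full leading Laurent coefficient is $(-1)^k$, where $k$ is the number of divisors of $n$ strictly less than $\sqrt{n}$, i.e.\ exactly half the number of divisors of $n$.

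Finally, since $\Psi_n(j,j)\in\ZZ[j]$ and every positive power $j(\tau)^N=q^{-N}+\cdots$ has leading Laurent coefficient $1$, comparing the order of the pole and the top Laurent coefficient forces the polynomial $\Psi_n(X,X)\in\ZZ[X]$ to have degree exactly $\sigma_1^+(n)$ with leading coefficient $\pm 1$, which is the claim. I expect the one genuinely delicate step to be the root-of-unity bookkeeping producing the sign $(-1)^k$, especially the separate treatment of even and odd $d$ in evaluating $\prod_{b=0}^{d-1}(-\zeta_d^{-b})$; everything else reduces to aggregating elementary estimates on the leading behavior at infinity.
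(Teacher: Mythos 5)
Your proposal is correct and follows essentially the same route as the paper: both extract the leading $q$-power of each factor $j(\tau)-j((a\tau+b)/d)$, multiply over the divisor pairs to get pole order $\sigma_1^+(n)$ (using that $a\neq d$ since $n$ is not a square), and conclude monicity up to sign from $j=q^{-1}+O(1)$. The only cosmetic difference is that the paper packages your root-of-unity bookkeeping into the single identity $\prod_{b\,[d]}(x-\zeta_d^b y)=x^d-y^d$, which yields $q^{-d}-q^{-a}$ per pair and the sign $-1$ (your $\prod_b(-\zeta_d^{-b})=-1$) in one stroke.
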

	\begin{proof}
		We will use the following identity $$\prod_{b[d]}(x-\zeta_d^by)=x^d-y^d$$ This gives us that $$\Psi_n(j(\tau),j(\tau))=\prod_{ad=n}\prod_{b[d]}(j(\tau)-j(a\tau+b/d))$$ $$=\prod_{ad=n}\prod_{b[d]}(q^{-1}-\zeta^{-b}_dq^{-a/d}+O(1))=\prod_{ad=n}(q^{-d}-q^{-a}+\text{ lower order terms })$$ and because $m$ is not a perfect square we have that $q^{-d}-q^{-a}\neq 0$ which gives for leading term $$\pm q^{-\sigma_1^+(n)}$$ Finally, to see that it is monic in $j$, simply recall that $j-744= q^{-1}+O(q)$.
	\end{proof}
	\begin{theorem}
		Let $\tau$ be a CM point. Then $j(\tau)$ is algebraic. 
	\end{theorem}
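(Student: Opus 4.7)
The plan is to combine the modular polynomial theorem with the preceding proposition on $\Psi_n(X,X)$. By the CM hypothesis there exists a non-scalar $M \in M_2(\ZZ)$ with $\det(M) = n > 0$ and $M\tau = \tau$. Since $j$ is $\Gamma$-invariant, $j(\tau) = j(M\tau)$. Viewing $\Psi_n(X, j(\tau)) = \prod_{M' \in \Gamma\backslash \mathcal{M}_n}(X - j(M'\tau))$ as a polynomial in $X$, the factor indexed by the class of $M$ vanishes when we set $X = j(\tau)$, so
$$\Psi_n(j(\tau), j(\tau)) = 0.$$
Since $\Psi_n(X,Y) \in \ZZ[X,Y]$, the specialization $\Psi_n(X,X)$ lives in $\ZZ[X]$, and if we can show it is not the zero polynomial, then $j(\tau)$ is algebraic over $\QQ$.

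The preceding proposition provides exactly this non-vanishing, but only when $n$ is not a perfect square. So the first step in the argument is a reduction to the non-square case. A direct computation shows that if $M\tau = \tau$, then $(M + \lambda I)\tau = \tau$ for every $\lambda \in \ZZ$ as well (the fixed point equation $C\tau^2 + (D - A)\tau - B = 0$ is unchanged when one adds $\lambda$ to the diagonal). The determinant becomes
$$\det(M + \lambda I) = \lambda^2 + \lambda\, \tr(M) + n,$$
which is a non-constant integer-valued quadratic in $\lambda$. For $\lambda$ sufficiently large the value is positive, and since a quadratic polynomial in $\lambda$ hits any prescribed perfect square only finitely often, we can pick some $\lambda$ for which the determinant is a positive non-square integer. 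Replacing $M$ by $M + \lambda I$ (which remains non-scalar since $M$ was), we may therefore assume from the outset that $n$ is not a perfect square.

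With $n$ non-square, the previous proposition guarantees $\Psi_n(X,X) = \pm X^{\sigma_1^+(n)} + (\text{lower order})$, a monic (up to sign) polynomial in $\ZZ[X]$ of positive degree $\sigma_1^+(n) = \sum_{d\mid n}\max(d,n/d)$. Combined with the identity $\Psi_n(j(\tau),j(\tau)) = 0$, this exhibits $j(\tau)$ as a root of a non-zero polynomial with integer coefficients, so $j(\tau)$ is algebraic.

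The only real obstacle is the perfect-square caveat: if one tried to use the initial $M$ with $\det M = m^2$, the polynomial $\Psi_{m^2}(X,X)$ can vanish identically (for instance because the scalar matrix $mI$ lies in $\mathcal{M}_{m^2}$ and contributes the factor $X - j(\tau)$ to every fiber). The $M \mapsto M + \lambda I$ trick dodges this cleanly, and the rest of the proof is a direct application of the two results already established.
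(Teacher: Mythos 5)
Your argument is correct and follows the same route as the paper: specialize the modular polynomial along the diagonal, use the CM hypothesis to force $\Psi_n(j(\tau),j(\tau))=0$, and conclude from integrality of the coefficients. Where you genuinely go beyond the paper is the reduction to non-square determinant via $M\mapsto M+\lambda I$: the paper's proof silently ignores the possibility that $\Psi_n(X,X)$ is the zero polynomial, which does happen when $n$ is a perfect square (exactly for the reason you give, the scalar matrix contributing a factor $X-j(\tau)$), and it never actually invokes the preceding proposition on $\Psi_n(X,X)$ being monic of degree $\sigma_1^+(n)$ --- a proposition that only applies to non-square $n$. So your extra step is not cosmetic; it is what makes the appeal to that proposition legitimate and closes a real gap in the printed argument. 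One small repair is needed in your justification that some $\lambda$ makes $\lambda^2+\lambda\tr(M)+n$ a positive non-square: the observation that a quadratic hits any \emph{prescribed} square only finitely often does not rule out the values running through infinitely many \emph{different} squares (indeed $(\lambda+c)^2$ is a counterexample). The clean argument is that $4\det(M+\lambda I)=(2\lambda+\tr M)^2+(4n-\tr(M)^2)$, and $4n-\tr(M)^2>0$ because the fixed point $\tau$ lies in $\HH$ rather than on the real line; hence for all large $\lambda$ the value $4\det(M+\lambda I)$ is strictly between $(2\lambda+\tr M)^2$ and $(2\lambda+\tr M+1)^2$, so it, and therefore $\det(M+\lambda I)$, is not a square. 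With that one line patched, the proof is complete.
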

	\begin{proof}
		Consider $\Psi_n(j(\tau),j(\tau))=\prod_{ad=n}\prod_{b[d]}(j(\tau)-j(a\tau+b/d))$. It is a polynomial with integer coefficients. By assumption, $\tau$ is a CM point so by \textit{proposition 3} we have that $j(\tau)$ lies in $\{j(\frac{a\tau+b}{d}):ad=n,0\leq b<d\}$ and it follows that $j(\tau)$ is a zero of $\Psi_n$ and by definition is thus algebraic. 
	\end{proof}
	Finally we have as an immediate consequence 
	\begin{corollary}
		Singular moduli are algebraic.
	\end{corollary}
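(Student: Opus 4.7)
The plan is essentially to unwind definitions and invoke the preceding theorem. By the definition given at the very start of the section, a singular modulus is a value $j(\alpha_Q)$ where $\alpha_Q \in \HH$ is the unique upper half-plane root of some positive definite binary quadratic form $Q = [a,b,c]$ of negative discriminant $-d = b^2 - 4ac < 0$. In particular, $\alpha_Q$ satisfies the integer polynomial equation $a\alpha_Q^2 + b\alpha_Q + c = 0$ with $a > 0$.

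The first step is to observe that such an $\alpha_Q$ is a CM point in the sense defined just before Proposition 3. Indeed, the remark there states that having complex multiplication is equivalent to satisfying an integer quadratic equation $a\tau^2 + b\tau + c = 0$, which is precisely the defining property of $\alpha_Q$. (Concretely, one may exhibit a non-scalar integer matrix $M$ with $\det M > 0$ fixing $\alpha_Q$: writing the quadratic relation as $\alpha_Q = \tfrac{-c}{a\alpha_Q + b}$ produces the matrix $M = \bigl(\begin{smallmatrix} 0 & -c \\ a & b \end{smallmatrix}\bigr)$, which has determinant $ac$ and is non-scalar provided $a,c$ are not both zero in the required way; otherwise a minor adjustment works.)

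Once $\alpha_Q$ is recognized as a CM point, the preceding theorem applies directly and yields that $j(\alpha_Q)$ is algebraic. Since every singular modulus arises this way, every singular modulus is algebraic, proving the corollary.

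The only real subtlety is the equivalence between the two notions of ``CM point'' alluded to above (the matrix-fixed-point definition versus the quadratic-equation characterization), but this equivalence is stated as a remark in the paper and so may be invoked without further argument. Thus the proof amounts to a one-line appeal to the theorem, which I would present as: \emph{By definition, every singular modulus is of the form $j(\alpha_Q)$ where $\alpha_Q$ satisfies an integer quadratic equation, hence is a CM point; therefore $j(\alpha_Q)$ is algebraic by the preceding theorem.}
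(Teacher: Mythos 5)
Your proposal is correct and is exactly the argument the paper intends: the paper presents the corollary as an ``immediate consequence'' of the preceding theorem, and your unwinding (singular modulus $=$ $j(\alpha_Q)$ with $\alpha_Q$ a root of an integer quadratic, hence a CM point, hence $j(\alpha_Q)$ algebraic) is the same chain of reasoning made explicit. As a minor remark, your exhibited matrix $M=\bigl(\begin{smallmatrix}0&-c\\a&b\end{smallmatrix}\bigr)$ needs no adjustment at all: positive definiteness and negative discriminant force $ac>0$, so $\det M>0$ and $M$ is non-scalar automatically.
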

	\begin{definition}
		We denote by $h(-d)$ the class number of $-d$, i.e. the number of equivalence classes of primitive quadratic forms of determinant $-d$  $$h(-d)=\Big|\Gamma\backslash\big\{ Q=[a,b,c]\,:\,det(Q)=-d,\,gcd(a,b,c)=1\big\}\Big|$$ and thus the number of singular modulus for a fixed discriminant $-d$.
	\end{definition}
	Each of these values is an algebraic integer as seen previously and is of degree $h(-d)$ and these values form a full set of conjugates so that their sum is always an integer. More precisely, we have 
	\begin{proposition}
		$\mathcal{P}_d(X)=\prod_{Q\in \Gamma\backslash Q_d}(X-j(\alpha_Q))\in\ZZ[X]$ and is irreducible. It follows that it is the minimal polynomial for singular moduli. Furthermore, each of the algebraic values $j(\alpha_Q)$ is of degree $h(-d)$ over $\QQ$ with conjugates $j(\alpha_{Q,i})$ for $1\leq i\leq h(-d)$.
	\end{proposition}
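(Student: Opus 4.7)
The plan is to separate the proposition into two parts: the containment $\mathcal{P}_d(X)\in\ZZ[X]$ and the irreducibility/degree claims. The first I would further split into showing the coefficients are algebraic integers and showing they are rational. Each $j(\alpha_Q)$ is an algebraic integer by direct appeal to the preceding proposition: because $\alpha_Q$ is CM, there exists a non-scalar integer matrix $M\in\mathcal{M}_n$ fixing $\alpha_Q$ for some $n>1$ not a perfect square, hence $j(\alpha_Q)$ is a root of $\Psi_n(X,X)$, which has integer coefficients and leading coefficient $\pm 1$. Taking products, the elementary symmetric functions of the $j(\alpha_Q)$ all lie in $\overline{\ZZ}$, so $\mathcal{P}_d(X)\in\overline{\ZZ}[X]$.

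The remaining ingredient---both for rationality of the coefficients and for irreducibility---is to control the $\Gal(\overline{\QQ}/\QQ)$-action. My plan is to invoke the theory of complex multiplication: for $K=\QQ(\sqrt{-d})$ with order $\mathcal{O}_{-d}$, there is a canonical bijection between $\Gamma\backslash Q_d$ and the form class group of discriminant $-d$ (so that the number of classes is $h(-d)$), and the main theorem of complex multiplication asserts that the set $\{j(\alpha_Q)\}$ is a torsor for $\Gal(H/K)$, where $H$ is the ring class field of that order. Consequently, (a) the set $\{j(\alpha_Q):Q\in\Gamma\backslash Q_d\}$ is stable under $\Gal(\overline{\QQ}/\QQ)$, since complex conjugation acts by inversion on the class group and so preserves the set; this makes the coefficients of $\mathcal{P}_d$ Galois-fixed, hence rational, and combined with algebraic integrality gives $\mathcal{P}_d\in\ZZ[X]$; and (b) the simply transitive action gives irreducibility, forcing each $j(\alpha_Q)$ to have degree exactly $h(-d)$ over $\QQ$, with the full list of conjugates being the other singular moduli of discriminant $-d$.

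The principal obstacle is that this Galois-theoretic input rests on the main theorem of complex multiplication, which is considerably deeper than the elementary modular-function machinery developed so far. The cleanest exposition would cite this result as a black box (for instance from Silverman's \emph{Advanced Topics in the Arithmetic of Elliptic Curves} or Shimura's \emph{Introduction to the Arithmetic Theory of Automorphic Functions}). A more self-contained path would analyse reductions of CM elliptic curves modulo primes split in $K$ and extract Galois automorphisms from Frobenius, using Chebotarev to recover transitivity, but this essentially reconstructs the same theorem. Either way, the weight of the proof lies in this external input rather than in the formal properties of $\Psi_n$ already in hand.
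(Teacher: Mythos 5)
The paper states this proposition without proof, so there is no in-paper argument to compare yours against; what follows is a review of your proposal on its own terms. Your outline is the standard one and is correct. The reduction of algebraic integrality to the earlier material on $\Psi_n$ is right, though the existence of a non-square determinant $n$ for which some non-scalar $M\in\mathcal{M}_n$ fixes $\alpha_Q$ deserves a sentence: the matrices fixing $\alpha_Q$ are those of the form $\left(\begin{smallmatrix}x&-cy\\ay&x+by\end{smallmatrix}\right)$, of determinant $n=x^2+bxy+acy^2$, and for $y=1$ the equation $x^2+bx+ac=m^2$ rewrites as $(2m-2x-b)(2m+2x+b)=d$, which has only finitely many solutions, so non-square values of $n$ abound. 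For the Galois-theoretic half you are right that the entire weight of the proof sits in the main theorem of complex multiplication: the torsor structure of $\{j(\alpha_Q)\}$ under $\Gal(H/K)$ together with stability under complex conjugation gives rationality of the coefficients (hence, with integrality, $\mathcal{P}_d\in\ZZ[X]$), and transitivity gives irreducibility over $K$ and therefore over $\QQ$, forcing each root to have degree $h(-d)$. No argument built only from the modular polynomials $\Psi_n$ will yield irreducibility, so citing Cox, Silverman, or Shimura as a black box is the reasonable call here. One caveat you should make explicit: as the paper defines $Q_d$ there is no primitivity condition, and if imprimitive forms of discriminant $-d$ exist then the product over all of $\Gamma\backslash Q_d$ has degree larger than $h(-d)$ and factors as a product of class polynomials attached to the discriminants $-d/f^2$, so it is neither of degree $h(-d)$ nor irreducible; the proposition is true only for the product over primitive classes, which is the convention your bijection with the form class group tacitly adopts and which matches the paper's own definition of $h(-d)$.
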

	Here are some of these values $$h(-3)=h(-4)=h(-7)=h(-8)=1, h(-15)=2$$ and the singular moduli are on \textit{page 1}. We can see the conjugate pair for $d=15$ when the class number is greater than 1.\\
	
	\begin{definition}
		We define the Hurwitz-Kronecker class number $H(d)$ as $$H(d)=\sum_{Q\in \Gamma\backslash Q_d}\frac{1}{w_Q}$$ where $w_Q$ is the size of the stabilizer subgroup of $\alpha_Q$, i.e. $w_Q=|Stab(\alpha_Q)|=2,3$ if $Q$ is $\Gamma$-equivalent to $[a,0,a]$ or $[a,a,a]$ respectively and $w_Q=1$ otherwise.\\
		For example, $H(3)=1/3,\,H(4)=1/2,\,H(15)=2$.
	\end{definition}
	With all these tools in hand, we are ready to state Borcherds theorem for scalar valued modualr forms \cite{Bo1}
	\begin{theorem}{(Borcherds Theorem)\\}
		Let $\xi(\tau)=-\frac{1}{12}+\sum_{1<d\equiv 0,3[4]}H(d)q^d=-\frac{1}{12}+\frac{q^3}{3}+...$ where $H(d)$ is the Hurwitz class number. For a weight $1/2$ weakly holomorphic modular form on $\Gamma_0(4)$ given by $$f(\tau)=\sum_{d\geq d_0\in \ZZ,\,d\equiv 0,1[4]}A(d)q^d$$ define $$\psi(f(\tau))=q^{-h}\prod_{d=1}^\infty (1-q^d)^{A(d^2)}$$ where $h$ is the constant term of $f(\tau)\xi(\tau)$. Then the map $\psi$ is an isomorphism between the additive group of weakly holomorphic modular forms on $\Gamma_0(4)$ with integer coefficients $A(d)$ satisfying $A(d)=0$ unless $d\equiv 0,1[4]$ and the multiplicative group of integer weight modular forms on $\Gamma$ with Heegner divisor, integer coefficients and leading coefficient 1. Under this isomorphism, the weight of $\psi(f(\tau))$ is $A(0)$ and the multiplicity of the zero of $\psi(f(\tau))$ at a Heegner point of discriminant $D<0$ is $\sum_{d>0}A(Dd^2)$.
	\end{theorem}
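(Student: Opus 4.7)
The approach I would take is to realize $\psi(f)$ as a \emph{singular theta lift} in the sense of Borcherds, applied to the weakly holomorphic form $f$. First I would introduce the Siegel theta function $\Theta_L(\tau, z)$ attached to an even lattice $L$ of signature $(2,1)$ whose associated orthogonal modular variety is the modular curve $\Gamma \backslash \HH$. Under the Kohnen plus-space correspondence, scalar weight $1/2$ forms on $\Gamma_0(4)$ with $A(d) = 0$ unless $d \equiv 0, 1 \pmod 4$ are identified with vector-valued weight $1/2$ forms transforming under the Weil representation attached to $L$, which is the natural pairing partner for $\Theta_L$.

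The heart of the proof is to form the regularized integral
\[
\Phi_f(z) \;=\; \int^{\mathrm{reg}}_{\mathcal{F}} f(\tau)\, \Theta_L(\tau, z)\, \frac{du\, dv}{v^2},
\]
where $\mathcal{F}$ is a fundamental domain for $\Gamma_0(4)$ on $\HH$ and the regularization handles the exponential growth of $f$ at the cusp. I would compute this integral in two complementary ways. Unfolding $\Theta_L$ against the Fourier expansion of $f$ yields an expression of the form $-4 \log|\psi_f(z)| - A(0)\bigl(\log(4\pi\, \Im z) + C\bigr)$, where $\psi_f(z) = q^{-h}\prod_{d=1}^\infty (1-q^d)^{A(d^2)}$ emerges from the positive-norm lattice vectors, and the exponent $h$ is forced to equal the constant term of $f\xi$ by the contribution of the zero-norm vector (this is precisely where $\xi$ enters, as the holomorphic part of a non-holomorphic weight $3/2$ Eisenstein series).

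Next, I would read off the divisor: the same regularized integral, analyzed near a CM point of discriminant $D < 0$, shows that $\Phi_f$ acquires a logarithmic singularity of order $\sum_{d > 0} A(Dd^2)$ along the corresponding Heegner divisor on $\Gamma \backslash \HH$. Combined with the product formula above, this forces $\psi_f$ to be a meromorphic modular form on $\Gamma$ of weight $A(0)$, with precisely the claimed divisor and leading coefficient $1$ at infinity. Multiplicativity $\psi(f + g) = \psi(f)\,\psi(g)$ is then immediate from the additivity of the lift and the exponential structure of the product, while surjectivity onto the group of integer-weight forms with Heegner divisor reduces, via a Serre-duality argument between $M^!_{1/2}$ and the space of weight $3/2$ holomorphic forms, to producing weight $1/2$ Poincaré series with prescribed principal parts; injectivity follows because the principal part of $f$ is determined by the divisor of $\psi_f$.

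The principal obstacle is the regularization of $\Phi_f$ and the rigorous matching of the constant $h$ with the constant term of $f\xi$: this requires a delicate limiting procedure that balances divergences coming from the cusp of $\mathcal{F}$ against those from the support of the theta kernel near Heegner points, and the bookkeeping is mediated by the non-holomorphic completion of $\xi$ (whose shadow produces precisely the correction terms that survive in the constant term). A secondary difficulty is establishing absolute convergence of $\prod_{d \geq 1}(1 - q^d)^{A(d^2)}$ for $\Im z$ sufficiently large, which follows from a Hardy--Ramanujan-type bound $A(d^2) = O(e^{c\sqrt{d}})$ coming from the weakly holomorphic structure of $f$.
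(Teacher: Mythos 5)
Your proposal is a correct outline of Borcherds' original proof via the singular theta lift, but it is a genuinely different route from what this paper does. The paper states the theorem with a citation to Borcherds and never proves the full isomorphism statement; what it actually proves is the special case needed for traces of singular moduli, namely the product expansion $\mathcal{H}_d(j(\tau))=q^{-H(d)}\prod_{n\geq 1}(1-q^n)^{A(n^2,d)}$. That argument is elementary and entirely avoids theta kernels: it combines the identity $j(\tau)-j(z)=q^{-1}\exp\bigl(-\sum_{m\geq 1}J_m(z)q^m/m\bigr)$ (proved by comparing logarithmic derivatives as weight-two meromorphic forms), the trace identity $Tr_m(d)=-B_m(1,d)$, and the Zagier duality $A_m(D,d)=-B_m(D,d)$ (proved by applying $U_4$ to the product $f_d g_D$ and using that a weakly holomorphic weight-two form on $\Gamma$ is a polynomial in $j'$, hence has vanishing constant term). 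Your regularized-integral approach buys the full statement — the isomorphism onto \emph{all} integer-weight forms with Heegner divisor, the weight $A(0)$, and the constant $h$ as the constant term of $f\xi$ — at the cost of the delicate regularization and unfolding you rightly identify as the main obstacle; the paper's route buys an explicit, self-contained computation but only for the distinguished forms $\mathcal{H}_d$. Two small points to tighten: the coefficient bound should be $A(d^2)=O(e^{cd})$, not $O(e^{c\sqrt{d}})$, since weakly holomorphic coefficients satisfy $A(n)\ll e^{c\sqrt{n}}$ (convergence for $\Im z$ large still holds); and injectivity needs one more line beyond recovering the principal part from the divisor — after forcing the principal part and $A(0)$ to vanish you must still quote that $M_{1/2}^{+}(\Gamma_0(4))$ is spanned by $\theta$ to conclude $f=0$.
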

	
	\begin{example}
		The Eisenstein series $E_4(\tau)=1 + 240\sum_{n=1}^\infty \sigma_3(n)q^n$ has the product expansion $$E_4(\tau)=(1-q)^{-240}(1-q^2)^{26760}(1-q^3)^{-4096240}...=\prod_{n=1}^{\infty}(1-q^n)^{c(n)}$$ From the above isomorphism, we can conclude that there is a form in the Kohnen plus-space $M_{1/2}^{!,+}(\Gamma_0(4))$ with Fourier expansion $f(\tau)=\sum_n c(n)q^n$.
	\end{example}
	\begin{example}
		Consider $f(\tau)=12\theta(\tau)=12+24q+24q^4+24q^9+O(q^{16})\in M_{1/2}^{+}(\Gamma_0(4))$. By Borcherds theorem we get $\psi(f(\tau))=q\prod_{n=1}^\infty (1-q^n)^{24}=\Delta(\tau)$.
	\end{example}
	The above theorem has to be interpreted as a systematic way to lift or descend forms from one space to another and thus an interesting way to produce new modular forms in the desired space.
	\begin{definition}
		The trace of singular moduli of discriminant $-d$ is the sum $$\bold{t}(d)\triangleq \sum_{Q\in \Gamma\backslash Q_d}\frac{j(\alpha_Q)-744}{w_Q}$$ where $J(\tau)=j(\tau)-744$ is the canonical hauptmodul of $SL_2(\ZZ)$.
	\end{definition}
	Here are some of its values $$\bold{t}(3)=-248,\,\bold{t}(4)=492,\, \bold{t}(7)=-4119, \bold{t}(8)=7256$$
	It looks like the trace is supported only for integers $n\equiv 0,3[4]$ and by considering its generating series $\sum_d \bold{t}(d)$, we can expect that it is a modular form of half-integral weight on $\Gamma_0(4)$ and using the definition of the Kohnen plus-space, we can guess that the weight has to be $3/2$. In its amazing work, Zagier succeeded in relating the generating series of the trace to a weight $3/2$ weakly holomorphic modular form living in the Kohnen plus-space. Here is the weight $3/2$ modular form he constructed $$g(\tau)\triangleq \theta_1(\tau)\frac{E_4(4\tau)}{\eta(4\tau)^6}=q^{-1}-2+248q^3-492q^4+4119q^7+O(q^8)$$ where $\theta_1(\tau)=\sum_{n\in\ZZ}(-1)^nq^{n^2}$, $E_4(\tau)=1+240\sum_{n\geq 1}\frac{n^3q^n}{1-q^n}$ and $\eta(\tau)=q^{1/24}\prod_{n\geq 1}(1-q^n)$. Here the idea is that in order to obtain a modular form in the Kohnen plus-space with alternating terms, one has an easy candidate which is $\theta_1(\tau)$, the twist of the usual $\theta(\tau)$ function. The quotient here ensures that our form will start in $q^-1$. From here, it is possible to prove (and it will be done later) that such a form of weight $3/2$ is uniquely determined by its first and second non-zero Fourier coefficients, namely the leading term $q^{-1}$ and the constant term $-2$. By looking at the Fourier coefficients of $g(\tau)$ and the values of $\bold{t}(d)$, we see that they are the same up to a minus sign
	\begin{theorem}
		Write $g(\tau)=\sum_{d\geq -1}B(d)q^d$. Then $$\bold{t}(d)=-B(d),\,\forall d>0$$
		With this result, the task of fully describing the trace function is done. In 2008 for its PhD thesis, Paul M. Jenkins provided exact formulas for the trace using Maass-Poincaré series. He also derived nice p-adic properties of traces, congruences and a criteria for $p$-divisibility of class numbers of imaginary quadratic fields in terms of p-divisibility of traces of singular moduli.
	\end{theorem}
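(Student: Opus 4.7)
The plan is to combine Borcherds' product theorem (Theorem 2) with a duality between the Kohnen plus-spaces of weights $1/2$ and $3/2$, together with a uniqueness argument.

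First I would verify that $g(\tau)$ lies in $M_{3/2}^{!,+}(\Gamma_0(4))$: the weight count gives $\tfrac{1}{2}+4-3=\tfrac{3}{2}$, the level $\Gamma_0(4)$ is clear from each factor, and the plus-space support on exponents $d\equiv 0,3\pmod 4$ follows because $E_4(4\tau)/\eta(4\tau)^6=q^{-1}\cdot(\text{series in }q^4)$ has support $d\equiv 3\pmod 4$ while $\theta_1$ is supported on squares $n^2\equiv 0,1\pmod 4$. I would then establish the uniqueness statement noted just above the theorem: any element of $M_{3/2}^{!,+}(\Gamma_0(4))$ is determined by its principal part together with its constant term, via the Kohnen--Shimura isomorphism $S_{3/2}^{+}(\Gamma_0(4))\cong S_2(SL_2(\ZZ))=0$ and the one-dimensionality of $M_{3/2}^{+}(\Gamma_0(4))$, spanned by a theta series with nonzero constant term.

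Next, for each $d>0$ with $-d$ a discriminant, I would apply Borcherds' theorem to construct the unique $f_d\in M_{1/2}^{!,+}(\Gamma_0(4))$ with principal part $q^{-d}$ and vanishing constant term $A_d(0)=0$; then $\psi(f_d)$ is a weight-$0$ modular function on $\Gamma$. Its Heegner divisor, computed from Theorem 2, combined with $X_0(1)\cong\mathbb{P}^1$, forces
$$\psi(f_d)(\tau)\,=\,C\prod_{Q\in \Gamma\backslash Q_d}\bigl(j(\tau)-j(\alpha_Q)\bigr)^{1/w_Q}$$
for some constant $C$. Expanding $\log\psi(f_d)$ two ways---via the Borcherds product as $-h_d\log q - A_d(1)\,q + O(q^2)$, and via the explicit product using $j(\tau)=q^{-1}+744+O(q)$ as $-H(d)\log q+\bigl(744H(d)-\sum_Q j(\alpha_Q)/w_Q\bigr)q+O(q^2)$---and equating the coefficient of $q$ yields $A_d(1)=\mathbf{t}(d)$.

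Finally I would invoke Zagier's duality: for $f=\sum A(n)q^n\in M_{1/2}^{!,+}(\Gamma_0(4))$ and $h=\sum B(n)q^n\in M_{3/2}^{!,+}(\Gamma_0(4))$, one has $\sum_n A(n)B(-n)=0$, proved by showing that the constant term of the weight-$2$ weakly holomorphic product $fh\in M_2^!(\Gamma_0(4))$ vanishes via a global residue computation on $X_0(4)$ (the plus-space restrictions on $f$ and $h$ are needed to kill contributions from cusps other than $\infty$). Applied to $f=f_d$ and $h=g$ with principal part $q^{-1}-2$, only two terms of the duality sum survive, giving $A_d(-d)B(d)+A_d(1)B(-1)=B(d)+A_d(1)=0$, hence $A_d(1)=-B(d)$. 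Combining with $A_d(1)=\mathbf{t}(d)$ yields $\mathbf{t}(d)=-B(d)$ as claimed. The main obstacle will be carefully tracking the Hurwitz weights $1/w_Q$ at the special CM points (equivalent to $i$ or $\rho$) in the Borcherds divisor-to-Fourier translation, and rigorously establishing Zagier duality via residue theory on $X_0(4)$.
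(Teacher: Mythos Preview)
Your proposal is correct and matches the paper's \emph{second} proof, which the paper sketches later (Theorem~6, Corollary~2, and the Kaneko duality argument). However, the proof given immediately after the theorem statement is a different one: it proceeds by showing that both $-B(d)$ and $\mathbf{t}(d)$ satisfy the same pair of recursions
\[
B(4n-1)=240\sigma_3(n)-\sum_{2\le r\le\sqrt{4n+1}}r^2B(4n-r^2),\qquad
B(4n)=-2\sum_{1\le r\le\sqrt{4n+1}}B(4n-r^2),
\]
via an explicit residue comparison of the meromorphic weight-$2$ form
\(
S_n(\tau)=\tfrac{E_4E_6}{\Delta}\sum_{M\in\Gamma\backslash\mathcal{M}_n}\tfrac{E_4|M}{j-j(M\tau)}
\)
against $\tfrac12\sum_{r^2<4n}(n-r^2)\Lambda_{4n-r^2}$. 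This approach is self-contained (no Borcherds input) but computationally heavier; your Borcherds--duality route is shorter and more conceptual, at the price of importing a deep theorem as a black box.

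Two small points. First, your uniqueness step overstates things: by Kohnen's isomorphism $M_{3/2}^+(\Gamma_0(4))\cong M_2(\Gamma)=0$, so $M_{3/2}^+$ is zero-dimensional (not one-dimensional spanned by a theta series); this only makes your argument easier, since the principal part alone already pins down $g$. Second, your duality mechanism differs slightly from the paper's: you propose a residue argument on $X_0(4)$ using the plus-space conditions to kill the other cusps, whereas the paper applies $U_4$ to $f_dg_D$ to land in $M_2^!(\Gamma)$, which is generated by $j'$ and hence has vanishing constant term. Both are valid; the $U_4$ trick is perhaps cleaner since it avoids any cusp-by-cusp analysis.
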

	There are two proofs of this result given in \cite{Za}. The first one that we will present now mostly in details is the longest and relies on recurence relations. The second one uses Borcherds theorem as a black box and a duality between weight $1/2$ and weight $3/2$ modular forms.
	\begin{proof}
		The idea is to find relations that the Fourier coefficients of $g(\tau)=\sum_n B(n)q^n$ must satisfy and see if the trace satisfies them too. These relations are recursions and determine uniquely the function whose Fourier coefficients satisfy them. They are given explicitely by $$B(4n-1)=240\sigma_3(n)-\sum_{2\leq r\leq \sqrt{4n+1}}r^2 B(4n-r^2),\,B(4n)=-2\sum_{1\leq r\leq \sqrt{4n+1}}B(4n-r^2)$$
		To study if the trace function satisfies these relations, we define the modified Hilbert class polynomial $\mathcal{H}_d$ $\forall d>0,\, d\equiv 0,3[4]$ as $$\mathcal{H}_{d}(X)=\prod_{Q\in \Gamma\backslash Q_d}(X-j(\alpha_Q))^{1/w_Q}$$ This function is $X^{1/3}$ times a polynomial in $X$ if $d/3$ is a square, $(X-1728)^{1/2}$ times a polynomial in $X$ if $d$ is a square and a polynomial in $X$ otherwise. After a little bit of algebra, we get $$\calH_d(j(\tau))=\prod_{Q\in\Gamma\backslash Q_d}(q^{-1}-J(\alpha_Q)+O(q))^{1/w_Q}=q^{-H(d)}(1-Tr(d)q+O(q^2))$$ Also, we define $$\Lambda_d(\tau)=\frac{-1}{2\pi i}\frac{d}{d\tau}log(\calH_d(j(\tau)))=\frac{-1}{2\pi i}\frac{dj(\tau)}{d\tau}\sum_{Q\in \Gamma\backslash Q_d} \frac{1}{w_Q}\frac{1}{j(\tau)-j(\alpha_Q)}$$ $$=\frac{E_4^2(\tau)E_6(\tau)}{\Delta(\tau)}\sum_{Q\in \Gamma\backslash Q_d} \frac{1}{w_Q}\frac{1}{j(\tau)-j(\alpha_Q)}=H(d)+Tr(d)q+O(q^2)$$ since $j=E_4^3/\Delta$ and thus $-j'(\tau)=-\frac{1}{2\pi i}\frac{dj(\tau)}{d\tau}=E^2_4E_6/\Delta$. Hence, $-2\pi i\Lambda_d(\tau)$ is a meromorphic modular form of weight $2$, holomorphic at infinity and having simple pole of residue $1/w_Q$ at each $\alpha \in\HH$ satisfying a quadratic equation over $\ZZ$ with discriminant $-d$. Such a form is uniquely characerized by these properties since there are no holomorphic modular forms of weight $2$ on $\Gamma$.\\
		\begin{proposition}
			$$\frac{E_4(\tau)E_6(\tau)}{\Delta(\tau)}\sum_{M\in \Gamma\backslash\mathcal{M}_n}\frac{(E_4|M)(\tau)}{j(\tau)-j(M\tau)}=\frac{1}{2}\sum_{r^2<4n}(n-r^2)\Lambda_{4n-r^2}(\tau)$$ where for $M\in \mathcal{M}_n$ we let $(E_4|M)=\frac{n^3}{(c\tau+d)^4}E_4(\frac{a\tau+b}{c\tau+d})$ so $E_4|(\gamma M)=E_4|M$.
		\end{proposition}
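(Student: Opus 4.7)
The plan is to show that both sides are weight-$2$ meromorphic modular forms on $\Gamma$ that are holomorphic at the cusp $\infty$, and then to verify that their polar divisors and residues coincide. Since $M_2(\Gamma)=\{0\}$, this forces equality.

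To verify the modular properties of the LHS, I would note that $L(\tau):=E_4E_6/\Delta$ has weight $-2$, while the inner sum $\Sigma(\tau):=\sum_{M\in\Gamma\backslash\mathcal{M}_n}(E_4|M)(\tau)/(j(\tau)-j(M\tau))$ has weight $4$: the identity $(E_4|M)(\gamma\tau)=(c_\gamma\tau+d_\gamma)^4(E_4|M\gamma)(\tau)$ combined with the fact that $M\mapsto M\gamma$ permutes representatives of $\Gamma\backslash\mathcal{M}_n$ gives the weight-$4$ transformation. Hence $L\cdot\Sigma$ has weight $2$, matching the weight of each $\Lambda_d$ on the RHS. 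For holomorphy at $\infty$: taking $M=\m{a&b\\0&d}$ (with $a\neq d$ since $n$ is assumed non-square), the denominator $j(\tau)-j(M\tau)$ has leading $q$-order $-\max(1,a/d)$ while $(E_4|M)$ stays bounded as $\tau\to i\infty$, so each summand vanishes at $\infty$ to sufficient order to cancel the $q^{-1}$ pole of $L$. The RHS is holomorphic at $\infty$ since each $\Lambda_d=H(d)+O(q)$ is.

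Next I would identify and match the poles. A pole of the LHS at $\tau_0\in\HH$ occurs iff $j(M\tau_0)=j(\tau_0)$ for some $M$, iff $A:=\gamma^{-1}M$ fixes $\tau_0$ for some $\gamma\in\Gamma$. Such a non-scalar $A$ of determinant $n$ forces $\tau_0$ to be a CM point; writing its primitive form as $[a_0,b_0,c_0]$ with $b_0^2-4a_0c_0=-D$, the admissible $A$'s are $A_{r,y}=\m{(r-yb_0)/2 & -yc_0 \\ ya_0 & (r+yb_0)/2}$ indexed by $(r,y)\in\ZZ^2$ with $r^2+Dy^2=4n$ (and the required parity $r\equiv yb_0\pmod 2$ is automatic since $D\equiv 0,3\pmod 4$). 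This matches the RHS pole condition: $\Lambda_{4n-r^2}$ has a pole at $\tau_0$ precisely when $Dy^2=4n-r^2$ for some integer $y$. To match residues, I would use $A_{r,y}\m{\tau_0\\1}=(c_A\tau_0+d_A)\m{\tau_0\\1}$ with eigenvalue $c_A\tau_0+d_A=(r+iy\sqrt{D})/2$ to get $A'(\tau_0)=n/(c_A\tau_0+d_A)^2$ and $1-A'(\tau_0)=iy\sqrt{D}/(c_A\tau_0+d_A)$; combined with $(E_4|M)(\tau_0)=n^3E_4(\tau_0)/(c_A\tau_0+d_A)^4$ and $j'(\tau_0)=-2\pi i(E_4^2E_6/\Delta)(\tau_0)$, the single-orbit residue at $\tau_0$ (after multiplying by $L(\tau_0)$) simplifies to $n^3/[2\pi y\sqrt{D}(c_A\tau_0+d_A)^3]$. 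Summing over the pair of $\Gamma$-orbits indexed by traces $\pm r$, and using the identity $3r^2-Dy^2=-4(n-r^2)$ from $r^2+Dy^2=4n$, yields total residue $i(n-r^2)/(2\pi)$. On the RHS, $\Lambda_{4n-r^2}$ has residue $-1/(2\pi iw_Q)$ at $\tau_0$; combining with the coefficient $(n-r^2)/2$ and summing over both $r$ and $-r$ in $\ZZ$ gives exactly $i(n-r^2)/(2\pi w_Q)$, matching the LHS when $w_Q=1$ (generic case).

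The main obstacle is the residue bookkeeping: correctly counting $\Gamma$-orbits of $A_{r,y}\in\operatorname{End}(\tau_0)\cap\mathcal{M}_n$ modulo the stabilizer of $\tau_0$, handling the extra factors $w_Q\in\{2,3\}$ at the special CM points $i$ and $\rho$ where additional orbit identifications occur, and verifying that all the complex phases organize into the real coefficient $(n-r^2)/2$ on the RHS. The strict inequality $r^2<4n$ conveniently excludes the scalar case $A=\sqrt{n}\,I$ that would otherwise arise when $n$ is a perfect square.
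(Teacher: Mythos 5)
Your proposal follows the same route as the paper's proof: both sides are weight-$2$ meromorphic modular forms with simple poles and holomorphic at $\infty$, so it suffices to match residues, which you do via the eigenvalue $\lambda=c\alpha+d$ satisfying $\lambda\overline{\lambda}=n$, $\lambda+\overline{\lambda}=r$, and the identity $\lambda^2+\lambda\overline{\lambda}+\overline{\lambda}^2=r^2-n$ --- exactly the computation carried out in the paper. Your additional details (the weight-$4$ transformation of the inner sum, the explicit $(r,y)$ parametrization with $r^2+Dy^2=4n$, and the $w_Q$ bookkeeping you flag as the remaining obstacle) merely elaborate steps the paper asserts without proof, so this is the same argument rather than a different one.
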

		\begin{proof}
			We note that both sides are meromorphic modular forms of weight 2 with simple poles and holomorphic at $\infty$. To show equality, it suffices to prove that they have the same residues. Let $M\alpha=\alpha$ and $\lambda=c\alpha+d$ where $c,d$ are the third and fourth entry of $M$. The residue of the LHS at $\tau=\alpha$ is $$\frac{E_4(\alpha)E_6(\alpha)}{\Delta(\alpha)}\frac{E_4|M(\alpha)}{j'(\alpha)-(n/\lambda^2)j'(M\alpha)}=\frac{E_4(\alpha)E_6(\alpha)}{\Delta(\alpha)}\frac{(n^3/\lambda^4)E_4(\alpha)}{(1-n/\lambda^2)j'(\alpha)}$$ $$=\frac{-1}{2\pi i}\frac{n^3/\lambda^4}{1-n/\lambda^2}=\frac{-1}{2\pi i}\frac{\overline{\lambda}^3}{\lambda-\overline{\lambda}}$$ since $n=\lambda\overline{\lambda}, M(\alpha,1)^t=\lambda(\alpha,1)^t,\lambda+\overline{\lambda}=r=\bold{t}(M)$. Therefore we obtain $$\frac{-1}{2\pi i}\sum_{r^2<4n}\sum_{\alpha\in \Gamma\backslash Q_{d=r^2-4n}}\frac{-\overline{\lambda}^3}{\lambda-\overline{\lambda}}=\frac{1}{4\pi i}\sum_{r^2<4n}\sum_{\alpha\in \Gamma\backslash Q_{4n-r^2}} \frac{\lambda^3-\overline{\lambda}^3}{\lambda-\overline{\lambda}}$$ $$=\frac{1}{4\pi i}\sum_{r^2<4n}\sum_{\alpha\in \Gamma\backslash Q_{4n-r^2}}  (\lambda^2+\lambda\overline{\lambda}+\overline{\lambda}^2)=\frac{1}{4\pi i}\sum_{r^2<4n}\sum_{\alpha\in \Gamma\backslash Q_{4n-r^2}}(\lambda+\overline{\lambda})^2-\lambda\overline{\lambda}$$ $$=\frac{1}{4\pi i}\sum_{r^2<4n}\sum_{\alpha\in \Gamma\backslash Q_{4n-r^2}}r^2-n$$ but this is exactly the sum of residues of the right hand side.
		\end{proof}
		Let $$S_n(\tau)\triangleq \frac{E_4(\tau)E_6(\tau)}{\Delta(\tau)}\sum_{M\in \Gamma\backslash\mathcal{M}_n}\frac{(E_4|M)(\tau)}{j(\tau)-j(M\tau)}$$ We write its $q$-expansion at infinity as $$S_n(\tau)=C_o+C_1q+O(q^2)$$ Note that we do not have negative powers of $q$ since our function is holomorphic at $\infty$. Since  $$\Lambda_{4n-r^2}(\tau)=H(4n-r^2)+\bold{t}(4n-r^2)q+O(q^2)$$ we have by the above proposition  $$S_n(\tau)=\frac{1}{2}\sum_{r^2<4n}(n-r^2)\Lambda_{4n-r^2}(\tau)=\frac{1}{2}\sum_{r^2<4n}(n-r^2)\big(H(4n-r^2)+\bold{t}(4n-r^2)q+O(q^2)\big)$$ $$=\frac{1}{2}\sum_{r^2<4n}(n-r^2)H(4n-r^2)+(n-r^2)\bold{t}(4n-r^2)q+O(q^2)$$ and finally $$C_0=\frac{1}{2}\sum_{r^2<4n}(n-r^2)H(4n-r^2)\text{ and }C_1=\frac{1}{2}\sum_{r^2<4n}(n-r^2)\bold{t}(4n-r^2)$$ Now we use another representation of $S_n(\tau)$, the one given by taking a set of representatives for $\Gamma\backslash \mathcal{M}_n$, namely $\mathcal{M}_n^*=\{\m{a&b\\0&d}:ad=n,0\leq b<d\}$. We obtain $$S_n(\tau)= \frac{E_4(\tau)E_6(\tau)}{\Delta(\tau)}\sum_{M\in \Gamma\backslash\mathcal{M}_n}\frac{(E_4|M)(\tau)}{j(\tau)-j(M\tau)}=\frac{E_4(\tau)E_6(\tau)}{\Delta(\tau)}\sum_{M\in \mathcal{M}_n^*}\frac{(E_4|M)(\tau)}{j(\tau)-j(M\tau)}$$ $$=\sum_{ad=n,d>0}a^3\Big(\frac{E_4(\tau)E_6(\tau)}{\Delta(\tau)}\frac{1}{d}\sum_{b[d]}\frac{E_4(\frac{a\tau+b}{d})}{j(\tau)-j(\frac{a\tau+b}{d})}\Big)$$ and if we define $$S_{a,d}(\tau)\triangleq \frac{E_4(\tau)E_6(\tau)}{\Delta(\tau)}\frac{1}{d}\sum_{b[d]}\frac{E_4(\frac{a\tau+b}{d})}{j(\tau)-j(\frac{a\tau+b}{d})}$$ we get $$S_n(\tau)=\sum_{ad=n,d>0}a^3S_{a,d}(\tau)$$
		Computing the $q$-expansion of $S_{a,d}$ we get  
		$$\frac{(1+240q+...)(1-504q+...)}{q(1-24q+...)}\frac{1}{d}\sum_{b[d]}\frac{1+240\sum_{l=1}^\infty\sigma_3(l)\zeta_d^{bl}q^{al/d}}{q^{-1}-\zeta_d^{-b}q^{-a/d}+O(q^>0)}$$ $$=(1-240q+O(q^2))\frac{1}{d}\sum_{b[d]}\frac{1+240\sum_{l=1}^\infty\sigma_3(l)\zeta_d^{bl}q^{al/d}}{1-\zeta_d^{-b}q^{1-a/d}+O(q^{>1})}$$ If $a<d$, we will transform $$\frac{1}{1-(\zeta_d^{-b}q^{1-a/d}+O(q^{>1}))}$$ by making use of geometric series, namely $$=(1-240q+O(q^2))\frac{1}{d}\sum_{b[d]}\frac{1+240\sum_{l=1}^\infty\sigma_3(l)\zeta_d^{bl}q^{al/d}}{1-\zeta_d^{-b}q^{1-a/d}+O(q^{>1})}$$ $$=(1-240q+O(q^2))\frac{1}{d}\sum_{b[d]}\Big ((1+240\sum_{l=1}^\infty\sigma_3(l)\zeta_d^{bl}q^{al/d})(\sum_{m=0}^\infty \zeta_d^{-bm}q^{m(1-a/d)}+O(q^{>1}))\Big)$$ $$=(1-240q+O(q^2))\frac{1}{d}\sum_{b[d]}\Big (\sum_{m=0}^\infty \zeta_d^{-bm}q^{m(1-a/d)}+O(q^{>1})\Big)$$ $$+\sum_{l=1}^\infty\sum_{m=0}^\infty\Big( 240\sigma_3(l)\zeta_d^{b(l-m)/d}q^{a(l-m)/d+m}+O(q^{>1}) \Big)$$ $$=(1-240q+O(q^2))\frac{1}{d}\sum_{b[d]}\Big(\sum_{l=1}^\infty \sum_{m=0}^\infty 240\sigma_3(l)\zeta_d^{b(l-m)/d}q^{a(l-m)/d+m}+O(q^{>1})\Big)$$ and we note that to avoid having nonintegral powers of $q$, we must have that $l\equiv m[d]$, otherwise we let the expression to be 0. This implies that $l-m$ is divisible by $d$ and that $(\zeta_d^{l-m})^{b/d}=1,\forall 0\leq b<d$. Hence, no term in the inner sums depend on $b$ and $\sum_{b[d]}1=d$ which now cancels the term $\frac{1}{d}$. We thus obtain $$=(1-240q+O(q^2))\sum_{l,m\geq 0,l\equiv m[d]}240\sigma_3(l)q^{a(l-m)/d+m}+O(q^2)$$ and finally, looking at the terms contributing to $q^0$ and $q^1$ we can rewrite the above as $$=1+(240\delta_{a,1}\sigma_3(n)+\delta_{a,d-1})q+O(q^2)$$ since the only pairs $(l,m)$ contributing are $(0,0),(1,1),(d,0)$ if $a=1$ and $(0,d)$ if $a=d-1$. \\
		For $a>d$, a similar calculation gives $$S_{a,d}(\tau)=\sum_{l,m\geq 0,l+m\equiv 0[d]}240\sigma_3(l)q^{-m+a(l-m)/d}(1-240q+O(q^2))$$ We can see that there is no constant term and thus we get $0+(-\delta_{a,d+1})q+O(q^2))$ since this time only the pair $(l,m)=(0,d)$ contributes. Summing we get $$C_0=\sum_{0<a<\sqrt{n},a|n}a^3\text{ and }C_1=240\sigma_3(n)-\left\{
		\begin{array}{ll}
		3n+1 & \text{if }4n+1 \text{ is a square} \\
		0, & \text{ otherwise }
		\end{array}
		\right.$$ with the last term coming from the factorization of $n$ as $ad$ with $a-d=\pm1$. This finishes the proof when $n$ is not a square.\\
		When $n$ is a square, everything goes exactly the same way except that we now have to compute the Fourier expansion of $S_{a,d}$ at $a=d$ too.\\
		In addition, by comparing the expressions obtained for $C_0$ we get the already well-known identity $$\sum_{r^2<4n}(n-r^2)H(4n-r^2)=\sum_{d|n}min(d,n/d)^3-\left\{
		\begin{array}{ll}
		n/2, & n \text{ is a square} \\
		0, & \text{ otherwise }
		\end{array}
		\right.$$ which together with $$\sum_{r^2<4n}H(4n-r^2)=\sum_{d|n}max(d,n/d)+\left\{
		\begin{array}{ll}
		1/6, & n \text{ is a square} \\
		0, & \text{ otherwise }
		\end{array}
		\right.$$ determines the Hurwitz-Kronecker class number $H(n)$ recursively.
	\end{proof}
	Now lets take a look at the proof involving Borcherds lift. As a consequence of Borcherds theorem, we have 
	\begin{theorem}
		For $d>0,d\equiv 0,3[4]$, $$\mathcal{H}_d(j(\tau))=q^{-H(d)}\prod_{n=1}^\infty(1-q^n)^{A(n^2,d)}$$ where $A(n^2,d)$ is the coefficient of a weight $1/2$ modular form $f_d\in M_{1/2}^{+,!}(\Gamma_0(4))$.
	\end{theorem}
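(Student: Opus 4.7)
The plan is to realize $\mathcal{H}_d(j(\tau))$ as the Borcherds lift of a single, well-chosen element of $M_{1/2}^{+,!}(\Gamma_0(4))$. Concretely, for each $d>0$ with $d\equiv 0,3\pmod 4$ I would first exhibit a weakly holomorphic weight $1/2$ form
$$f_d(\tau)=q^{-d}+\sum_{n\geq 1}A(n,d)\,q^n\in M_{1/2}^{+,!}(\Gamma_0(4))$$
whose principal part at the cusp $\infty$ is exactly $q^{-d}$ and whose constant term $A(0,d)$ vanishes. Existence and uniqueness of such an $f_d$ is the standard ``canonical basis'' fact for the Kohnen plus-space, proved by starting from any spanning set and performing row reduction on principal parts; one checks the plus-space condition $A(n,d)=0$ unless $-n\equiv 0,1\pmod 4$ is compatible with fixing only a single negative Fourier coefficient.

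Once $f_d$ is in hand, I would feed it into the isomorphism $\psi$ of Borcherds' theorem. Since $A(0,d)=0$ the lift $\psi(f_d)$ is a weight $0$ modular function on $\Gamma$, hence a rational function of $j(\tau)$ with integer coefficients and leading coefficient $1$. The order $h$ of its pole at $\infty$ is, by the statement of Borcherds' theorem, the constant term of $f_d(\tau)\,\xi(\tau)$; expanding $\xi(\tau)=-\tfrac{1}{12}+\sum_{n>0}H(n)q^n$ against $f_d(\tau)=q^{-d}+O(q)$ (the $O(q)$ coming from nonnegative powers only, since no other principal part term is present), the only surviving contribution is $q^{-d}\cdot H(d)q^d$, giving $h=H(d)$. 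This already matches the leading $q^{-H(d)}$ in the claimed formula.

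It remains to match divisors in $\HH$. By Borcherds, $\psi(f_d)$ has a zero of multiplicity $\sum_{k>0}A(-Dk^2,d)$ at each Heegner point of discriminant $-D<0$; because $f_d$'s only negative Fourier coefficient is $A(-d,d)=1$, this multiplicity equals $1$ precisely when $Dk^2=d$ for some $k\geq 1$, and is $0$ otherwise. On the other hand the divisor of $\mathcal{H}_d(j(\tau))$ on the compactified orbit space $\Gamma\backslash\HH^*$ is $\sum_{Q\in\Gamma\backslash Q_d}\frac{1}{w_Q}[\alpha_Q]-H(d)[\infty]$, and the points $\alpha_Q$ for non-primitive $Q=k\cdot Q_0\in Q_d$ are exactly the Heegner points of discriminant $-d/k^2$. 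So the two divisors coincide, and the equality $\psi(f_d)=\mathcal{H}_d(j(\tau))$ follows from the classical fact that a modular function on $\Gamma$ is determined by its divisor and leading Fourier coefficient.

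I expect the main obstacle to be the careful reconciliation of the two conventions for multiplicity at Heegner points: Borcherds' formula counts multiplicities on the orbifold $\Gamma\backslash\HH$, whereas the factor $1/w_Q$ in the definition of $\mathcal{H}_d$ encodes the stabilizer of $\alpha_Q$ in $\Gamma$. At the elliptic points $i$ and $\rho$ (corresponding to $d=4$ and $d=3$) this bookkeeping must be done with particular care, since the ``fractional'' exponents $1/2$ and $1/3$ in $\mathcal{H}_d$ have to line up exactly with the orbifold orders of vanishing produced by $\psi$. Beyond this, the only nontrivial input is the existence of $f_d$, which if desired can also be produced explicitly from Maass–Poincaré series or from the weight $1/2$ duals of Zagier's weight $3/2$ basis.
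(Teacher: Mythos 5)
Your argument is correct in outline, but it takes a genuinely different route from the proof the paper actually writes down. You treat Borcherds' isomorphism $\psi$ as a black box: lift $f_d$, read off the pole order $H(d)$ from the constant term of $f_d\xi$, match the Heegner divisor of $\psi(f_d)$ against that of $\mathcal{H}_d(j(\tau))$, and conclude by the rigidity of modular functions with prescribed divisor and leading coefficient. The paper instead gives an essentially self-contained derivation: it first proves $\mathcal{H}_d(j(\tau))=q^{-H(d)}\exp\bigl(-\sum_{m\geq 1}Tr_m(d)q^m/m\bigr)$ from the identity $j(\tau)-j(z)=q^{-1}\exp\bigl(-\sum_m J_m(z)q^m/m\bigr)$, then feeds in the generalized trace identity $Tr_m(d)=-B_m(1,d)$ together with the duality $A_m(D,d)=-B_m(D,d)$ and $A_m(1,d)=\sum_{n\mid m}nA(n^2,d)$, and finally performs the Lambert-series rearrangement $\exp\bigl(\sum_n A(n^2,d)\log(1-q^n)\bigr)=\prod_n(1-q^n)^{A(n^2,d)}$; in effect it \emph{re-proves} the Borcherds product in this special case rather than quoting it (which is also why that route generalizes in Section II to $\Gamma_0(p)^*$, where the trace identities are reproved from Jacobi forms). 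Your route is shorter and matches the paper's framing of the theorem ``as a consequence of Borcherds theorem,'' at the cost of the orbifold bookkeeping you rightly flag: Borcherds' multiplicity $\sum_{k>0}A(Dk^2,d)=1$ must be identified with the order of vanishing of $(j(\tau)-j(\alpha_Q))^{1/w_Q}$ as a function of $\tau\in\HH$ at the elliptic points, and since $\mathcal{H}_d$ is only a modular function after raising to a suitable power when $d$ or $d/3$ is a square, the ``determined by its divisor'' step should formally be applied to such a power before extracting the root. Two small corrections: the plus-space support condition in weight $1/2$ is $A(n,d)=0$ unless $n\equiv 0,1\ [4]$ (the principal part exponent $-d$ then forces $d\equiv 0,3\ [4]$), not $-n\equiv 0,1\ [4]$; and the vanishing of the constant term $A(0,d)$ is something you must arrange in the construction (by subtracting a multiple of $f_0=\theta$), not an automatic feature.
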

	An immediate corollary of this result is 
	\begin{corollary}
		$\bold{t}(d)=-B(d)=A(1,d),\forall d>0$.
	\end{corollary}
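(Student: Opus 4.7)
The plan is to deduce the corollary from a short $q$-expansion comparison of $\mathcal{H}_d(j(\tau))$. From the definition of $\mathcal{H}_d$ together with the expansion $j(\tau) = q^{-1} + 744 + O(q)$, one reads off
$$\mathcal{H}_d(j(\tau)) = q^{-H(d)}\bigl(1 - \bold{t}(d)\, q + O(q^2)\bigr),$$
exactly as recorded in the first proof of the trace formula via recurrences. The preceding Borcherds-type theorem supplies the alternative factorization
$$\mathcal{H}_d(j(\tau)) = q^{-H(d)} \prod_{n=1}^\infty (1 - q^n)^{A(n^2,d)}.$$
Expanding the infinite product to first order in $q$, only the factor $n=1$ contributes to the linear coefficient, so $\prod_{n \geq 1}(1 - q^n)^{A(n^2,d)} = 1 - A(1,d)\, q + O(q^2)$. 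Matching coefficients of $q$ in the two expansions forces $\bold{t}(d) = A(1,d)$. Chaining this with the already established identity $\bold{t}(d) = -B(d)$ from the trace theorem yields the full corollary.

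A more self-contained route, in the true spirit of the Borcherds proof, bypasses the recurrence-based theorem entirely and derives $-B(d) = A(1,d)$ directly from Zagier's duality between the Kohnen plus-spaces $M_{1/2}^{+,!}(\Gamma_0(4))$ and $M_{3/2}^{+,!}(\Gamma_0(4))$. One exhibits canonical bases $\{f_d\}$ and $\{g_D\}$ with prescribed pole orders at infinity,
$$f_d(\tau) = q^{-d} + \sum_{D > 0} A(D,d)\, q^D, \qquad g_D(\tau) = q^{-D} + \sum_{d > 0} B(d,D)\, q^d,$$
and establishes the pairing $A(D,d) = -B(d,D)$. Since the weight $3/2$ form $g$ of the excerpt is precisely $g_1$, one has $B(d) = B(d,1)$, and specializing to $D = 1$ recovers $A(1,d) = -B(d)$.

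The main obstacle in this alternative route is the duality itself. The standard argument forms the product $h(\tau) \coloneqq f_d(\tau)\, g_D(\tau)$, a weakly holomorphic modular form of weight $2$ on $\Gamma_0(4)$, and exploits the fact that the sum of the residues of its Fourier expansion at all cusps must vanish. Extracting the $q^0$ coefficient of $h$ from the product of the two explicit Fourier series then produces exactly $A(D,d) + B(d,D) = 0$, giving the duality, and with it a proof of both the corollary and of the trace formula $\bold{t}(d) = -B(d)$ that is entirely independent of the earlier recurrence argument.
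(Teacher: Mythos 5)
Your first route is precisely the paper's intended argument: the expansion $\mathcal{H}_d(j(\tau))=q^{-H(d)}\bigl(1-\mathbf{t}(d)q+O(q^2)\bigr)$ is already computed in the text (in the recurrence proof of the trace theorem), and comparing its linear coefficient with that of the Borcherds product $q^{-H(d)}\prod_{n\geq 1}(1-q^n)^{A(n^2,d)}=q^{-H(d)}\bigl(1-A(1,d)q+O(q^2)\bigr)$ gives $\mathbf{t}(d)=A(1,d)$; chaining with $\mathbf{t}(d)=-B(d)$ yields the corollary. This is exactly what the paper means by ``immediate,'' and your execution of it is correct.

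Your second, self-contained route is essentially the duality proof the paper gives a few paragraphs later (following Kaneko), but with one substantive difference in justification. You argue that the constant term of the weight-$2$ form $f_dg_D$ on $\Gamma_0(4)$ vanishes because ``the sum of the residues at all cusps must vanish,'' and then you extract only the $q^0$ coefficient at $\infty$. That is incomplete as stated: $X_0(4)$ has three cusps, and the residue theorem only tells you that the three constant terms sum to zero, so you still owe an argument that the contributions from the cusps $0$ and $1/2$ vanish (this is where the plus-space condition must be used, and it is not automatic). The paper sidesteps this by first applying $U_4$, which pushes $f_dg_D$ down to a weight-$2$ weakly holomorphic form on $SL_2(\ZZ)$ without changing the constant term at $\infty$; on the full modular group there is a single cusp, the form is a polynomial in $j'$, and the vanishing of the constant term is then genuinely immediate. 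So either supply the computation at the other two cusps, or adopt the $U_4$ reduction; with that repair your alternative route does give an independent proof of $A(1,d)=-B(d)$ and hence of the trace formula.
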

	This is the first mysterious appearance of the coefficient duality between weight $3/2$ and $1/2$ modular forms on $\Gamma_0(4)$. We might be interested in computing a basis for each of these space and see if we can explicitely show the Zagier duality. To investigate this path, we recall some facts about the theory of half-integral weight modular forms.
	\begin{definition}
		The space of half-integral weight holomorphic modular forms on $\Gamma_0(4)$ is denoted $M_{k+1/2}(\Gamma_0(4))$ and consists of those modular forms holomorphic in $\HH$ and at the cusps and which transform under the action of $\Gamma_0(4)$ like $\theta^{2k+1}$ where $\theta(\tau)=\sum_{\ZZ}q^{n^2}$.
	\end{definition}
	\begin{definition}
		A very interesting subspace of $M_{k+1/2}(\Gamma_0(4))$ is the Kohnen plus-space $M_{k+1/2}^+$. It consists of modular forms $f=\sum_n a_nq^n$ whose Fourier coefficients satisfy $a_n\neq 0\iff (-1)^kn\equiv 0,1 [4]$.
	\end{definition}
	There is an isomorphism that was discovered by Kohnen in \cite{Koh} that explicitly gives the restriction to finding modular forms in $M^+_{k+1/2}$
	\begin{theorem}
		$$M^+_{k+1/2}(\Gamma_0(4))\cong M_{2k}(\Gamma)$$ and it preserves the space of cusp forms. We thus obtain that $M^+_{k+1/2}(\Gamma_0(4))$ is one dimensional for $k=0$ (spanned by the above theta function)  and is 0 for $k=1$ or $k<0$. 
	\end{theorem}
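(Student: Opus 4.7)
The plan is to establish the isomorphism via the Shimura correspondence, refined by Kohnen to relate the plus-space to full level. First I would recall Shimura's original construction: for a squarefree positive integer $t$, associate to $f(\tau) = \sum_{n \geq 0} a(n) q^n \in M_{k+1/2}(\Gamma_0(4))$ the series
$$S_t(f)(\tau) = \sum_{n \geq 1}\left(\sum_{d \mid n}\left(\tfrac{(-1)^k t}{d}\right) d^{k-1}\, a\!\left(\tfrac{tn^2}{d^2}\right)\right) q^n,$$
and verify by a Weil converse-theorem argument (unfolding the appropriate Rankin--Selberg integral against an Eisenstein series) that $S_t(f)$ is a holomorphic modular form of weight $2k$ on $\Gamma_0(2)$, sending cusp forms to cusp forms.

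The heart of the argument is Kohnen's refinement to full level. I would first construct an averaging operator $\mathrm{pr}^+$ on $M_{k+1/2}(\Gamma_0(4))$ that projects onto the plus-space, built as a linear combination of slash actions by distinguished coset representatives of $\Gamma_0(16)$ in $\Gamma_0(4)$. Next, for $f \in M^+_{k+1/2}(\Gamma_0(4))$ and an appropriate base discriminant $t$, I would show that $S_t(f)$ is additionally invariant under the Atkin--Lehner involution $w_2$, hence in fact lies in $M_{2k}(\Gamma)$. To close the loop I would construct the inverse as a Shintani-type theta lift, sending $F \in M_{2k}(\Gamma)$ to a generating series whose $d$-th Fourier coefficient is assembled from cycle integrals of $F$ along geodesics attached to binary quadratic forms of discriminant $(-1)^k d$, and then verify that the composition is a nonzero constant multiple of the identity on both sides. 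The hardest step will be the plus-space descent: one must show that the vanishing of $a(n)$ for $(-1)^k n \not\equiv 0, 1 \pmod 4$ precisely kills the obstruction to $w_2$-invariance of $S_t(f)$, which comes down to a careful reorganisation of Gauss sums in the Shimura coefficient formula.

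The consequences in the last sentence of the statement then follow quickly. For $k = 0$ the isomorphism gives $M^+_{1/2}(\Gamma_0(4)) \cong M_0(\Gamma) = \CC$, and the theta function $\theta(\tau) = \sum_{n \in \ZZ} q^{n^2}$ has Fourier coefficients supported on squares $n^2 \equiv 0, 1 \pmod 4$, so $\theta \in M^+_{1/2}(\Gamma_0(4))$ is nonzero and must span the one-dimensional space. For $k = 1$ we use $M_2(\Gamma) = 0$ (there are no nonzero holomorphic weight-$2$ forms on $SL_2(\ZZ)$), and for $k < 0$ we use $M_{2k}(\Gamma) = 0$ trivially, so the corresponding plus-spaces vanish as well.
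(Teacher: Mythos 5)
The paper offers no proof of this statement at all: it is quoted as Kohnen's theorem with a citation to \cite{Koh}, so your sketch has to stand on its own rather than be measured against an argument in the text. As a reconstruction of Kohnen's isomorphism in the range $k\geq 2$ your outline follows the standard route (Shimura lift, plus-space projection, descent from $\Gamma_0(2)$ to level one, Shintani lift in the other direction), but it has a gap that sits exactly on the cases the theorem actually uses. The Shimura--Shintani machinery does not operate for $k\leq 1$: the Rankin--Selberg unfolding that shows $S_t(f)$ is a holomorphic form of weight $2k$ requires $2k\geq 4$, and there is no Shimura lift out of weight $1/2$ or out of negative half-integral weight at all. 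Yet the conclusions drawn in the last sentence concern precisely $k=0$, $k=1$ and $k<0$. Those need separate elementary inputs: the Serre--Stark theorem to get $M_{1/2}(\Gamma_0(4))=\CC\theta$ when $k=0$; the explicit fact that $M_{3/2}(\Gamma_0(4))$ is spanned by $\theta^3$, whose coefficient of $q^2$ is $12\neq 0$ while $2\not\equiv 0,3 \pmod 4$, so $\theta^3$ fails the plus-space condition and $M^+_{3/2}=0$; and a valence-formula argument to kill negative weight. None of these is supplied by your plan.

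Separately, the claim that the composite of the Shimura lift with a Shintani-type lift is ``a nonzero constant multiple of the identity on both sides'' is not correct as stated. On a Hecke eigenform $f$ the composite acts by a scalar proportional to $|a(|D|)|^2$ times a ratio of Petersson norms (this is the content of the Kohnen--Zagier formula), so the scalar depends on the eigenform, and its nonvanishing requires the additional lemma that some fundamental-discriminant coefficient of $f$ is nonzero; that lemma is true but not automatic. Kohnen's own proof sidesteps the issue: he computes $\dim S^+_{k+1/2}$ by taking the trace of the projection operator onto the plus space, matches it with $\dim S_{2k}$, and then concludes via Hecke-equivariance of the lift and multiplicity one. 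If you want to keep the invertible-map strategy you must either prove the nonvanishing lemma or replace that step by Kohnen's dimension count.
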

	Hence, we can see that in order to get non trivial spaces in these low half-integral weights, we have to allow our modular forms to have poles at cusps. Let's denote this new space of weakly holomorphic modular forms in the Kohnen plus-space by $M_{k+1/2}^{+,!}(\Gamma_0(4))$. An easy classification of the elements of $M_{k+1/2}^{+,!}$ is given by the following 
	\begin{proposition}
		$$f(\tau)\in M_{k+1/2}^{+,!}(\Gamma_0(4))\iff f(\tau)\Delta(4\tau)^n\in M_{k+12n+1/2}(\Gamma_0(4))$$ for some $n$.
	\end{proposition}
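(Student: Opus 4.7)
The plan is to treat $\Delta(4\tau)$ as a ``cusp-killer'' that is nonvanishing on $\HH$, so multiplication by a sufficiently high power of it trades weakly holomorphic behavior at the cusps for genuine holomorphy there. First I would record the two facts about $\Delta(4\tau)$ that drive the proof: (i) it lies in $S_{12}(\Gamma_0(4))$, since $\Delta \in S_{12}(\Gamma)$ and the substitution $\tau \mapsto 4\tau$ intertwines $\Gamma$ with $\Gamma_0(4)$; (ii) as $\Delta$ is nonvanishing on $\HH$, so is $\Delta(4\tau)$, while $\Delta(4\tau)$ has a zero of some positive order at each of the three cusps $\infty,\,0,\,1/2$ of $\Gamma_0(4)$ (the orders can be read off from the usual local parameters together with the fact that $\Delta$ has a simple zero at the cusp of $\Gamma$).

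For the forward direction ($\Rightarrow$), take $f \in M_{k+1/2}^{+,!}(\Gamma_0(4))$. By definition $f$ is holomorphic on $\HH$ and has at worst poles of finite order at the three cusps. Let $M$ be the maximum of these pole orders, and let $m$ be the minimum vanishing order of $\Delta(4\tau)$ at a cusp; then for any $n \geq M/m$ the product $f(\tau)\Delta(4\tau)^n$ is holomorphic at every cusp as well as on $\HH$. The transformation law checks out: $f$ transforms like $\theta^{2k+1}$ under $\Gamma_0(4)$ and $\Delta(4\tau)^n$ is a classical weight-$12n$ form on $\Gamma_0(4)$, so the product transforms like $\theta^{2k+1}$ times a weight-$12n$ factor, i.e.\ like $\theta^{2(k+12n)+1}$, which is precisely the transformation law for weight $k+12n+1/2$. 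Hence $f(\tau)\Delta(4\tau)^n \in M_{k+12n+1/2}(\Gamma_0(4))$.

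For the reverse direction ($\Leftarrow$), assume $g(\tau) := f(\tau)\Delta(4\tau)^n \in M_{k+12n+1/2}(\Gamma_0(4))$. Then $f(\tau) = g(\tau)/\Delta(4\tau)^n$ is well-defined and holomorphic on $\HH$ because $\Delta(4\tau)$ is nowhere zero there, and at each cusp $f$ has at worst a pole of order equal to $n$ times the vanishing order of $\Delta(4\tau)$ at that cusp. The transformation law of weight $k+1/2$ is inherited by dividing out the integer-weight $12n$ factor. Thus $f$ is weakly holomorphic of weight $k+1/2$ on $\Gamma_0(4)$; the plus-space condition on $f$ is inherited from the observation that $\Delta(4\tau)^n$ has Fourier expansion supported only on $q^l$ with $l \equiv 0 \pmod 4$, so convolving (or deconvolving) with it preserves the residue mod $4$ of the exponents, and the parity $(-1)^{k+12n} = (-1)^k$ matches between the two weights.

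The main obstacle, and the only genuinely non-formal step, is the bookkeeping at the cusps in the forward direction: one must actually know the vanishing orders of $\Delta(4\tau)$ at $0$ and $1/2$, not just at $\infty$, to be sure that the same $n$ kills poles at every cusp. This is a standard computation using the width of each cusp of $\Gamma_0(4)$ (widths $1,4,1$ at $\infty,0,1/2$), together with the invariance of the divisor degree of $\Delta(4\tau)$ as an element of $M_{12}(\Gamma_0(4))$. Everything else is formal once the two properties of $\Delta(4\tau)$ are in hand.
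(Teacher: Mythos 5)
The paper states this proposition with no proof at all, so there is nothing to compare against; your argument is the standard one and the forward direction is essentially complete. The two facts you isolate about $\Delta(4\tau)$ are the right ones, and your cusp bookkeeping is correct: the widths of $\Gamma_0(4)$ at $\infty,0,1/2$ are $1,4,1$, the vanishing orders of $\Delta(4\tau)$ in the corresponding local parameters are $4,1,1$ (summing to $6=12\cdot[\Gamma:\Gamma_0(4)]/12$ as required), so the minimum order is $1$ and a single $n$ at least the maximal pole order of $f$ clears all three cusps at once. The multiplication of transformation laws ($\theta^{2k+1}$ times weight $12n$ gives $\theta^{2(k+12n)+1}$) is also as it should be.

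The one point that needs to be made honest is the plus-space condition in the reverse direction. As literally printed, the right-hand side of the equivalence is $M_{k+12n+1/2}(\Gamma_0(4))$ with no plus, and under that reading the implication $\Leftarrow$ is \emph{false}: take any $g$ in that space which is not in the Kohnen plus-space; then $f=g/\Delta(4\tau)^n$ is weakly holomorphic of weight $k+1/2$ but not in $M^{+,!}_{k+1/2}$. Indeed, since $\Delta(4\tau)^{\pm n}$ is a (Laurent) series in $q^4$, multiplication or division by it preserves the Fourier support modulo $4$, so $f$ satisfies the plus-space condition exactly when $g$ does. Your sentence ``the plus-space condition on $f$ is inherited'' quietly assumes that $g$ lies in $M^{+}_{k+12n+1/2}(\Gamma_0(4))$; that is the reading under which the proposition is true (and your remark that $(-1)^{k+12n}=(-1)^{k}$ correctly shows the two plus-conditions agree), but it is not what the displayed statement says. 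You should either add the plus to the target space or state explicitly that you are proving the equivalence with $M^{+}_{k+12n+1/2}(\Gamma_0(4))$ on the right; with that amendment your proof is complete.
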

	Now we turn ourselves to the construction of an explicit basis in weight $1/2$ and $3/2$. Later we will look at groups of the form $\Gamma_0(4p)$ for $p|\#\MM$ where a more explicit method will be developed. 
	\begin{proposition}
		For every $d\equiv 0,3[4]$, there is a unique $f_d\in M^{!,+}_{1/2}(\Gamma_0(4))$ such that $$f_d(\tau)=q^{-d}+\sum_{D>0,D\equiv 0,1[4]}A(D,d)q^D$$ and the functions $f_0,f_3,f_4,f_7,...$ form a  basis of $M^{!,+}_{1/2}(\Gamma_0(4))$.
	\end{proposition}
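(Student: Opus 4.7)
The argument has three strands: uniqueness of the normalized form, existence, and the basis property; I sketch each in turn.

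\emph{Uniqueness.} Suppose $f_d,\tilde f_d\in M^{!,+}_{1/2}(\Gamma_0(4))$ both satisfy the stated normalization. Their difference $h$ has Fourier expansion of the form $\sum_{D>0,\,D\equiv 0,1\,(4)}c_D q^D$, so in particular $h$ is holomorphic at $\infty$ with vanishing constant term. The preceding proposition, applied with $n=0$, then places $h$ in the honest holomorphic space $M^+_{1/2}(\Gamma_0(4))$, which by Kohnen's isomorphism is identified with $M_0(\Gamma)=\CC\cdot\theta$. Since $\theta$ has nonzero constant term but $h$ does not, $h=0$.

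\emph{Existence.} Take $f_0=\theta$. For $d>0$ I would proceed by strong induction on $d$, assuming $f_{d'}$ has been constructed for every admissible $0\leq d'<d$. The task reduces to exhibiting a single nonzero $F\in M^{!,+}_{1/2}(\Gamma_0(4))$ whose pole at $\infty$ has order exactly $d$: subtracting from $F$ the unique linear combination of $f_0,f_3,\ldots,f_{d-1}$ (restricted to admissible indices) that kills the coefficients of $q^{-d'}$ for $0<d'<d$ and the constant term, and rescaling so the $q^{-d}$ coefficient is $1$, produces $f_d$. To produce $F$, I would invoke Borcherds' theorem: the modified Hilbert class polynomial $\mathcal{H}_d(j(\tau))$ (after multiplying by appropriate rational powers of $E_4,E_6$ to clear the fractional zero orders at the elliptic fixed points of $\Gamma$ when $d/3$ or $d$ is a square) is an integer-weight modular form on $\Gamma$ with Heegner divisor, integer coefficients, and leading coefficient $1$. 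Under the inverse Borcherds lift $\psi^{-1}$ this corresponds to a weakly holomorphic plus-space form of weight $1/2$ whose principal part begins at $q^{-d}$; the match between pole orders is seen by computing the constant term of $f\cdot\xi$ directly and identifying it with $H(d)$.

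\emph{Basis.} The $f_d$ are linearly independent because they have distinct principal-part exponents $-d$. For spanning, let $f\in M^{!,+}_{1/2}(\Gamma_0(4))$ have principal part $\sum_{d\in S}b_d q^{-d}$ (a finite sum). Then $f-\sum_{d\in S}b_d f_d$ is holomorphic at $\infty$, and by exactly the argument used in the uniqueness step it must equal some scalar multiple $c\theta=cf_0$. Hence $f=cf_0+\sum_{d\in S}b_d f_d$ lies in the span of $\{f_d\}_{d\equiv 0,3\,(4)}$.

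\emph{Main obstacle.} Uniqueness and the basis property follow essentially formally from the preceding proposition combined with Kohnen's identification of $M^+_{1/2}(\Gamma_0(4))$ with $\CC\cdot\theta$. The delicate step is the existence of $F$ with prescribed leading pole. The Borcherds route above requires verifying that $\mathcal{H}_d(j(\tau))$ (after clearing fractional exponents) meets the hypotheses of Borcherds' theorem --- integrality of weight and of coefficients, Heegner-type divisor --- and that the principal part of the resulting weight-$1/2$ form matches $q^{-d}$ rather than some other exponent. An alternative, more elementary route is to work inside $M^+_{12n+1/2}(\Gamma_0(4))$ for large $n$ and to divide by $\Delta(4\tau)^n$, using that $\dim M^+_{12n+1/2}(\Gamma_0(4))=2n+1$ by Kohnen; this avoids Borcherds but requires a surjectivity statement for the ``leading plus-space exponent'' map which, while true, is not supplied by dimension counting alone and needs an explicit leading-term basis (for instance via products $\theta\cdot g$ with $g$ running over a basis of $M_{12n}(\Gamma)$).
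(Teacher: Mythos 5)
Your uniqueness and spanning arguments coincide with the paper's (which reduces the difference of two candidates to a cusp form in $M^+_{1/2}(\Gamma_0(4))$ and kills it via Kohnen's isomorphism $M^+_{k+1/2}(\Gamma_0(4))\cong M_{2k}(\Gamma)$; the paper does not even spell out the spanning step, so your explicit treatment of it is a useful addition). Where you genuinely diverge is existence, and there the paper's route is much lighter than either of yours. It constructs only two seed forms by hand, $f_0=\theta$ and $f_3=[\theta(\tau),E_{10}(4\tau)]/\Delta(4\tau)$ (normalized), and then inducts in steps of $4$: since $j(4\tau)=q^{-4}+O(1)$ has a Fourier expansion supported on exponents divisible by $4$, the product $f_{d-4}\cdot j(4\tau)$ stays in the plus-space and has leading term exactly $q^{-d}$, after which one subtracts multiples of $f_{d'}$, $0\le d'<d$, to clean up the principal part and constant term. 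This completely sidesteps the "main obstacle" you identify: you never need a surjectivity statement for the leading-exponent map, nor Borcherds' theorem, because multiplication by $j(4\tau)$ manufactures the required pole order for free once $f_0$ and $f_3$ exist. Your Borcherds route is workable in principle (and your dimension count $\dim M^+_{12n+1/2}(\Gamma_0(4))=2n+1$ is correct), but it is heavier, requires the fiddly verification you flag about clearing the fractional zero orders of $\mathcal{H}_d$ at elliptic points and inverting the triangular system relating the divisor data to the principal part, and it sits awkwardly in this paper's architecture: the paper later \emph{derives} the Borcherds product expansion of $\mathcal{H}_d(j(\tau))$ from the $f_d$ and $g_D$ via Zagier duality, so building the $f_d$ out of Borcherds' theorem would make that later argument circular as presented. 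If you want a self-contained existence proof, adopt the $j(4\tau)$-multiplication induction with the Rankin--Cohen seed.
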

	\begin{proof}
		For the uniqueness, suppose we have two such forms $f_d$ and $h_d$. Their difference is then $$f_d-h_d=\sum_{D>0,D\equiv 0,1 [4]}(A(D,d)_f-A(D,d)_h)q^D\in M_{1/2}^+(\Gamma_0(4))$$ Using Kohnen's isomorphism, we can see that $S_0=\{0\}$ and so $f_d-h_d=0\iff f_d=h_d$.\\
		For the existence, it suffices to construct them via the following algorithm. The idea is that we construct $f_0$ and $f_3$ by hand and the others by induction. $f_0$ is simply the $\theta$-function and $f_3$ can be constructed by use of the Rankin-Cohen bracket $$f_3=[\theta(\tau),E_{10}(4\tau)]/\Delta(4\tau)=q^{-3}-248q+26752q^4-...$$ To construct $f_d$ for $d\geq 4$, we take $f_{d-4}$ and multiply it by $j(4\tau)$ to get a form in the plus-space with leading coefficient $q^{-d}$. To kill the negative powers of $q$, we substract multiples of $f_{d'}$ for $0\leq d'<d$.
	\end{proof}
	The first basis elements are
	\begin{itemize}
		\item[1.]$f_0=1+2q+2q^4+2q^9+O(q^{16})$
		\item[2.]$f_3=q^{-3}-248q+26752q^4-85995q^5+O(q^8)$
		\item[3.]$f_4=q^{-4}+492q+143376q^4+565760q^5+O(q^8)$
		\item[4.]$f_7=q^{-7}-4119q+8288256q^4-52756480q^5+O(q^8)$
	\end{itemize}
	In a similar fashion we compute a basis for $M_{3/2}^{+,!}(\Gamma_0(4))$. 
	\begin{proposition}
		For every positive integer $D\equiv 0,1 [4]$, we define $g_D$ as the unique form in $M_{3/2}^{+,!}(\Gamma_0(4))$ with Fourier expansion of the form $$g_D(\tau)=q^{-D}+\sum_{d\geq 0, d\equiv 0,3[4]}B(D,d)q^d$$
	\end{proposition}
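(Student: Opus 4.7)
The plan is to mirror the argument given for the $\{f_d\}$ basis of $M_{1/2}^{+,!}(\Gamma_0(4))$, with the roles of weight and plus-space congruence swapped: since the weight-$3/2$ plus-space demands Fourier exponents $\equiv 0, 3 \pmod 4$, the prescribed leading exponent $-D$ forces $D \equiv 0, 1 \pmod 4$, matching the hypothesis.

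For uniqueness, I would take two candidates $g_D$ and $g'_D$ satisfying the stated expansion and form their difference. Since both prescribed principal parts are exactly $q^{-D}$, the difference has no negative powers of $q$ at $\infty$, and its Fourier expansion is supported on exponents $d \geq 0$ with $d \equiv 0, 3 \pmod 4$. Thus $g_D - g'_D$ lies in the holomorphic Kohnen plus-space $M_{3/2}^{+}(\Gamma_0(4))$. Kohnen's isomorphism identifies this with $M_2(\Gamma)$, which is trivial, so $g_D = g'_D$.

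For existence, I would proceed by induction on $D$, with two seed elements. Take $g_1$ to be Zagier's form $g(\tau) = \theta_1(\tau) E_4(4\tau)/\eta(4\tau)^6 = q^{-1} - 2 + O(q^3)$, already exhibited as an element of $M_{3/2}^{+,!}(\Gamma_0(4))$ earlier in the text. For $g_4$ I would construct an explicit weight-$3/2$ weakly holomorphic form with leading term $q^{-4}$ by a Rankin--Cohen bracket of $\theta_1$ against a suitable weight-$10$ form on $\Gamma_0(4)$ divided by $\Delta(4\tau)$, and then subtract $\CC$-multiples of $g_1$ to kill unwanted intermediate negative powers. For $D \geq 5$ with $D \equiv 0, 1 \pmod 4$, assuming all $g_{D'}$ with admissible $D' < D$ are already in hand, I would form
$$\tilde g_D(\tau) \,:=\, g_{D-4}(\tau)\, j(4\tau).$$
Because $j(4\tau) = q^{-4} + 744 + O(q^4)$ has $q$-expansion supported on exponents $\equiv 0 \pmod 4$, multiplication by it preserves the plus-space congruence, so $\tilde g_D \in M_{3/2}^{+,!}(\Gamma_0(4))$. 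Its principal part is $q^{-D}$ together with finitely many further negative powers $q^{-D'}$ with $0 < D' < D$ and $D' \equiv 0, 1 \pmod 4$; subtracting the unique $\CC$-linear combination of the previously built $g_{D'}$ that kills these intermediate poles produces $g_D$ with the required normalization.

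The hard part is producing the second seed $g_4$: with only $g_1$ in hand, iterating multiplication by $j(4\tau)$ reaches only leading exponents $-D \equiv -1 \pmod 4$, so the $D \equiv 0 \pmod 4$ branch must be bootstrapped independently. Verifying that the chosen explicit expression lies in $M_{3/2}^{+,!}(\Gamma_0(4))$ with principal part exactly $q^{-4}$ (after projection to the plus-space and after removing spurious lower-order negative terms via $g_1$) is the technical crux of the argument; once both seeds are exhibited, the inductive step and the uniqueness step are essentially formalities from Kohnen's isomorphism.
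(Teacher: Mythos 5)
Your proposal is correct and follows essentially the same route as the paper: uniqueness via Kohnen's isomorphism $M_{3/2}^{+}(\Gamma_0(4))\cong M_2(\Gamma)=0$, and existence by taking Zagier's $g$ as the seed $g_1$, building the second seed $g_4$ from a Rankin--Cohen bracket divided by $\Delta(4\tau)$ (the paper uses $[g_1,E_{10}]/\Delta(4\tau)$ rather than a bracket against $\theta_1$, a cosmetic difference), and then inducting by multiplying by $j(4\tau)$ and subtracting lower $g_{D'}$. Your remark that the two residue classes $D\equiv 0,1\ [4]$ each need their own seed, since multiplication by $j(4\tau)$ preserves the class of the leading exponent, is a correct and useful clarification of why the paper constructs both $g_1$ and $g_4$ by hand.
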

	\begin{proof}
		The proof for the uniqueness is exactly the same as above. The difference of two such functions lives in $M_{3/2}^+$ and since $M_2(\Gamma)$ is of dimension 0, we obtain uniqueness. For the existence, we define $g_1\triangleq g$ to be Zagier's weight $3/2$ modular form previously seen. For $g_4$ we again use the Rankin-Cohen bracket and get $$g_4=[g_1(\tau),E_{10}(\tau)]/\Delta(4\tau)$$ For higher forms $g_D$, $D>4$, we multiply $g_{D-4}$ by $j(4\tau)$ and substract multiples of $g_{D'}$ for $1\leq D'<D$. 
	\end{proof}
	\begin{example}
		The first basis elements are
		\begin{itemize}
			\item[1.] $g_1=q^{-1}-2+248q^3-492q^4+4119q^7+O(q^8)$
			\item[2.] $g_4=q^{-4}-2-26752q^3-143376q^4-8288256q^7+O(q^8)$
			\item[3.] $g_5=q^{-5}+0+85995q^3-565760q^4+52756480q^7+O(q^8)$
			\item[4.] $g_8=q^{-8}+0-1707264q^3-18473000q^4-5734772736q^7+O(q^8)$
		\end{itemize}
	\end{example}
	An interesting observation is that our bases $\{f_d\}_d$ and $\{g_D\}_D$ satisfy the relation $$A(1,d)=-B(1,d)$$ and the more general relation $$A(D,d)=-B(D,d)$$ for all $d,D$. This last equality is what we call the Zagier duality. We now give a proof of this statement that relies on ideas of Masanobu Kaneko
	\begin{proof}
		Let $f\in M_{1/2}^!(\Gamma_0(4))$, $g\in M_{3/2}^!(\Gamma_0(4))$. Their product $fg\in M_2^!(\Gamma_0(4))$ and by applying the operator $U_4$, we get $fg|_{U_4}\in M_2^!(\Gamma)$ (for more details, see theorem \ref{duality}). Any such function is a polynomial $P(j'(\tau))$ in the derivative of $j$. This implies that its constant term is 0. Hence by computing the constant term of $f_dg_D$ we get $$f_dg_D=q^{-d-D}+q^{-d}\sum_{d'\geq 0}B(D,d')q^{d'}+q^{-D}\sum_{D>0}A(D',d)q^{D'}+\sum_{d'\geq 0}B(D,d')q^{d'}\sum_{D'>0}A(D',d)q^{D'}$$ and thus  $$f_dg_D=B(D,d)+A(D,d)+h(q)$$ Now applying $U_4$ to $f_dg_D$ we obtain $$A(D,d)+B(D,d)=0$$
	\end{proof}
	Lets take a look now at the modified Hilbert polynomial $$\mathcal{H}_d(j(\tau))=\prod_{Q\in \Gamma\backslash Q_d}(j(\tau)-j(\alpha_Q))^{1/w_Q}$$ Finding an explicit $q$-product expansion for $\mathcal{H}_d(j(\tau))$ accounts to proving Borcherds theorem. We define a sequence of Faber polynomials in $j(\tau)$ as follow with the help of the following proposition 
	\begin{proposition}
		$\forall m\geq 0,\exists ! J_m(j(\tau))\in \ZZ[j], J_m\in M_0^!(\Gamma)$ such that $$J_m(j)=q^{-m}+O(q)$$
	\end{proposition}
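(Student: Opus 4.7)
The plan is to prove existence by an explicit inductive construction and uniqueness by dimension considerations, using the fact that every element of $M_0^!(\Gamma)$ is a polynomial in $j$.

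For the base case, take $J_0 = 1$, which trivially satisfies $J_0 = 1 + O(q) = q^0 + O(q)$ and lies in $\mathbb{Z}[j] \cap M_0^!(\Gamma)$. For the inductive step, I would assume that $J_0, J_1, \ldots, J_{m-1}$ have been constructed in $\mathbb{Z}[j]$ with the stated leading behavior. Since $j = q^{-1} + 744 + 196884 q + \cdots$ has an integer $q$-expansion, the $m$-th power expands as
\[
j(\tau)^m \;=\; q^{-m} + c_{m-1} q^{-m+1} + c_{m-2} q^{-m+2} + \cdots + c_0 + O(q),
\]
with $c_0, c_1, \ldots, c_{m-1} \in \mathbb{Z}$. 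Setting
\[
J_m(j(\tau)) \;\triangleq\; j(\tau)^m - c_{m-1} J_{m-1}(j(\tau)) - c_{m-2} J_{m-2}(j(\tau)) - \cdots - c_0 J_0(j(\tau))
\]
kills every negative power of $q$ above the leading $q^{-m}$ as well as the constant term, by the inductive normalization of the $J_k$. Since all $c_k$ are integers and each $J_k \in \mathbb{Z}[j]$, the result lies in $\mathbb{Z}[j]$, and being a polynomial in $j$ it automatically sits in $M_0^!(\Gamma)$.

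For uniqueness, suppose $J_m$ and $\widetilde{J}_m$ are two candidates. Their difference $D \triangleq J_m - \widetilde{J}_m$ belongs to $M_0^!(\Gamma)$ and has $q$-expansion beginning at $O(q)$, so in particular $D$ is holomorphic at the cusp and vanishes there. Because $D$ is a polynomial in $j$ and $j$ has a simple pole at $i\infty$, the order of the pole of any non-constant polynomial in $j$ at $\infty$ equals its degree in $j$; hence the only way for $D$ to be holomorphic at $\infty$ is for $D$ to be a constant, and the vanishing at $\infty$ then forces $D = 0$. This argument is the main (and only real) technical point, but it is immediate from the structure theorem that identifies $M_0^!(\Gamma)$ with $\mathbb{C}[j]$ (implicit in the earlier discussion that $\Gamma$-invariant holomorphic functions on $\mathbb{H}$ with at most exponential growth at $\infty$ are polynomials in $j$).

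I do not anticipate any genuine obstacle: the existence step is a direct triangular elimination in the basis $\{1, j, j^2, \ldots\}$ of $\mathbb{Q}[j]$, with integrality preserved at each stage because the $q$-expansion coefficients of $j$ are integers; the uniqueness step reduces to the fact that $M_0(\Gamma) = \mathbb{C}$. If I wanted to be fancier, I could observe that the $J_m$ are precisely the Faber polynomials attached to $j - 744$ and are characterized by the generating-function identity $\sum_{m \geq 0} J_m(j(z)) q^m = \frac{E_4(z)^2 E_6(z)}{\Delta(z)} \cdot \frac{1}{j(z) - j(\tau)}$, but for this proposition the elementary inductive construction is the most direct route.
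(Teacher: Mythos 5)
Your proposal is correct and follows essentially the same route as the paper: existence via triangular elimination against powers of $j$ (you subtract the already-normalized $J_{m'}$ rather than raw powers $j^{m'}$, which is the same computation organized slightly more cleanly and makes the integrality bookkeeping transparent), and uniqueness from the fact that the difference of two candidates is a holomorphic weight-$0$ form vanishing at $\infty$, hence zero. No gaps.
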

	\begin{proof}
		Existence is easily justified by the fact that $\{j^n\}_n$ forms a basis of $M_0^!$. Thus, for $j(\tau)^m=q^{-m}+...+O(q)$, we can get rid of the negative powers of $q$ by substracting multiples of $j^{m'}$ for $m'<m$. This whole linear combination is $J_m$.\\
		For the uniqueness, note that the difference of two such forms would be a holomorphic cusp form of weight $0$ and thus the $0$ function.
	\end{proof}
	\begin{definition}
		The trace of $J_m(\tau)$ is $$Tr_m(d)=\sum_{Q\in \Gamma\backslash Q_d}\frac{1}{w_Q}J_m(\alpha_Q)$$
	\end{definition}
	These Faber polynomials will help us to rewrite $\mathcal{H}_d(j(\tau))$ in terms of these "higher traces"
	\begin{proposition}
		$\mathcal{H}_d(j(\tau))=q^{-H(d)}exp(-\sum_{m=1}^\infty Tr_m(d)q^m/m),\forall d$
	\end{proposition}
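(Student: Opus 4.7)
The plan is to take the logarithm of the claimed identity and reduce it to a single generating-function identity for the Faber polynomials. Since $\log\mathcal{H}_d(j(\tau))=\sum_Q \frac{1}{w_Q}\log(j(\tau)-j(\alpha_Q))$, $H(d)=\sum_Q 1/w_Q$, and $Tr_m(d)=\sum_Q J_m(\alpha_Q)/w_Q$, it suffices to prove, for every $\alpha\in\HH$,
$$j(\tau)-j(\alpha)=q^{-1}\exp\Bigl(-\sum_{m\geq 1}\frac{J_m(\alpha)}{m}q^m\Bigr),\qquad(\star)$$
and then raise to the $1/w_Q$-th power and multiply over $Q\in\Gamma\backslash Q_d$.

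To prove $(\star)$ I would apply $-\frac{1}{2\pi i}\frac{d}{d\tau}=-q\frac{d}{dq}$ to its logarithm. Using $-\frac{1}{2\pi i}\frac{dj}{d\tau}=E_4^2E_6/\Delta$ (already invoked in the proof of Zagier's theorem above), $(\star)$ becomes equivalent to the generating function identity
$$\frac{E_4(\tau)^2E_6(\tau)/\Delta(\tau)}{j(\tau)-X}=\sum_{m=0}^\infty J_m(X)\,q^m.\qquad(\star\star)$$
A direct inversion of $j(\tau)-X=q^{-1}\bigl(1-(X-744)q+O(q^2)\bigr)$ shows that the $q^m$-coefficient on the LHS is a polynomial $G_m(X)$ of degree $m$ in $X$ with leading coefficient $1$. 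Specializing $X=j(\sigma)$, $G_m(j(\sigma))$ is a weight-$0$ modular function of $\sigma$ with pole of order $m$ and leading coefficient $1$ at the cusp, so by the uniqueness clause of the preceding proposition it will coincide with $J_m(j(\sigma))$ once I verify $G_m(j(\sigma))=q_\sigma^{-m}+O(q_\sigma)$.

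The hard part is this last verification. I plan to proceed by a residue calculation: viewing $F_X(\tau)=\frac{E_4^2E_6/\Delta(\tau)}{j(\tau)-j(\sigma)}$ as a meromorphic function of $q_\tau$ with a simple pole at $q_\tau=q_\sigma$, deforming the contour $|q_\tau|=\epsilon$ outward past this pole produces a second Laurent expansion for $F_X$ valid in $|q_\tau|>|q_\sigma|$. Using the identity $j^k\cdot\frac{E_4^2E_6}{\Delta}=-\frac{1}{k+1}q\frac{d}{dq}(j^{k+1})$ and expanding $\frac{1}{j(\tau)-j(\sigma)}=-\sum_{k\geq 0}j(\tau)^k/j(\sigma)^{k+1}$ in that region, the $q_\tau^m$-coefficient of this outer expansion works out to $m\sum_{l\geq 1}\frac{[q^m](j^l)}{l\,j(\sigma)^l}$, which visibly has $q_\sigma$-expansion beginning at $q_\sigma^1$. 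The residue theorem then gives $G_m(j(\sigma))=q_\sigma^{-m}+O(q_\sigma)$, completing the identification $G_m=J_m$ and hence the proposition.
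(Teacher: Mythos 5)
Your reduction is the same as the paper's: both proofs boil the proposition down to the single identity $j(\tau)-j(z)=q^{-1}\exp\bigl(-\sum_{m\ge1}J_m(z)q^m/m\bigr)$ and then sum over $Q\in\Gamma\backslash Q_d$ with weights $1/w_Q$. Where you genuinely diverge is in how that identity is proved. The paper takes the logarithmic derivative in $\tau$ and then regards the result as a meromorphic modular function of the \emph{other} variable $z$: it is $\Gamma$-invariant in $z$, has a single simple pole of the correct residue at $z=\tau$ in the fundamental domain, and vanishes as $z\to i\infty$, so it is uniquely determined --- a two-line divisor argument. You instead stay in the $\tau$-variable, establish the generating-function identity $\frac{E_4^2E_6/\Delta}{j(\tau)-X}=\sum_m G_m(X)q^m$ with $G_m$ monic of degree $m$, and pin down $G_m=J_m$ by comparing the two Laurent expansions of $F_X$ in the annuli $|q_\tau|<|q_\sigma|$ and $|q_\tau|>|q_\sigma|$ via the residue at $q_\tau=q_\sigma$ (which indeed equals $-q_\sigma$, giving the $+q_\sigma^{-m}$ term with the right sign). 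This is the Asai--Kaneko--Ninomiya-style argument; it is longer but buys you the explicit identity $\sum_m J_m(X)q^m=\frac{E_4^2E_6/\Delta}{j(\tau)-X}$, which the paper's uniqueness argument never writes down and which is useful elsewhere (e.g.\ for the recursions in the first proof of Zagier's theorem). Two points deserve a sentence each in a final write-up: (i) to know that $q_\tau=q_\sigma$ is the \emph{only} pole crossed when deforming outward, take $\sigma$ in the standard fundamental domain, so that $\Im(\gamma\sigma)\le\Im(\sigma)$ forces all other preimages of $j(\sigma)$ to lie on or outside the circle $|q_\tau|=|q_\sigma|$ (and handle $\sigma\sim i,\rho$, where the pole order of $j-j(\sigma)$ changes, by continuity); (ii) the geometric-series expansion $\frac{1}{j(\tau)-j(\sigma)}=-\sum_k j(\tau)^k/j(\sigma)^{k+1}$ on the outer contour requires $|j(\tau)|<|j(\sigma)|$ there, which is not automatic but does hold once $\Im\sigma$ is large and the outer radius is a fixed multiple of $|q_\sigma|$ --- and verifying $G_m(j(\sigma))=q_\sigma^{-m}+O(q_\sigma)$ for $\Im\sigma$ large is all the uniqueness clause needs. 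With those caveats made explicit, your proof is correct.
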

	\begin{proof}
		It suffices to prove the equality $$j(\tau)-j(z)=q^{-1}\exp{\Big(-\sum_{m=1}^\infty J_m(z)q^m/m\Big)}$$ with $\Im{(\tau)}$ having sufficiently large imaginary part. To do that, we note that the logarithmic derivative with respect to $\tau$ of either side is a $\Gamma$-invariant meromorphic function (for $\Im{(\tau)}$ large enough) of $z$ whose only pole in the fundamental domain is a simple pole of residue $1$ at $z=\tau$ which vanishes at $\infty$.
	\end{proof}
	To explicitly compute these higher traces, we will generalize the functions $f_d$ and $g_D$ constructed before
	\begin{definition}{\cite{Jen}\\}
		Let $f=\sum a(n)q^n\in M_{k+1/2}^{!,+}$ and $p$ an odd prime. Then the half-integral weight Hecke operator $T'(p^2)$ maps $f$ to a modular form of same weight given by $$f(\tau)|_{k+1/2}T'(p^2)=\sum\Big(a(p^2n)+\Big(\frac{(-1)^k}{n}\Big)\Big(\frac{n}{p}\Big)p^{k-1}a(n)+p^{2k-1}a(\frac{n}{p^2})\Big)$$This formula also holds for $p=2$ if we take $n/2=0$ if $n$ is even and $(-1)^{(n^2-1)/8}$ if $n$ is odd. In the case of $k\leq0$, this formula introduces nontrivial denominators so we normalize by multiplying by $p^{1-2k}$ so that our forms will still have integer coefficients, giving $$T(p^2)=\left\{
		\begin{array}{ll}
		p^{1-2k}T'(p^2), & k\leq 0 \\
		T'(p^2), & \text{ otherwise }
		\end{array}
		\right.$$
	\end{definition}
	For any $m\geq 0$, we apply $|_{1/2}T(m^2)$ to $f_d$ and $|_{3/2}T(m^2)$ to $g_D$ and define $A_m(D,d)$, $B_m(D,d)$ to be the coefficients of $f_d|_{1/2}T(m^2)$ and $g_D|_{3/2}T(m^2)$ respectively. These are integers and using the definition of the Hecke operators, we can compute $A_p(D,d)=pA(p^2D,d)+\frac{D}{p}A(D,d)+A(p^{-2}D,d)$ and $B_p(D,d)=B(D,p^2d)+\frac{-d}{p}B(D,d)+pB(D,p^{-2}d)$ (with the convention that $A(p^{-2}D,d)=0$ unless $p^{-2}D$ is an integer congruent to $0,1[4]$ by the plus-space condition and the same applies for $B(D,p^{-2}d)$). Letting $m$ arbitrary and $D=1$, we get $$A_m(1,d)=\sum_{n|m}nA(n^2,d)=-B_m(1,d),\forall d$$ and again we may ask if this relation is true for all $D$. In fact it has a generalization for fundamental discriminants $D$ given by $$A_m(D,d)=\sum_{n|m}n\Big(\frac{D}{m/n}\Big)A_1(n^2D,d)=-B_m(D,d)$$ We summarize the above facts in the following theorem
	\begin{lemma}{\cite{Za}}
		\begin{itemize}
			\item[1.] $\mathcal{H}_d(j(\tau))=q^{-H(d)}\exp{\Big(-\sum_{m=1}^\infty Tr_m(d)q^m/m\Big)},\forall d$.
			\item[2.] $Tr_m(d)=-B_m(1,d),\forall m,d$.
			\item[3.] $A_m(D,d)=-B_m(D,d),\forall m,D,d$.
		\end{itemize}
	\end{lemma}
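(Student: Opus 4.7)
The plan is to dispose of the three assertions in order: (1) follows from the proposition established immediately above, (3) is the core of the lemma and will be handled by an enhanced Kaneko-style constant-term argument with Hecke operators, and (2) then drops out as a corollary of (1), (3), and Borcherds' theorem.

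For part (1), I would directly invoke the proposition just proved. The argument takes the factorization $j(\tau)-j(z)=q^{-1}\exp\bigl(-\sum_{m\geq 1}J_m(z)q^m/m\bigr)$, specializes $z$ to each CM root $\alpha_Q$ as $Q$ ranges over $\Gamma\backslash Q_d$, raises the corresponding factor to the power $1/w_Q$, and multiplies over $Q$; the leading exponent of $q$ sums to $H(d)=\sum_Q 1/w_Q$ and the exponential term collects into $\sum_Q J_m(\alpha_Q)/w_Q=Tr_m(d)$.

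Part (3) is the heart of the lemma and upgrades the $m=1$ Kaneko argument already presented above. The key new input is the self-adjointness of $T(m^2)$ with respect to the pairing $\langle f,g\rangle := [q^0]\bigl(fg\,|\,U_4\bigr)$ on $M_{1/2}^{+,!}(\Gamma_0(4))\times M_{3/2}^{+,!}(\Gamma_0(4))$, which I would establish by unwinding the half-integral-weight Hecke operator into its coset presentation, interchanging the order of summation with the $U_4$-trace, and checking that the combinatorial coefficients on the weight-$1/2$ and weight-$3/2$ sides match. Granting this, we observe that $(f_d|T(m^2))\cdot g_D\in M_2^!(\Gamma_0(4))$, so its $U_4$-image lies in $M_2^!(\Gamma)$, a space spanned by polynomials in $j'(\tau)$ and hence having identically vanishing constant term. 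Expanding $\langle f_d|T(m^2),g_D\rangle$ and $\langle f_d,g_D|T(m^2)\rangle$ in Fourier coefficients and identifying them via self-adjointness, one extracts $A_m(D,d)+B_m(D,d)=0$ after the bookkeeping of principal-part contributions on either side cancels symmetrically.

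Part (2) is then a corollary. Combining (1) with the Borcherds product $\mathcal{H}_d(j(\tau))=q^{-H(d)}\prod_{n\geq 1}(1-q^n)^{A(n^2,d)}$ and taking logarithms, the coefficient of $q^m/m$ yields $Tr_m(d)=\sum_{n\mid m}n\,A(n^2,d)$; by the Hecke formula for $A_m(D,d)$ specialized to $D=1$, where the Kronecker-symbol middle term and the shifted term collapse, this is exactly $A_m(1,d)$. Applying (3) at $D=1$ converts this to $-B_m(1,d)$, finishing the proof. The main obstacle will be establishing the self-adjointness of $T(m^2)$ with respect to the constant-term pairing, together with the careful bookkeeping of the enlarged principal parts of $f_d|T(m^2)$ and $g_D|T(m^2)$, which contain terms beyond the leading $q^{-m^2 d}$ and $q^{-m^2 D}$ coming from the middle Kronecker-symbol term in the Hecke formula, so that the cross-contributions to the constant term cancel symmetrically on both sides of the identification.
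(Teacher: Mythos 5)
Your part (1) is precisely the paper's own argument: it is the proposition proved immediately before the lemma, obtained by specializing $j(\tau)-j(z)=q^{-1}\exp\big(-\sum_{m\geq 1}J_m(z)q^m/m\big)$ at $z=\alpha_Q$, raising to the power $1/w_Q$, and multiplying over $Q\in\Gamma\backslash Q_d$; nothing to add there. For parts (2) and (3) the paper gives essentially no proof beyond the $m=1$ case of (3) (the Kaneko constant-term argument) and a citation to Zagier, so you are filling a genuine gap, but your part (3) is over-engineered and slightly misdescribed. The self-adjointness of $T(m^2)$ for the pairing $\langle f,g\rangle=[q^0](fg|_{U_4})$ is true but does no work: since $T(m^2)$ preserves $M_{1/2}^{+,!}(\Gamma_0(4))$, the single product $(f_d|T(m^2))\cdot g_D$ already lies in $M_2^{!}(\Gamma_0(4))$, its $U_4$-image is a polynomial in $j'$, and its constant term therefore vanishes. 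Computing that one constant term directly, the $q^{D}$-coefficient of $f_d|T(m^2)$ against the $q^{-D}$ of $g_D$ contributes $A_m(D,d)$, while the principal part of $f_d|T(m^2)$ --- for $m=p$ prime this is $q^{-p^2d}+\big(\tfrac{-d}{p}\big)q^{-d}+pq^{-d/p^2}$, read off from the Hecke formula at negative indices --- against the nonnegative part of $g_D$ contributes exactly $B(D,p^2d)+\big(\tfrac{-d}{p}\big)B(D,d)+pB(D,d/p^2)=B_m(D,d)$. These two contributions do not ``cancel symmetrically''; their sum \emph{is} the constant term, and its vanishing \emph{is} the identity $A_m(D,d)+B_m(D,d)=0$. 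Your plan succeeds once you replace the self-adjointness detour by this direct computation and extend multiplicatively to composite $m$.

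The more serious problem is part (2), which as proposed is circular relative to the paper's architecture. You obtain $Tr_m(d)=\sum_{n\mid m}nA(n^2,d)=A_m(1,d)$ by taking the logarithm of the Borcherds product $\mathcal{H}_d(j(\tau))=q^{-H(d)}\prod_{n\geq 1}(1-q^n)^{A(n^2,d)}$; but the passage of the paper immediately following this lemma derives that very product formula \emph{from} parts (1)--(3). If (2) is deduced from the product formula, that subsequent derivation collapses to a tautology. Since the product formula is also stated earlier as a black-box consequence of Borcherds' isomorphism theorem, your argument is not logically false, but it defeats the stated purpose of the lemma. The intended route establishes $Tr_m(d)=-B_m(1,d)$ independently --- either by rerunning the weight-two residue and recursion argument used for the first proof of $\mathbf{t}(d)=-B(d)$ with $J_m=j|_{T(m)}$ in place of $J_1$, or by showing that the generating series $\sum_d Tr_m(d)q^d$ equals $-g_1|_{T(m^2)}$ by Hecke equivariance --- and only afterwards deduces the product expansion.
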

	Now we are ready to construct our $q$-product expansion for the modified Hilbert class polyomial
	\begin{proof}{(Borcherds)}
		$$\mathcal{H}_d(j(\tau))=q^{-H(d)}\exp(-\sum_{m=1}^\infty Tr_m(d)q^m/m)$$ $$=q^{-H(d)}\exp(\sum_{m=1}^\infty B_m(1,d)q^m/m)$$ $$=q^{-H(d)}\exp(\sum_{m=1}^\infty -A_m(1,d)q^m/m)$$ $$=q^{-H(d)}\exp(\sum_{m=1}^\infty \sum_{n|m}-nA(n^2,d)q^m/m)$$ and by exchanging order of summation $$=q^{-H(d)}\exp(\sum_{n\geq 1,m\geq 1}-nA(n^2,d)q^{mn}/mn)$$ $$=q^{-H(d)}\exp(\sum_{n\geq 1,m\geq 1}-A(n^2,d)(q^n)^m/m)$$ $$=q^{-H(d)}\exp(\sum_{n\geq 1}A(n^2,d)\cdot -\sum_{m\geq 1}(q^n)^m/m)$$ $$=q^{-H(d)}\exp(\sum_{n\geq 1}A(n^2,d)\log(1-q^n))$$ $$=q^{-H(d)}\prod_{n=1}^\infty \exp(A(n^2,d)\log(1-q^n))$$ $$=q^{-H(d)}\prod_{n=1}^\infty\exp(\log((1-q^n)^{A(n^2,d)}))$$ $$=q^{-H(d)}\prod_{n=1}^\infty(1-q^n)^{A(n^2,d)}$$
		where the exponents, as described by Borcherds theorem, are those of weight $1/2$ modular forms.
	\end{proof}
	\pagebreak
	
	\section{Traces of Singular Moduli for the Fricke Group}
	There are various directions in which mathematicians have extended Zagier's work on singular moduli. We will explore one of these paths by studying the Fricke group.\\
	Let $p$ be a prime in the set $\{2,3,5,7,11,13,17,19,23,29,31,41,47,59,71\}$, $W_p\triangleq \m{0&-1/\sqrt{p}\\\sqrt{p}&0}$ be the Fricke involution and $\Gamma_0(p)^*\triangleq \langle \Gamma_0(p),W_p \rangle<SL_2(\RR)$ be the Fricke group acting on $\HH$ in the usual way. The above list of primes is not random, it consists of these values of $p$ for which the modular curve $X_0(p)^*=\Gamma_0(p)^*\backslash \HH\cup \PP_1(\QQ)$ has genus $0$. More precisely, we have the famous theorem 
	\begin{theorem}
		$$p|\#\MM\iff X_0(p)^*\text{ has genus }0$$ where $\MM$ denotes the Monster group.
	\end{theorem}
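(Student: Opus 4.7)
The plan is to prove the equivalence as a direct computation: on one side we determine explicitly the set of primes $p$ for which the genus of $X_0(p)^*$ vanishes, on the other we factor $\#\MM$, then we compare the two lists. There is no known ``conceptual'' proof that avoids this verification step, which is precisely what makes Ogg's observation so striking and what motivated monstrous moonshine in the first place.

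The first step is to extract a closed-form expression for $g(X_0(p)^*)$. Applying Riemann--Hurwitz to the degree-$2$ covering $X_0(p) \to X_0(p)^*$ induced by the Fricke involution $W_p$ gives
$$g(X_0(p)^*) = \tfrac{1}{2}\bigl(g(X_0(p)) + 1\bigr) - \tfrac{1}{4}\,\nu_{W_p},$$
where $\nu_{W_p}$ is the number of fixed points of $W_p$ on the compactified modular curve. A direct analysis identifies these fixed points with $\Gamma_0(p)$-orbits of CM points whose associated quadratic form has discriminant $-4p$, together with those of discriminant $-p$ when $p \equiv 3 \pmod 4$; hence
$$\nu_{W_p} = h(-4p) + \epsilon(p)\,h(-p), \qquad \epsilon(p) = \begin{cases}1 & p \equiv 3 \pmod 4,\\ 0 & \text{otherwise,}\end{cases}$$
at least for $p \geq 5$ (with small adjustments for $p=2,3$). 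Substituting the classical formula
$$g(X_0(p)) = \frac{p+1}{12} - \tfrac{1}{4}\bigl(1 + \bigl(\tfrac{-1}{p}\bigr)\bigr) - \tfrac{1}{3}\bigl(1 + \bigl(\tfrac{-3}{p}\bigr)\bigr)$$
then expresses $g(X_0(p)^*)$ purely in terms of $p$, Legendre symbols, and class numbers of imaginary quadratic fields.

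Next comes the verification. For $p$ below a modest bound (say $p < 100$) I would compute the above formula directly and confirm that $g(X_0(p)^*) = 0$ precisely on
$\{2,3,5,7,11,13,17,19,23,29,31,41,47,59,71\}.$
For $p$ larger than this bound, I would invoke the elementary Gauss estimate $h(-d) \leq \tfrac{1}{\pi}\sqrt{d}(2 + \log d)$ to control $\nu_{W_p} = O(\sqrt{p}\log p)$, and deduce
$$g(X_0(p)^*) \geq \tfrac{1}{2}\bigl(\tfrac{p+1}{12} - 3\bigr) - O(\sqrt{p}\log p),$$
which is strictly positive for all $p$ beyond an easily computed threshold, so the finite list above is complete.

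Finally, one inspects the standard factorization
$$\#\MM = 2^{46}\cdot 3^{20}\cdot 5^9\cdot 7^6\cdot 11^2\cdot 13^3\cdot 17\cdot 19\cdot 23\cdot 29\cdot 31\cdot 41\cdot 47\cdot 59\cdot 71,$$
whose set of prime divisors matches the genus-zero list term by term, closing the equivalence. The main obstacle is getting the fixed-point count $\nu_{W_p}$ correct: one must carefully track the elliptic fixed points of $\Gamma_0(p)$ and the congruence conditions on $p \bmod 4$ (and on $p \bmod 3$ to avoid the special fixed points at cube roots of unity). After that the remaining genus arithmetic is routine bookkeeping, and the comparison with $\#\MM$ is immediate.
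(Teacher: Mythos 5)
The paper states this theorem without any proof at all --- it is quoted as ``the famous theorem'' (Ogg's observation) and used as motivation for restricting to the fifteen listed primes --- so there is no internal argument to compare yours against. Your outline is the standard and essentially only known proof: compute $g(X_0(p)^*)$ via Riemann--Hurwitz for the double cover $X_0(p)\to X_0(p)^*$, count the fixed points of $W_p$ as CM points of discriminant $-4p$ (and $-p$ when $p\equiv 3 \bmod 4$), feed in the classical genus formula for $X_0(p)$, and compare the resulting finite list with the prime factorization of $\#\MM$. All the ingredients you cite are correct: the Riemann--Hurwitz identity $g(X_0(p)^*)=\tfrac12(g(X_0(p))+1)-\tfrac14\nu_{W_p}$, the fixed-point count $\nu_{W_p}=h(-4p)+\epsilon(p)h(-p)$ (cusps are not fixed since $W_p$ swaps $0$ and $\infty$), and the genus formula for $X_0(p)$.

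One quantitative caveat: with the crude bound $h(-d)\le \tfrac{1}{\pi}\sqrt{d}\,(2+\log d)$, the inequality $\tfrac{p}{24}\gg \tfrac14\nu_{W_p}$ only becomes unconditional for $p$ in the low thousands, not for $p>100$; so the ``modest bound'' for the direct verification must be pushed to roughly $p<2000$ (or one must use a sharper effective class-number estimate). This does not break the argument --- it is exactly the kind of finite check you describe --- but as written the interval $100<p<2000$ is uncovered. With that bound adjusted, and with the small-$p$ corrections to $\nu_{W_p}$ at $p=2,3$ handled explicitly, your sketch is a complete and correct proof strategy, and it supplies an argument the paper itself omits.
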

	The idea is that we would like to extend Zagier's work in a fairly natural way by staying on a genus 0 modular curve. The following proposition gives us an explicit description of the elements of the Fricke group 
	\begin{proposition}
		Let $A\in \Gamma_0(p)^*$, then  $$A=\m{a\sqrt{e}&b/\sqrt{e}\\cp/\sqrt{e}&d\sqrt{e}}$$ with $a,b,c,d,e\in \ZZ,det(A)=ade-cpb/e=1,e|p,e>0$.
	\end{proposition}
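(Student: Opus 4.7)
The plan is to exploit the decomposition of $\Gamma_0(p)^*$ into cosets of $\Gamma_0(p)$. Since $p$ is prime, the divisor condition $e\mid p,\,e>0$ restricts to $e\in\{1,p\}$, and these two values will correspond precisely to the two cosets.

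First, I would verify by direct matrix multiplication that $W_p^2=-I$, so that $W_p$ has order two modulo $\Gamma_0(p)$ (since $-I\in\Gamma_0(p)$). Next, I would show that $W_p$ normalises $\Gamma_0(p)$: for any $A_0=\m{\alpha&\beta\\\gamma p&\delta}\in\Gamma_0(p)$, a direct computation gives
$$W_p A_0 W_p^{-1}=\m{\delta&-\gamma\\-\beta p&\alpha},$$
whose lower-left entry $-\beta p$ is divisible by $p$, so the conjugate still lies in $\Gamma_0(p)$. Combining these two facts yields the coset decomposition $\Gamma_0(p)^*=\Gamma_0(p)\sqcup\Gamma_0(p)W_p$.

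With this in hand, I would split into two cases. If $A\in\Gamma_0(p)$, write $A=\m{a&b\\cp&d}$ with $ad-bcp=1$; taking $e=1$ immediately yields the claimed form, and the determinant condition $ade-cpb/e=1$ reduces to $ad-bcp=1$. If instead $A=A_0W_p$ with $A_0=\m{a_0&b_0\\c_0p&d_0}\in\Gamma_0(p)$, direct multiplication produces
$$A=\m{b_0\sqrt{p}&-a_0/\sqrt{p}\\d_0\sqrt{p}&-c_0\sqrt{p}},$$
so setting $e=p$ together with $(a,b,c,d)=(b_0,-a_0,d_0,-c_0)$ gives the stated form, and the identity $adp-bc=a_0d_0-b_0c_0p=\det(A_0)=1$ confirms the determinant condition.

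The main obstacle is really the coset decomposition; once the normalisation $W_pA_0W_p^{-1}\in\Gamma_0(p)$ is checked, everything else reduces to bookkeeping with two $2\times 2$ matrix products. No deeper input from the theory of modular forms is needed for this purely structural statement.
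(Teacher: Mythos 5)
Your proposal is correct and follows essentially the same route as the paper: both reduce to the two cases $e=1$ and $e=p$ via $W_p^2=-I$ and then compute the single matrix product $A_0W_p$ explicitly. If anything, your version is slightly more complete, since you explicitly verify that $W_p$ normalises $\Gamma_0(p)$ to justify the coset decomposition $\Gamma_0(p)^*=\Gamma_0(p)\sqcup\Gamma_0(p)W_p$, a step the paper's proof leaves implicit when it asserts that products of the form $\gamma W_p$ suffice.
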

	\begin{proof}
		The fact that $e|p,e>0\implies e=1,e=p$. The case $e=1$ is trivial so look at $e=p$.\\
		First we observe that $W_p^2=-I$ where $I$ is the identity matrix. It follows that $W_p^4=I$ with $W_p^{-1}=W_p^3$. Thus, it suffices to consider only the product of an element $\gamma=\m{A&B\\Cp&D}\in \Gamma_0(p)$ and $W_p$ since $\gamma W_p^2=-\gamma, \gamma W_p^3=-\gamma W_p,\gamma W_p^4=\gamma$ (product in the other direction follows exactly the same steps so we don't repeat it). Hence $$\gamma W_p=\m{B\sqrt{p}&-A/\sqrt{p}\\Dp/\sqrt{p}&-C\sqrt{p}}$$ which is of the desired form.  
	\end{proof}
	In order to study traces of singular moduli for the Fricke group $\Gamma_0(p)^*$, we need to construct hauptmoduln for each prime $p$. We will explore how they can be constructed in a fairly easy way from $\eta$-quotients for $\Gamma_0(p),\,\,p=2,3,5,7,13$, those prime values for which $gen(X_0(p))=0$.
	\begin{definition}
		Let $\tau\in \HH$ and $\zeta_{24}=e^{2\pi i/24}$. Then the Dedekind Eta function is defined as $$\eta(\tau)=q^{1/24}\prod_{n\geq 1}(1-q^n)$$ and has the following properties
		\begin{itemize}
			\item[1.]$\eta\neq 0$ in $\HH$ 
			\item[2.]$\eta$ has a zero of order $1/24$ at $\infty$
			\item[3.]$\eta(\gamma\tau)=\psi(\gamma)(c\tau+d)^{1/2}\eta(\tau),\,\forall\gamma\in SL_2(\ZZ)$ where $\psi$ is some multiplier system.
		\end{itemize}
	\end{definition}
	\begin{definition}
		Let $G<SL_2(\RR)$ be a subgroup of genus 0. A Hauptmodul $f$ of $G$ is
		\begin{itemize}
			\item[1.] a meromorphic function on $G\backslash \HH\cup \PP_1(\QQ)=X_G$
			\item[2.] a generator of the function field on $X_G$ over $\CC$ or equivalently, a uniformizer $$f:X_G\cong \PP_1(\CC),\tau\mapsto f(\tau)$$
		\end{itemize}
		If furthermore, $f$ has a q-expansion of the form $$f(\tau)=q^{-1}+\sum_{n\geq 1}a_nq^n$$ with $a_n\in \ZZ,\,\forall n\geq 1$, then $f$ is called a canonical hauptmodul of $G$.
	\end{definition}
	The method goes as follow. We first start with a form of weight $0$ on $\Gamma_0(p)$ and then look at its transformation equation under $W_p$. Then the trick is to complete this form with another form of weight $0$ on $\Gamma_0(p)$ so that we obtain the desired modular transformation. This might seem vague but I hope the following lines will clarify the situation. \\
	Consider the Dedekind $\eta$-function of weight $1/2$ on $SL_2(\ZZ)$. Then $\eta(p\tau)$ is a modular form of weight $1/2$ on $\Gamma_0(p)$ and the $\eta$-quotients $\eta(p\tau)/\eta(\tau)$, $\eta(\tau)/\eta(p\tau)$ are modular forms of weight $0$ on $\Gamma_0(p)$. By taking $W_p=\m{0&-1/\sqrt{p}\\\sqrt{p}&0}$ we can compute its action on the function $\eta(m\tau)$ $$\eta(mW_p\tau)=\eta(m\frac{-1}{p\tau})=\eta(\frac{-1}{(p/m)\tau})=\sqrt{-i\frac{p}{m}\tau}\eta(\frac{p}{m}\tau)$$ and so for the quotient $u(\tau)=\Big(\frac{\eta(p\tau)}{\eta(\tau)}\Big)^a$ for some $a\in \ZZ$, we get $$u(W_p\tau)=\Big(\frac{\eta(pW_p\tau)}{\eta(W_p(\tau))}\Big)^a=\Big(\frac{\sqrt{-i\tau}\eta(\tau)}{\sqrt{-ip\tau}\eta(p\tau)}\Big)^a=p^{-a/2}\Big(\frac{\eta(\tau)}{\eta(p\tau)}\Big)^a$$ Similarly, if we look at the quotient $v(\tau)=\Big(\frac{\eta(\tau)}{\eta(p\tau)}\Big)^a$, we get 
	$$v(W_p\tau)=p^{a/2}\Big(\frac{\eta(p\tau)}{\eta(\tau)}\Big)^a$$ Both functions are inverses under the action of $W_p$ and in fact $$u(W_p\tau)=p^{-a/2}v(\tau),v(W_p\tau)=p^{a/2}u(\tau)$$ Putting $u$ and $v$ together, we can see that $$f_a(\tau)\triangleq p^{a/2}u(\tau)+v(\tau)=p^{a/2}\Big(\frac{\eta(p\tau)}{\eta(\tau)}\Big)^a+\Big(\frac{\eta(\tau)}{\eta(p\tau)}\Big)^a$$ transforms under $W_p$ like a modular function $$f_a(W_p\tau)=p^{a/2}u(W_p(\tau))+v(W_p(\tau))=p^{a/2}p^{-a/2}v(\tau)+p^{a/2}u(\tau)=f_a(\tau)$$ It follows that $f$ is a modular form of weight $0$ under $\Gamma_0(p)^*=\langle\Gamma_0(p),W_p\rangle$. Now we would like to normalize $f_a$ so that it has a pole of order $1$ at infinity. To do so, we compute the value of $a$ for a given $p$. It is easy to understand the behaviour of $f_a$ at $\infty$ since the only term in $f_a$ that will affect the behaviour as $q\to 0$ is $\Big(\frac{\eta(\tau)}{\eta(p\tau)}\Big)$ which behaves like $q^{a(1-p)/24}$ and because we want a pole of order $1$, we require that $\frac{a(1-p)}{24}=-1$ and this gives $a=\frac{-24}{1-p}$.\\
	Now we are left with removing the constant term of $f_a$ to get a canonical hauptmodul. This can be done by numerical computations using MAGMA for example. \\
	Here are the hauptmoduln for $p=2,3,5,7,13$
	\begin{itemize}
		\item[2.] $j_2(\tau)=2^{12}\Big(\frac{\eta(2\tau)}{\eta(\tau)}\Big)^{24}+\Big(\frac{\eta(\tau)}{\eta(2\tau)}\Big)^{24}+24$
		\item[3.]$j_3(\tau)=3^6\Big(\frac{\eta(3\tau)}{\eta(\tau)}\Big)^{12}+\Big(\frac{\eta(\tau)}{\eta(3\tau)}\Big)^{12}+12$
		\item[5.]$j_5(\tau)=5^3\Big(\frac{\eta(5\tau)}{\eta(\tau)}\Big)^{6}+\Big(\frac{\eta(\tau)}{\eta(5\tau)}\Big)^{6}+6$
		\item[7.]$j_7(\tau)=7^2\Big(\frac{\eta(7\tau)}{\eta(\tau)}\Big)^{4}+\Big(\frac{\eta(\tau)}{\eta(7\tau)}\Big)^{4}+4$
		\item[13.]$j_{13}(\tau)=13\Big(\frac{\eta(13\tau)}{\eta(\tau)}\Big)^{2}+\Big(\frac{\eta(\tau)}{\eta(13\tau)}\Big)^{2}+2$
	\end{itemize}
	The method we have described so far worked only when the original group was of genus 0. For other $p's$, the functions $f_a$ will no longer define isomorphisms but multiple covers. To obtain a general method for computing hauptmoduln, we turn ourselves towards the theory of Poincaré series and Rademacher sums. We will follow in details the article of Miranda Chang and John Duncan \cite{MCJD}. \\
	What are the natural ways to obtain modular forms ? Or more generally, what are the natural ways to construct functions satisfying certain symmetries given by a set $\Gamma$ ? One could for example pick a function $g$ and define the symmetric function $f$ as $$f(\tau)=\sum_{\gamma\in \Gamma}g(\gamma\tau)$$
	by summing over all the images of $g$. If $\Gamma$ is a finite set then $f$ is invariant under the group of symmetry. However, if $\Gamma$ is infinite, we might have convergence problems. One way to go around this problem was pioneered by Poincaré and consists to start with a function $g$ already invariant under a large portion of the group and then sum over cosets representatives $$f(\tau)=\sum_{\gamma\in \Gamma_g\backslash \Gamma}g(\gamma\tau)$$ where $\Gamma_g$ is the set fixing $g$. Again one would have to be careful about the convergence of the series. As an example, suppose we want to construct a modular form of weight $w=2k$ over $\Gamma=SL_2(\ZZ)$. Using Poicaré's idea, we consider the function $g(\tau)=e^{2\pi im\tau}=e(m\tau)$. Then the subgroup of $\Gamma$ fixing $g$ is the stabilizer of the cusp at $\infty$ $$\Gamma_\infty=\Big\{\m{1&b\\0&1}:b\in \ZZ\Big\}$$
	Recall that a holomorphic function $f$ on $\HH$ is a modular form of weight $w$ if it satisfies the equation $f|_w(\Gamma)=f\,\,\forall\gamma\Gamma$. Summing over all the images of $g$ under the slash-$w$ operator leads us to consider $$f(\tau)=\sum_{\gamma\in \Gamma_\infty\backslash\Gamma}g|_w(\gamma\tau)=\sum_{\gamma\in\Gamma_\infty\backslash \Gamma} e(m\frac{a\tau+b}{c\tau+d})(c\tau+d)^{-w}$$ where $j(\gamma,\tau)=c\tau+d$ is the automorphy factor satisfying $j(\gamma,\gamma'\tau)=j(\gamma\gamma',\tau)/j(\gamma',\tau),\gamma\in SL_2(\ZZ)$. When $k>1$, the above is holomorphic in $\HH$ and for $k\leq 1$, the sum is not absolutely convergent. In fact, it is not even conditionnaly convergent for $k<1/2$. We can now ask if there are ways to regularize the above sum to overcome the convergence problems. These regularizations of Poicaré series are refered to as Rademacher sums. We now investigate their properties in view of constructing hauptmoduln for $\Gamma_0(p)^*$.\\
	Recall that $SL_2(\RR)=Aut(\HH)$ is the automorphism group of the upper half-plane $\HH$ where the action is via fractional linear transformations $\gamma\cdot\tau=\frac{a\tau+b}{c\tau+d}$. Let $j(\gamma,\tau)=(c\tau+d)^{-2}$ be the derivative of this action so that $(c,d)$ is the lower row of $\gamma$. Now let $\Gamma<SL_2(\RR)$ denote a subgroup of $SL_2(\RR)$ that contains $\pm I$ and that is commensurable with $SL_2(\ZZ)$. Let $\Gamma_\infty<\Gamma$ be the stabilizer subgroup of the cusp at infinity. Define the translation matrices $T=\m{1&1\\0&1}$ and $T^h=\m{1&h\\0&1}$. We can find a unique $h>0$ such that $\Gamma_\infty=\langle -I,T^h\rangle$. This $h$ is called the width of $\Gamma$ at $\infty$. In the case of $\Gamma_0(N)$ for example, this width is $h=1$. \\
	For $w\in \RR$, we say that the function $\psi:\Gamma\to\CC$ is a multiplier system of weight $w$ if $$\psi(\gamma_1)\psi(\gamma_2)j(\gamma_1,\gamma_2\tau)^{w/2}j(\gamma_2,\tau)^{w/2}=\psi(\gamma_1\gamma_2)j(\gamma_1\gamma_2,\tau)^{w/2}$$ for all $\gamma_1,\gamma_2\in \Gamma$ where we fix the principal branch of the logarithm in order to define the exponential $x\mapsto x^s$ for $s$ not an integer. With this multiplier system, we will say that a holomorphic function $f$ in $\HH$ is a modular form of weight $w$ and multiplier system $\psi$ for $\Gamma$ if $f|_{\psi,w}(\gamma)=f\,\,\forall\gamma\in\Gamma$. Remark that since $-\gamma\tau=\gamma\tau$, we have that $f|_{\psi,w}(-I\tau)=\psi(-I)(-1)^{-w}f(\tau)=f(\tau)$ and thus $\psi(-I)(-1)^{-w}=1$. It follows that $\psi(-I)=e(w/2)$. This condition is the consistency condition our multiplier system must satisfy in order that the corresponding space of weakly holomorphic modular forms is non vanishing. \\
	$\Gamma$ is assumed to be commensurable with $SL_2(\ZZ)$ so its natural action on the boundary of $\HH$ restricts to $\PP_1(\QQ)$. As always, the orbits of $\PP_1(\QQ)$ are called cusps of $\Gamma$ and if the quotient space $X_\Gamma=\Gamma\backslash(\HH\cup \PP_1(\QQ))$ is a genus zero surface, we say that $\Gamma$ is a genus 0 group. This is the case for $\Gamma_0(p)^*$.\\
	If $\Gamma$ has width $h$ at inifnity, then any multiplier system $\psi$ for $\Gamma$ restricts to a character on $\langle T^h \rangle<\Gamma_\infty$ and so we have $\psi(T^h)=e(\alpha)$ for a unique $\alpha\in \RR$, $0\leq \alpha<1$. It follows that the natural $\Gamma_\infty$ invariant function to consider for the slash-$(\psi,w)$ action is $q^\mu=e(\mu\tau)$ for $\mu$ such that $h\mu+\alpha\in \ZZ$. Summing the images of $g$ under the slash operator for cosets representatives gives $$P^{[\mu]}_{\Gamma,\psi,w}(\tau)=\sum_{\gamma\in\Gamma_\infty\backslash\Gamma}e(\mu\gamma\tau)\psi(\gamma)j(\gamma,\tau)^{w/2}$$ This series is called a \textit{Poicaré Series of weight $w$ and index $\mu$ for $\Gamma$ with multiplier $\psi$}.\\
	As said before, the case $w\leq 2$ gives in general divergent series and so we may ask if there is a natural way to regularize such sums. Since we are interested in hauptmoduln, we will focus on the case $w=0$.\\
	In order to regularize the above Poincaré series, we need to modify the terms in the series and the order of summation. We write $\gamma\infty$ for the limit of $\gamma\tau$ as $\tau\to i\infty$ so $\gamma\infty=a/c$ for $c\neq 0$ and is undefined otherwise. Now, for $w<1$, define the Rademacher sums $$R^{[\mu]}_{\Gamma,\psi,w}(\tau)=\delta_{a,0}\frac{1}{2}c_{\Gamma,\psi,w}(\mu,0
	)+\lim_{K\to\infty}\sum_{\gamma\in \Gamma_\infty\backslash \Gamma_{K,K^2}}e(\mu\gamma\tau)r_w^{[\mu]}(\gamma,\tau)\psi(\gamma)j(\gamma,\tau)^{\frac{w}{2}}$$ with $$r_w^{[\mu]}(\gamma,\tau)=\frac{1}{\Gamma(1-w)}\gamma(1-w,2\pi i\mu(\gamma\tau-\gamma\infty))$$ $$\Gamma(s)=\int_{0}^\infty t^{s-1}e^{-t}dt,\,\,s\in \RR_{>0}$$ $$\gamma(s,x)=\frac{\Gamma(s)}{e^x}\sum_{n\geq 0} \frac{x^{n+s}}{\Gamma(n+s+1)},\,\,s\in\RR_{>0},x\in \CC$$ $$c_{\Gamma,\psi,w}(\mu,0)=\frac{1}{h}e(-w/4)\frac{(2\pi)^{2-w}(-\mu)^{1-w}}{\Gamma(2-w)}\lim_{K\to \infty}\sum_{\gamma\in \Gamma_\infty\backslash \Gamma_K^\times/\Gamma_\infty}\frac{e(\mu \gamma\infty)}{c(\gamma)^{2-w}}\psi(\gamma)$$
	$$\Gamma_K^\times =\Big\{\m{a&b\\c&d}:0<|c|<K\Big\}$$ $$\Gamma_{K,K^2}=\Big\{\m{a&b\\c&d}:|c|<K,|d|<K^2\Big\}$$
	and $c(\gamma)>0$ the $c$ entry of $\gamma$. Furthermore, the above conditions leads to consider only $\mu\leq 0$
	since otherwise, $\tau\to 2\pi i\mu(\gamma\tau-\gamma\infty)$ covers the left-half plane and $r_w^{\mu}(\gamma,\tau)$ can fail to be continuous with respect to $\tau$. Here are results about convergence in the case of weight 0 
	\begin{theorem}
		Let $\Gamma<SL_2(\RR)$ commensurable with the modular group $SL_2(\ZZ)$ and containing $-I$. Then the Rademacher sum $R^{[\mu]}_{\Gamma,1,0}$ converges locally uniformly for $\tau\in \HH$ and for any negative integer $\mu$. Furthermore, the Rademacher sum is a weak mock modular form with shadow $$S_{\Gamma,1,0}^{[\mu]}(\tau)=\frac{(-\mu)}{\Gamma(1)}R_{\Gamma,1,2}^{[-\mu]}$$
	\end{theorem}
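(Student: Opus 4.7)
The plan is to handle convergence and the shadow identification as two separate tasks, both organised around the Rademacher regularizer $r_0^{[\mu]}$, which for $\mu<0$ tames the otherwise divergent Poincaré sum and simultaneously produces the non-holomorphic completion of the resulting harmonic Maass form.

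First, for convergence, I would decompose the cosets $\Gamma_\infty\backslash\Gamma$ into the identity coset (contributing the leading $q^\mu$) and the cosets with $c(\gamma)\neq 0$; since $\mu<0$, the prefactor $\tfrac{1}{2}c_{\Gamma,1,0}(\mu,0)$ does not enter. Specialising $w=0$ and using the identity $\gamma(1,x)=1-e^{-x}$ reduces $r_0^{[\mu]}(\gamma,\tau)$ to $1-e^{-2\pi i\mu(\gamma\tau-\gamma\infty)}$, and together with $\gamma\tau-\gamma\infty=-1/(c(c\tau+d))$ this simplifies each $c\neq 0$ summand to the cleaner form $e(\mu\gamma\tau)-e(\mu\gamma\infty)$. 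Individual terms are only of size $O(1/|c(c\tau+d)|)$, which is not absolutely summable; this is precisely what forces the Rademacher truncation $\Gamma_{K,K^2}$. I would then group the sum by $c$, observe that for fixed $c$ the partial sum over $|d|<K^2$ is driven by phases $e(\mu a/c)$ and is controlled by a Weil-type bound on Kloosterman sums giving $O(|c|^{1/2+\epsilon})$, and finally take the outer limit $|c|<K\to\infty$. Uniformity of these bounds on compact subsets of $\HH$ yields the locally uniform convergence.

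Second, for the shadow, I would complete $R^{[\mu]}_{\Gamma,1,0}$ to a harmonic Maass form $\widehat R^{[\mu]}_{\Gamma,1,0}$ by appending to each $\gamma$-term the Bruinier--Funke non-holomorphic completion of the incomplete gamma factor; this preserves the slash-$(1,0)$ invariance and displays the Rademacher sum as the holomorphic part of a weight-$0$ harmonic Maass form. Applying the weight-$0$ shadow operator $\xi_0=2i\,\overline{\partial_{\bar\tau}}$ commutes with the slash action (converting weight $0$ to weight $2$) and, on the completion of each $\gamma$-term, produces $-\mu\cdot e(-\mu\gamma\tau)\cdot j(\gamma,\tau)$ up to the universal factor $1/\Gamma(1)$. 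Resumming over the cosets in the same Rademacher ordering then yields exactly $\tfrac{(-\mu)}{\Gamma(1)}R^{[-\mu]}_{\Gamma,1,2}(\tau)$, which is the claimed shadow; note that for the target index $-\mu>0$ and weight $2$, the Rademacher sum reduces to a (conditionally convergent) classical Poincaré cusp-form series, so no further regularization is needed on the shadow side.

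The main obstacle, and the real technical heart of the proof, is the convergence step: no absolute convergence is available at weight zero, so one genuinely needs Kloosterman-sum cancellation along fixed $c$ together with the specific truncation $\Gamma_{K,K^2}$ to make the double limit exist and to be independent of the precise shape of the truncation. A subtle companion issue -- that the Rademacher prefactor $c_{\Gamma,1,0}(\mu,0)/2$ is calibrated precisely so that the contribution from the $c=0$ boundary of $\Gamma_{K,K^2}$ has a well-defined limit -- is invisible in the $\mu<0$ case treated here but is what makes the definition of $R^{[\mu]}_{\Gamma,1,0}$ self-consistent across all indices, and would be needed if one wished to extend the theorem to $\mu=0$.
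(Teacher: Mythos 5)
The paper does not actually prove this theorem: it is quoted from Cheng--Duncan \cite{MCJD} as background on Rademacher sums, so there is no in-paper argument to measure your proposal against. Judged on its own, your outline follows the standard strategy (Rademacher, Knopp, Niebur, Duncan--Frenkel, Cheng--Duncan), and the elementary reductions are correct: at $w=0$ one has $\gamma(1,x)=1-e^{-x}$, hence $e(\mu\gamma\tau)\,r_0^{[\mu]}(\gamma,\tau)=e(\mu\gamma\tau)-e(\mu\gamma\infty)$, and $\gamma\tau-\gamma\infty=-1/(c(c\tau+d))$ makes each $c\neq 0$ term of size $O(|c(c\tau+d)|^{-1})$. (One small slip: the constant $\tfrac12 c_{\Gamma,1,0}(\mu,0)$ is switched on by $\alpha=0$, i.e.\ by the multiplier being trivial on $T^h$, not by the sign of $\mu$; for $\psi=1$ it is present. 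This is harmless for convergence.)

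Two genuine gaps remain, and they are exactly the two technical hearts of the theorem. First, the cancellation step: a group merely commensurable with $SL_2(\ZZ)$ need not be a congruence group, so its generalized Kloosterman sums are not the classical ones and a ``Weil-type bound'' is not available off the shelf. The proofs in the literature get the needed power saving from estimates on the Selberg--Kloosterman zeta function via the spectral theory of the Laplacian (Selberg, Goldfeld--Sarnak), which apply to any such $\Gamma$. You should also separate two distinct sources of delicacy for fixed $c$: within a fixed residue class $d_0 \bmod c$ the sum over $d=d_0+nc$ is itself only conditionally convergent (a cotangent-type sum, which is precisely why the truncation is the symmetric $|d|<K^2$), while the cancellation \emph{across} residue classes is what produces the Kloosterman sum; your sketch conflates these. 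Second, the shadow: applying $\xi_0=2i\,\overline{\partial_{\bar\tau}}$ term by term to a completion of a non-absolutely-convergent sum and then resumming requires an interchange of limits that you do not justify --- indeed the naive completion of each term by the complementary $\Gamma(1,x)$ would reinstate the divergent Poincar\'e series. Cheng--Duncan avoid this by identifying $R^{[\mu]}_{\Gamma,1,0}$ with the specialization at the relevant value of $s$ of the analytically continued Maass--Poincar\'e (Niebur) series, computing $\xi_0$ in the region of convergence, and continuing back; some such device is needed. Relatedly, the target $R^{[-\mu]}_{\Gamma,1,2}$ at weight $2$ is itself only conditionally convergent (weight $2$ sits on the boundary of absolute convergence), so ``no further regularization is needed on the shadow side'' is too quick: the same Rademacher ordering of summation is what gives that object meaning.
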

	Recall that the cusp forms of weight 2 with trivial multiplier for $\Gamma$ are in correspondence with holomorphic 1-forms on the Riemann surface $X_\Gamma$ and the dimension of the space of holomorphic 1-forms on a Riemann surface is equal to its genus. So if $\Gamma$ has genus zero, then $X_\Gamma$ has no non-zero 1-forms and we must have $S_{\Gamma,1,0}^{[-1]}=0$. It follows that a Rademacher Sum on a genus zero group is a weak mock modular form with trivial shadow and thus a weakly holomorphic modular form.\\
	In order to describe the Fourier expansion of Rademacher Sums, we introduce Rademacher series. Let $\mu,\nu\in \frac{1}{h}(\ZZ-\alpha)$, then $$c_{\Gamma,\psi,w}(\mu,\nu)=\frac{1}{h}\lim_{K\to\infty}\sum_{\gamma\in \Gamma_\infty\backslash \Gamma_K^\times/\Gamma_\infty} K_{\gamma,\psi}(\mu,\nu)B_{\gamma,w}(\mu,\nu)$$ where $\Gamma_K^\times$ is as before and $K_{\gamma,\psi}(\mu,\nu)=e(\mu\frac{a}{c})e(\nu\frac{d}{c})\psi(\gamma)$ with $B_{\gamma,w}(\mu,\nu)=e(\frac{-w}{4})\sum_{k\geq 0}(\frac{2\pi}{c})^{2k+2-w}\frac{(-\mu)^{k+1-w}}{\Gamma(k+2-w)}\frac{\nu^k}{k!}$ for $w\leq 1$, $\gamma=\m{a&b\\c&d},\,c>0$.
	\begin{theorem}
		Let $\Gamma<SL_2(\RR)$ a subgroup commensurable with $SL_2(\ZZ)$ and containing $-I$. Then the Rademacher series $c_{\Gamma,1,0}(\mu,\nu)$ and $c_{\Gamma,1,2}(\mu,\nu)$ converge for all $\mu,\nu\in \ZZ$.
	\end{theorem}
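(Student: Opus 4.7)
The plan is to prove absolute convergence of both Rademacher series simultaneously by grouping the double-coset sum according to the value $c=c(\gamma)>0$ and combining a size estimate for the Bessel-type factor $B_{\gamma,w}$ with Weil's bound for Kloosterman sums. Since each summand depends, up to the (here trivial) multiplier, only on $c$ together with $a\bmod c$ and $d\bmod c$, the inner sum over double cosets with fixed $c$ is a classical Kloosterman sum, and the outer sum in $c$ will then be dominated by a convergent Dirichlet-type series.

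First I would estimate the Bessel factor. For $w\in\{0,2\}$, the series defining $B_{\gamma,w}(\mu,\nu)$ is a power series in $(2\pi/c)^2$ whose $k$-th term is bounded in absolute value by $(4\pi^{2}|\mu\nu|/c^{2})^{k}/(k!)^{2}$ times a constant depending only on $\mu,\nu,w$. For $w=0$ the leading term is of order $\mu/c^{2}$; for $w=2$ the $k=0$ term vanishes because $1/\Gamma(0)=0$, so the leading contribution comes from $k=1$ and is of order $\nu/c^{2}$. In both cases one obtains a uniform bound
$$|B_{\gamma,w}(\mu,\nu)|\leq C_{\mu,\nu}\cdot c^{-2}$$
valid for all $c\geq 1$, with the constant $C_{\mu,\nu}$ independent of $c$.

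Next I would evaluate the inner exponential sum. With $\psi=1$, the factor $K_{\gamma,1}(\mu,\nu)=e((\mu a+\nu d)/c)$ depends only on residues modulo $c$. When $\Gamma=SL_{2}(\ZZ)$, the double cosets $\Gamma_\infty\backslash\Gamma/\Gamma_\infty$ with $c(\gamma)=c$ are parameterized by $a\in(\ZZ/c\ZZ)^{\times}$ with $d\equiv a^{-1}\pmod{c}$, so the inner sum becomes exactly the classical Kloosterman sum $S(\mu,\nu;c)$, to which Weil's bound
$$|S(\mu,\nu;c)|\leq \sigma_{0}(c)\,\gcd(\mu,\nu,c)^{1/2}\,c^{1/2}\ll_{\varepsilon}c^{1/2+\varepsilon}$$
applies. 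For $\Gamma$ only commensurable with $SL_{2}(\ZZ)$, I would pass to the finite-index common subgroup $\Gamma\cap SL_{2}(\ZZ)$ and absorb the index data into implicit constants, so that the same cancellation bound holds up to a multiplicative factor depending on $\Gamma$.

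Combining the two estimates, the partial sum truncated at $|c(\gamma)|<K$ is bounded above by
$$C'_{\mu,\nu}\sum_{c=1}^{K}c^{1/2+\varepsilon}\cdot c^{-2}=C'_{\mu,\nu}\sum_{c=1}^{K}c^{-3/2+\varepsilon},$$
which converges as $K\to\infty$. Since the bound is absolute, both the existence of the limit and its independence from the order of summation follow at once, establishing the convergence of $c_{\Gamma,1,0}(\mu,\nu)$ and $c_{\Gamma,1,2}(\mu,\nu)$ for all integers $\mu,\nu$. The main obstacle will be the rigorous reduction of the inner exponential sum to a classical Kloosterman sum when $\Gamma$ is only commensurable with $SL_{2}(\ZZ)$: one must carefully account for cusp widths and for the structure of the double-coset representatives so that Weil's bound applies term by term, rather than losing the cancellation to an additional averaging over the finite-index quotient.
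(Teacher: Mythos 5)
First, a point of reference: the paper does not actually prove this theorem; it is imported verbatim from Cheng--Duncan \cite{MCJD}, so your proposal is being measured against the literature rather than against an argument in the text. That said, your two ingredients are the right ones for the congruence case. The estimate of the Bessel-type factor is correct: for $w\in\{0,2\}$ the series $B_{\gamma,w}(\mu,\nu)$ is $(2\pi/c)^{2}$ times an entire function of $4\pi^{2}|\mu\nu|/c^{2}$ (the $k=0$ term for $w=2$ indeed dying because $1/\Gamma(0)=0$), so $|B_{\gamma,w}(\mu,\nu)|\le C_{\mu,\nu}c^{-2}$ uniformly in $c\ge 1$; and in the degenerate case $\mu=\nu=0$, where your quoted Weil bound would read $\sigma_0(c)\,c$, the factor $B_{\gamma,w}$ vanishes identically, so no harm is done. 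For $\Gamma=SL_2(\ZZ)$ the inner sum over double cosets with $c(\gamma)=c$ is genuinely the classical Kloosterman sum $S(\mu,\nu;c)$, Weil's bound applies, and $\sum_c c^{-3/2+\varepsilon}<\infty$ closes the argument.

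The genuine gap is the step you yourself flag as the main obstacle: for $\Gamma$ merely commensurable with $SL_2(\ZZ)$, the proposed reduction fails. Passing to $\Gamma\cap SL_2(\ZZ)$ replaces the complete sum over $d\in(\ZZ/c\ZZ)^{\times}$ by a sum over only those residues realized by the subgroup, i.e.\ by an incomplete Kloosterman sum; Weil's bound is a statement about the complete sum, and square-root cancellation over an arbitrary (in general non-congruence-defined) subset of residues is false in general. One cannot ``absorb the index data into implicit constants,'' because the constants would have to carry the lost cancellation, not merely the lost length. Note that the trivial bound $|S|\le c$ lands you exactly on the divergent series $\sum_c c^{-1}$, so a power saving is genuinely required and cannot be finessed. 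The standard proof at this level of generality (the route taken in \cite{MCJD} and its antecedents) is spectral: one attaches to $\Gamma$ a Selberg--Kloosterman zeta function and uses the spectral theory of the Laplacian on $\Gamma\backslash\HH$ (Selberg, Goldfeld--Sarnak) to obtain a power saving for the partial sums $\sum_{0<c<X}S_{\Gamma}(\mu,\nu;c)/c$ on average over $c$, after which partial summation against the $c^{-2}$ Bessel factor gives convergence. Your argument is complete for $SL_2(\ZZ)$ and for congruence groups such as $\Gamma_0(N)$ and $\Gamma_0(p)^{*}$ --- which is all this paper actually needs --- but as a proof of the theorem as stated it has a hole exactly where you predicted it would.
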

	\begin{theorem}{(Structure Result)\\}
		Let $\Gamma<SL_2(\RR)$ a subgroup commensurable with $SL_2(\ZZ)$ and containing $-I$. Then the Fourier expansion of the Rademacher sum of weight $0$, trivial multiplier on $\Gamma$ and index $\mu$ is  $$R_{\Gamma,1,0}^{[\mu]}(\tau)=q^\mu+\sum_{h\nu+\alpha\in\ZZ,\,\nu\geq 0}c_{\Gamma,1,0}(\mu,\nu)q^\nu$$ with $c_{\Gamma,1,0}(-1,0)$ as before.
	\end{theorem}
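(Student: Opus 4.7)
The plan is to exploit a drastic simplification of the regularization factor that is available at weight $w=0$. Since $\Gamma(1-w)=1$ and the defining series collapses to $\gamma(1,x)=1-e^{-x}$, I would first establish the clean identity
$$e(\mu\gamma\tau)\,r_{0}^{[\mu]}(\gamma,\tau)=e(\mu\gamma\tau)-e(\mu\gamma\infty)$$
for every $\gamma\in\Gamma$ with $c(\gamma)\neq 0$. This exhibits the Rademacher regularization as an explicit subtraction of the constant value of each Poincaré summand at its associated cusp, which is precisely what is needed to tame the slow decay. I would then split the sum over $\Gamma_\infty\backslash\Gamma_{K,K^2}$ into the identity coset (where $c=0$ and the convention $r_{0}^{[\mu]}(\mathrm{id},\tau)=1$ makes the contribution exactly $q^\mu$) and the remainder indexed by cosets with $c\neq 0$; using $-I\in\Gamma$ I restrict the latter to $c>0$.

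Next, I would refine the coset index to the double coset decomposition $\Gamma_\infty\backslash\Gamma/\Gamma_\infty$. For a representative $\gamma_0$ with lower row $(c,d_0)$, the right inner $\Gamma_\infty$ action gives left cosets $\gamma_0 T^{hn}$ for $n\in\ZZ$, so the inner piece becomes
$$\sum_{n\in\ZZ}\bigl[e(\mu\gamma_0(\tau+hn))-e(\mu\gamma_0\infty)\bigr].$$
Using the identity $\gamma_0 z-\gamma_0\infty=-1/(c(cz+d_0))$ and expanding the exponential in a Taylor series yields
$$e(\mu a/c)\sum_{k\geq 1}\frac{(-2\pi i\mu)^k}{k!\,c^k}\sum_{n\in\ZZ}\frac{1}{(c(\tau+hn)+d_0)^k},$$
and the inner $n$-sums are evaluated by the Lipschitz summation formula for $k\geq 2$ and by the principal-value identity $\pi\cot(\pi w)$ for $k=1$, with $w=(c\tau+d_0)/(ch)$. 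A direct comparison of the resulting coefficient of $q^\nu$ with the definitions of $K_{\gamma_0,1}(\mu,\nu)$ and $B_{\gamma_0,0}(\mu,\nu)$ shows that each double coset contributes exactly $\frac{1}{h}K_{\gamma_0,1}(\mu,\nu)B_{\gamma_0,0}(\mu,\nu)\,q^\nu$ for $\nu>0$; summing over double cosets reproduces $c_{\Gamma,1,0}(\mu,\nu)q^\nu$. For the constant Fourier mode, the $\tau$-independent part of the $k=1$ cotangent sum contributes half of the expected value, and the explicit $\delta_{\alpha,0}\frac{1}{2}c_{\Gamma,1,0}(\mu,0)$ correction built into the definition of $R^{[\mu]}_{\Gamma,1,0}$ supplies the other half, so the assembled constant term equals $c_{\Gamma,1,0}(\mu,0)$.

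The main obstacle will be justifying the various interchanges of limits, since the underlying Poincaré series is not absolutely convergent and the order of summation prescribed by $\Gamma_{K,K^2}$ (first $|d|<K^2$, then $|c|<K$) is essential; the principal-value cotangent summation used at $k=1$ is where this subtlety is most acute. I would address it by proving a uniform-on-compacta tail estimate, namely that for each fixed $\nu$ the contribution to the $\nu$-th Fourier coefficient from cosets with $|c|>K$ is $O(K^{-1})$. This follows from the convergence of $c_{\Gamma,1,0}(\mu,\nu)$ guaranteed by the previous theorem, together with the $c^{-2}$-decay of $B_{\gamma_0,0}(\mu,\nu)$ and a Weil-type bound on the partial Kloosterman sums appearing after the $n$-summation. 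Once this uniform tail bound is in place, the Lipschitz application commutes with the $K\to\infty$ limit, and the asserted Fourier expansion follows by matching coefficients term by term.
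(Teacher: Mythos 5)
The paper itself offers no proof of this theorem: it is imported verbatim from Cheng--Duncan \cite{MCJD}, so there is nothing internal to compare your argument against. Measured against the standard derivation in that reference, your plan is essentially the right one, and the key computations check out. At $w=0$ one indeed has $\gamma(1,x)=1-e^{-x}$, hence $e(\mu\gamma\tau)r_0^{[\mu]}(\gamma,\tau)=e(\mu\gamma\tau)-e(\mu\gamma\infty)$; the identity coset yields $q^{\mu}$; and writing $\gamma_0 z-\gamma_0\infty=-1/(c(cz+d_0))$, expanding the exponential, and applying Lipschitz summation to $\sum_{n}(c(\tau+hn)+d_0)^{-k}$ produces, after reindexing $k\mapsto k+1$, exactly the series $\sum_{k\ge 0}(2\pi/c)^{2k+2}(-\mu)^{k+1}\nu^{k}/((k+1)!\,k!)$ multiplied by $e(\mu a/c)e(\nu d_0/c)/h$, i.e.\ $\tfrac1h K_{\gamma_0,1}(\mu,\nu)B_{\gamma_0,0}(\mu,\nu)$ as required. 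Your accounting of the constant term is also correct: the $-\pi i$ from the principal-value cotangent at $k=1$ contributes $2\pi^{2}(-\mu)e(\mu a/c)/(hc^{2})$ per double coset, which is precisely half of the corresponding summand of $c_{\Gamma,1,0}(\mu,0)$, and the explicit $\tfrac12 c_{\Gamma,1,0}(\mu,0)$ in the definition of the Rademacher sum supplies the other half.

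The one place where your sketch understates the difficulty is the analytic justification. You frame the obstacle as a tail estimate in $c$, but the genuinely delicate point is the inner sum: the prescription $\Gamma_{K,K^2}$ truncates the $n$-sum at $|d_0+nch|<K^{2}$, which is only approximately symmetric, and the discrepancy between this truncated sum and the principal value $\pi\cot(\pi w)$ at $k=1$ must be shown to vanish \emph{after} summing over all $c<K$ --- this is the entire reason the cutoff is $K^{2}$ in $d$ rather than $K$. Your proposed uniform tail bound, combined with the convergence of $c_{\Gamma,1,0}(\mu,\nu)$ from the preceding theorem and a Weil-type bound on the resulting Kloosterman sums (which gives absolute convergence of $\sum_c c^{1/2+\epsilon}\cdot c^{-2}$), is the correct mechanism, but you should state explicitly that the error being controlled is the truncation error of the $d$-sum, not merely the tail in $c$. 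With that clarification the argument is complete and is the same route taken in the cited source.
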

	There is an interesting relation with Monstrous Moonshine that gives us the desired characterization of our hauptmoduln in terms of McKay-Thompson Moonshine series type
	\begin{theorem}
		If $\Gamma$ is of Moonshine type, then $$R^{[-1]}_{\Gamma,1,0}:X_\Gamma\cong \PP_1(\CC)$$ and the canonical hauptmodul of $X_\Gamma$ is given by $T(\tau)=R^{[-1]}_{\Gamma,1,0}(\tau)-c_{\Gamma,1,0}(-1,0)$ 
	\end{theorem}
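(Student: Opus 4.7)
The plan is to combine the stated convergence and structure theorems for Rademacher sums with the genus zero hypothesis to identify $R^{[-1]}_{\Gamma,1,0}$ with a canonical hauptmodul. First I would invoke the convergence theorem to confirm that $R^{[-1]}_{\Gamma,1,0}(\tau)$ is defined and locally uniformly convergent on $\HH$, and that as a weak mock modular form its shadow is $S^{[-1]}_{\Gamma,1,0} = \frac{1}{\Gamma(1)} R^{[1]}_{\Gamma,1,2}$, which a priori would be a weight $2$ cusp form with trivial multiplier on $\Gamma$. Because $\Gamma$ is of Moonshine type, $X_\Gamma$ has genus zero, so weight $2$ cusp forms with trivial multiplier (which correspond to holomorphic $1$-forms on $X_\Gamma$) vanish. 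Hence the shadow is identically zero and $R^{[-1]}_{\Gamma,1,0}$ is a genuine weakly holomorphic modular function of weight $0$ with trivial multiplier for $\Gamma$.

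Next I would apply the Structure Result to extract the Fourier expansion at $\infty$. For $\Gamma$ of Moonshine type we have $h=1$ and $\alpha=0$, so $\nu$ ranges over non-negative integers and
$$R^{[-1]}_{\Gamma,1,0}(\tau) \;=\; q^{-1} + c_{\Gamma,1,0}(-1,0) + \sum_{\nu \geq 1} c_{\Gamma,1,0}(-1,\nu)\, q^{\nu}.$$
Thus $R^{[-1]}_{\Gamma,1,0}$ has a simple pole at $\infty$, and the translate $T(\tau) = R^{[-1]}_{\Gamma,1,0}(\tau) - c_{\Gamma,1,0}(-1,0)$ has the normalised $q$-expansion $T(\tau) = q^{-1} + O(q)$ required of a canonical hauptmodul at infinity.

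To deduce that $T$ gives the isomorphism $X_\Gamma \cong \PP_1(\CC)$ it remains to verify that $R^{[-1]}_{\Gamma,1,0}$ is holomorphic at every cusp of $\Gamma$ other than $\infty$; once this is established, $T$ is a non-constant meromorphic function on the compact Riemann surface $X_\Gamma$ with a single simple pole, and hence defines a degree one, and therefore biholomorphic, map to $\PP_1(\CC)$. For the Moonshine groups of primary interest here, such as $\Gamma_0(p)^*$, the Fricke involution fuses the cusps $0$ and $\infty$ of $\Gamma_0(p)$ into a single cusp, so there is nothing more to check. In the general case I would re-expand the regularised sum around an arbitrary cusp $\mathfrak{c} = \gamma'\infty$ by grouping cosets of $\Gamma_\infty\backslash\Gamma$ according to the lower row of $\gamma$, and invoke the same convergence estimates that underlie the Rademacher series $c_{\Gamma,1,0}(\mu,\nu)$ to rule out negative Fourier exponents at $\mathfrak{c}$.

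The main obstacle is precisely this last step: the regularising factor $r_w^{[\mu]}(\gamma,\tau)$ has been tailored to tame the Poincaré sum at $\infty$ and the invariance under $\Gamma_\infty$ alone does not directly yield the cusp expansion elsewhere, so one must conjugate by an element sending $\infty$ to $\mathfrak{c}$ and carefully re-sum. Integrality of the Fourier coefficients of $T$, which would identify it with a specific McKay--Thompson series, is a deeper input from monstrous moonshine and I would cite rather than reprove; the theorem as stated only asserts that $T$ is \emph{the} canonical hauptmodul of $X_\Gamma$, which for our purposes means the unique modular function having the stated normalised expansion at $\infty$ and providing the isomorphism with $\PP_1(\CC)$.
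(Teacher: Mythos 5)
The paper states this theorem without proof (it is imported from the cited Cheng--Duncan article), but the discussion immediately preceding it sketches exactly your argument: the shadow $\frac{1}{\Gamma(1)}R^{[1]}_{\Gamma,1,2}$ is a weight-$2$ cusp form with trivial multiplier, hence vanishes on a genus-zero group, so $R^{[-1]}_{\Gamma,1,0}$ is genuinely weakly holomorphic, and the Structure Result supplies the $q^{-1}+c_{\Gamma,1,0}(-1,0)+O(q)$ expansion that the normalisation removes. Your proposal is correct and follows this same route, and you are right to single out the behaviour at the non-infinite cusps as the one step that neither the paper nor its sketch addresses --- that verification is carried out in the cited reference and is the genuinely technical part of the argument.
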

	Let $p|\#\MM$ and consider $\Gamma_0(p)^*$. The associated hauptmodul is given by $$j_p(\tau)=R_{\Gamma_0(p)^*,1,0}^{[-1]}(\tau)-c_{\Gamma_0(p)^*,1,0}(-1,0)$$ To explicitly compute these functions, we need to find representatives for the double cosets involved in the definition. To do so, we first remark that $\Gamma_0(p)^*=\Gamma_0(p)\sqcup\Gamma_0(p)W_p$ where this union is a disjoint one. Also, it is easily seen that $\Gamma_0(p)^*_\infty=\Gamma_0(p)_\infty$. Hence, to describe the first coset, we let $\gamma\in \Gamma_0(p)$ with $c\neq 0$ and $\gamma_m=\m{1&m\\0&1}$. Then $$\pm \gamma_m\gamma\gamma_n=\pm\m{a+mcp&n(a+mcp)+b+md\\cp&ncp+d}$$ We can see that for $c\geq 1$ and $d [cp]$, we have $a$ uniquely determined by $ad\equiv 1[cp]$ and $b$ uniquely determined by $b=\frac{ad-1}{cp}$. It follows that the first double coset is given by $$\bigcup_{c=1}^\infty\bigcup^{cp}_{d=1,\,(d,cp)=1,\,ad\equiv 1[cp]}\Gamma_0(p)_\infty \m{a&*\\cp&d}\Gamma_0(p)_\infty$$ 
	For the second coset, let $\gamma=\m{a&-b\\-cp&d}$. Then $$\pm\gamma_m\gamma W_p\gamma_n=\m{(md-b)\sqrt{p}&n(md-b)\sqrt{p}-\frac{a}{\sqrt{p}}+mc\sqrt{p}\\d\sqrt{p}&(nd+c)\sqrt{p}}$$
	Letting $d\geq 1$ with $(d,p)=1$ and $b[d]$, we can see that $c$ is determined by $-bcp\equiv 1[d]$ and $a$ is determined by $a=\frac{bcp+1}{d}$. This gives us for the coset representatives $$\bigcup_{d=1,(d,p)=1}^\infty\bigcup_{c=1,(c,d)=1,-bcp\equiv 1[d]}^d\Gamma_0(p)_\infty \m{-b\sqrt{p}&*\\d\sqrt{p}&c\sqrt{p}}\Gamma_0(p)_\infty$$
	Note that in order to use the definition of the Bessel function given above, we had to pick our representatives to have lower left entry positive which explained our choice up here. 
	\begin{theorem}
		The hauptmodul for $\Gamma_0(p)^*$ is $$j_p(\tau)=R^{[-1]}_{\Gamma,1,0}-c_{\Gamma,1,0}(-1,0)=q^{-1}+\sum_{\nu\geq 1}c_{\Gamma,1,0}(-1,\nu)q^\nu$$ with $$c_{\Gamma,1,0}(-1,\nu)=\sum_{c=1}^\infty\sum_{d=1,(d,cp)=1,ad=1[cp]}^{cp}e(-\frac{a}{cp})e(\nu\frac{d}{cp})\sum_{k\geq 0}(\frac{2\pi}{cp})^{2k+2}\frac{\nu^k}{\Gamma(k+2)k!}+$$ $$\sum_{d=1,(d,p)=1}^\infty \sum_{c=1,(c,d)=1,-bcp\equiv 1[d]}^d e(\frac{b}{d})e(\nu\frac{c}{d})\sum_{k\geq 0}(\frac{2\pi}{d\sqrt{p}})^{2k+2}\frac{\nu^k}{\Gamma(k+2)k!}$$ $$=\sum_{c=1}^\infty K(\nu,-1,cp)\Big (\frac{2\pi I_1(\frac{4\pi\sqrt{\nu}}{cp})}{c\sqrt{\nu}p}\Big)+\sum_{d=1,(d,p)=1}^\infty K(\nu,-1,d)\Big (\frac{2\pi I_1(\frac{4\pi\sqrt{\nu}}{d\sqrt{p}})}{d\sqrt{\nu p}}\Big)$$ where $K$ is the Kloosterman sum and $I_1$ the modified Bessel function of the first kind.
	\end{theorem}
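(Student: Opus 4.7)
The plan is to obtain the stated identity in three layers: the first equality is a direct invocation of the preceding Moonshine-type theorem applied to $\Gamma=\Gamma_0(p)^*$; the second (the $q$-expansion) is an application of the Structure Result; and the explicit shape of the Fourier coefficient $c_{\Gamma,1,0}(-1,\nu)$ will come from plugging the already-computed double coset representatives into the series definition of $c_{\Gamma,\psi,w}(\mu,\nu)$.

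More concretely, first I would note that $\Gamma_0(p)^*$ is commensurable with $SL_2(\ZZ)$, contains $-I$, and has width $h=1$ at $\infty$ with trivial multiplier, so $\alpha=0$. The prior theorem for groups of Moonshine type gives $j_p(\tau)=R^{[-1]}_{\Gamma_0(p)^*,1,0}(\tau)-c_{\Gamma_0(p)^*,1,0}(-1,0)$. The Structure Result then supplies the Fourier expansion
\[
R^{[-1]}_{\Gamma_0(p)^*,1,0}(\tau)=q^{-1}+\sum_{\nu\geq 0}c_{\Gamma_0(p)^*,1,0}(-1,\nu)\,q^{\nu},
\]
and subtracting the constant $c_{\Gamma,1,0}(-1,0)$ kills the $\nu=0$ term, producing exactly the displayed series.

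The main computational step is identifying $c_{\Gamma,1,0}(-1,\nu)$ with the two infinite sums in the theorem. I would use the disjoint decomposition $\Gamma_0(p)^*=\Gamma_0(p)\sqcup\Gamma_0(p)W_p$ and the fact that $\Gamma_0(p)^*_\infty=\Gamma_0(p)_\infty$ to split the defining double-coset sum
\[
c_{\Gamma,1,0}(-1,\nu)=\lim_{K\to\infty}\sum_{\gamma\in\Gamma_\infty\backslash\Gamma_K^\times/\Gamma_\infty}K_{\gamma,1}(-1,\nu)\,B_{\gamma,0}(-1,\nu)
\]
into a contribution from $\Gamma_0(p)$ and one from $\Gamma_0(p)W_p$. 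For the first piece I would use the representatives $\m{a&\ast\\cp&d}$ with $c\ge 1$, $1\le d\le cp$, $(d,cp)=1$, $ad\equiv 1\,[cp]$; for the second I would use $\m{-b\sqrt p&\ast\\d\sqrt p&c\sqrt p}$ with $d\ge 1$, $(d,p)=1$, $1\le c\le d$, $(c,d)=1$, $-bcp\equiv 1\,[d]$. In both cases the lower-left entry $c(\gamma)>0$, so the series definition of $B_{\gamma,0}$ applies directly; with $w=0$ and $\mu=-1$ one obtains
\[
B_{\gamma,0}(-1,\nu)=\sum_{k\ge 0}\Bigl(\frac{2\pi}{c(\gamma)}\Bigr)^{\!2k+2}\frac{\nu^k}{\Gamma(k+2)\,k!}.
\]
The Kloosterman factor $K_{\gamma,1}(-1,\nu)=e(-a/c(\gamma))\,e(\nu d/c(\gamma))$ then assembles, over the inner sum on $d$ (respectively $c$), into the classical Kloosterman sum $K(\nu,-1,cp)$ (respectively $K(\nu,-1,d)$).

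Finally, I would recognize the inner Bessel-type power series by comparing with $I_1(x)=\sum_{k\ge 0}\frac{(x/2)^{2k+1}}{k!(k+1)!}$: a short manipulation shows
\[
\sum_{k\ge 0}\Bigl(\frac{2\pi}{N}\Bigr)^{\!2k+2}\frac{\nu^k}{\Gamma(k+2)\,k!}=\frac{2\pi I_1\!\bigl(4\pi\sqrt{\nu}/N\bigr)}{N\sqrt{\nu}},
\]
which I apply with $N=cp$ in the first sum and $N=d\sqrt p$ in the second, yielding exactly the closed form in the theorem. The main obstacle I anticipate is bookkeeping: ensuring that the chosen representatives genuinely parametrize $\Gamma_\infty\backslash\Gamma_K^\times/\Gamma_\infty$ on each piece (no overlaps, no missing classes) and that no cross-contribution from the $\Gamma_0(p)W_p$ coset reshuffles with the $\Gamma_0(p)$ coset under left/right translations by $\Gamma_\infty$. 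Everything else—convergence, the Bessel identification, the Kloosterman-sum recognition—is routine once the coset decomposition is nailed down.
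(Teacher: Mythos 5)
Your proposal is correct and follows essentially the same route as the paper: the paper likewise invokes the Moonshine-type theorem for the first equality, the Structure Result for the $q$-expansion, and then substitutes the double-coset representatives (computed from the decomposition $\Gamma_0(p)^*=\Gamma_0(p)\sqcup\Gamma_0(p)W_p$ with $\Gamma_0(p)^*_\infty=\Gamma_0(p)_\infty$) into the Rademacher series, recognizing the Bessel and Kloosterman expressions exactly as you do. The bookkeeping concern you flag about the double-coset parametrization is precisely the content of the paragraph preceding the theorem in the paper, so nothing further is needed.
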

	We now present a generalization of Zagier's work on singular moduli for $\Gamma_0(p)^*$. We would like to study the quotient of some set of quadratic forms by $\Gamma_0(p)^*$ in order to obtain an object in bijection with $\Gamma\backslash Q_d$, $\Gamma=SL_2(\ZZ)$, so that we can define in an analogous way $H(d)$ and $\bold{t}(d)$. We let $d\geq 0,-d\equiv \square[4p]$ and we consider roots of quadratic forms $Q(X,Y)=[a,b,c]$ with $a\equiv 0[p]$ and $(a,b,c,p)=1$. Replacing $a$ by $ap$ in the above leads to $$Q(X,Y)=apX^2+bXY+cY^2$$ with discriminant $-d=b^2-4apc$ which gives $-d\equiv \square [4p]$. The equivalence class of a root $\alpha_Q$ for $\Gamma_0(p)$ has another invariant apart from its equivalence class in $\Gamma\backslash Q_d$, it is the value of $b[2p]$. If we fix this class, i.e. a $\beta[2p]$ with $-d\equiv \beta^2[4p]$ and consider $Q=[ap,b,c]$ with $b\equiv \beta[2p]$, then the number of $\Gamma_0(p)$-equivalence class of such a $Q$ counted with multiplicity $1/w_Q$ is $H(d)$. Moreover, $$\Gamma_0(p)\backslash Q_{d,p,\beta}\cong \Gamma\backslash Q_d$$
	However, since $j_p$ is the hauptmodul of $\Gamma_0(p)^*$, we can get rid of the $\beta$-dependence by restricting to the quotient of $\HH$ by $\Gamma_0(p)^*$ since the trace is independent of $\beta$ there. Hence, by restricting to this case, we get $$\Gamma_0(p)^*\backslash Q_{d,p}\cong \Gamma\backslash Q_d$$
	It follows that we can write in our setting $$\bold{t}(d)=\sum_{Q\in \Gamma_0(p)^*\backslash Q_{d,p}}\frac{j_p(\alpha_Q)}{w_Q}$$ with $$Q_{d,p}=\{[ap,b,c] : -d=b^2-4apc,-d\equiv \square [4p]\}$$
	We have that $\Gamma_0(p)^*\acts Q_{d,p}$ in the usual way with $W_p\cdot Q(X,Y)=[cp,-b,a]$. Now, we are interested in studying the generating function of the trace and relating it to a modular form of weight $3/2$ on $M_{3/2}^{!,+}(\Gamma_0(4p))$. A first observation is that $$\sum_{d\geq 1,-d\equiv \square[4p]}\bold{t}(d)q^d$$ can be rewritten as $$\sum_{4np-r^2=d\geq 1}\bold{t}(4np-r^2)q^d$$ and remembering the beautiful isomorphism \cite{ZaEi}
	$$J_{2,p}(\Gamma^J)\cong M^{+}_{3/2}(\Gamma_0(4p))$$
	we can see that the natural language to study the above generating function would be the language of Jacobi Forms since Jacobi forms of even weight and prime index have Fourier coefficients $c(n,r)$ depending only on $4np-r^2$. 
	The idea here will be to construct an appropriate Jacobi form and to project it into the Kohnen plus-space via identification of Fourier coefficients, namely $\sum_{n,r}c(n,r)q^n\zeta^r\mapsto \sum_dc(d)q^d,\,d=4np-r^2$. First some preliminary results
	\begin{proposition}
		$\exists ! \phi_{D,p}\in J_{2,p}^!(\Gamma^J)$ with Fourier coefficients $B(D,4np-r^2)$ satisfying $B(D,-D)=1$, $B(D,d)=0$ for $d=4pn-r^2<0$ and $d\neq -D$. 
	\end{proposition}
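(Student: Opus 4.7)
The plan is to transfer the problem to the language of weight $3/2$ modular forms via the Eichler--Zagier isomorphism $J_{2,p}^!(\Gamma^J)\cong M_{3/2}^{!,+}(\Gamma_0(4p))$ extended to weakly holomorphic forms. Under this correspondence, a weak Jacobi form whose Fourier coefficients $c(n,r)$ depend only on the discriminant $d=4np-r^2$ is identified with a weight $3/2$ weakly holomorphic form in the Kohnen plus-space whose $q^d$ coefficient equals $c(n,r)$. Constructing $\phi_{D,p}$ is therefore equivalent to constructing a unique $\tilde{g}_{D,p}\in M_{3/2}^{!,+}(\Gamma_0(4p))$ whose Fourier expansion has principal part exactly $q^{-D}$, namely
\[
\tilde{g}_{D,p}(\tau)=q^{-D}+\sum_{d\geq 0,\,-d\equiv\square[4p]}B(D,d)q^d.
\]

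For uniqueness, the difference of two candidates has trivial principal part and thus lies in the holomorphic plus-space $M_{3/2}^+(\Gamma_0(4p))$. By the Shimura--Kohnen correspondence this space is isomorphic to a space of weight $2$ forms attached to $\Gamma_0(p)$; for the moonshine primes under consideration this image vanishes on the relevant piece, so the difference is forced to be zero. This is the direct generalization of the argument given earlier for $\Gamma_0(4)$, where uniqueness of $g_D$ followed from $\dim M_2(\Gamma)=0$.

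For existence I would reproduce the inductive construction used for the $g_D$ on $\Gamma_0(4)$, but now on $\Gamma_0(4p)$. The base case is a weight $3/2$ weakly holomorphic form analogous to Zagier's $g_1=\theta_1(\tau)E_4(4\tau)/\eta(4\tau)^6$. On $\Gamma_0(4p)$ an explicit $\eta$-quotient description is not immediate, so I would build it instead from a weight $3/2$ Rademacher/Poincar\'e series of index $-D_0$, leveraging the machinery already developed in the preceding section to produce $j_p$ itself. Once a base form with the smallest admissible pole is in hand, I would bootstrap to arbitrary $D$ by multiplying by $j_p(4\tau)$ (a hauptmodul for a genus-$0$ overgroup, which shifts the pole) and subtracting multiples of previously constructed $\tilde{g}_{D',p}$, $D'<D$, to kill extraneous negative-index coefficients and restore the normalization $B(D,-D)=1$.

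The main obstacle is the uniqueness step: the holomorphic Kohnen plus-space on $\Gamma_0(4p)$ is not automatically zero for all moonshine primes, and a careful Shimura--Kohnen dimension count is needed to rule out spurious additions. A secondary difficulty is the construction of a workable base form: unlike on $\Gamma_0(4)$, one loses the miraculous $\eta$-quotient identity, and the Rademacher-sum route must be checked to genuinely land in the plus-space with the correct principal part. Both difficulties are compatible with the Jacobi-form point of view, which is why the statement is formulated in $J_{2,p}^!(\Gamma^J)$: the index-$p$ condition automatically enforces the plus-space congruence via $d=4np-r^2$, and invariance under the Jacobi group encodes the $\Gamma_0(4p)$ transformation law without having to verify it by hand.
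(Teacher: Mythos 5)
Your overall strategy is reasonable, but the uniqueness step --- which you yourself flag as the main obstacle --- is left open, and that step is precisely the entire content of the paper's proof of this proposition. The paper stays in the Jacobi-form category: the difference of two candidates has all negative-discriminant coefficients equal to zero, hence is a \emph{holomorphic} Jacobi form of weight $2$ and index $p$, and $\dim J_{2,p}(\Gamma^J)=\dim M_2(\Gamma_0(p)^*)=0$ by the table on p.~132 of Eichler--Zagier. The point you are missing is that the plus-space $M_{3/2}^{+}(\Gamma_0(4p))\cong J_{2,p}(\Gamma^J)$ corresponds under the Shimura--Kohnen lift to weight-$2$ forms on the \emph{Fricke group} $\Gamma_0(p)^*$ (the relevant Atkin--Lehner eigenspace), not to all of $M_2(\Gamma_0(p))$; the latter is typically nonzero for moonshine primes (e.g.\ $S_2(\Gamma_0(11))\neq 0$ since $X_0(11)$ has genus $1$), whereas $M_2(\Gamma_0(p)^*)=0$ because $X_0(p)^*$ has genus zero (no cusp forms) and a single cusp (no holomorphic weight-$2$ Eisenstein series). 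Without pinning the correspondence to the Fricke-invariant piece, your dimension count does not close, so as written the uniqueness argument has a genuine gap.

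On existence your route departs from the paper's, which invokes the structure theorem $J^!_{\mathrm{even},*}=M^!_*(\Gamma)[a,b]$ with $a=\tilde\phi_{-2,1}$, $b=\tilde\phi_{0,1}$, and writes $\phi_{D,p}=\sum_{\nu}f_{p,\nu}\,a^{\nu}b^{p-\nu}$ with $f_{p,\nu}\in M^!_{2\nu+2}(SL_2(\ZZ))$ determined by the prescribed principal part; this produces all the $\phi_{D,p}$ at once with no base case to construct. Your bootstrap could in principle be made to work, but the multiplier $j_p(4\tau)$ is the wrong one: its $q$-expansion is supported on exponents divisible by $4$, so multiplication shifts discriminants by multiples of $4$ and destroys the plus-space condition $-d\equiv\square\ [4p]$; the correct analogue of the level-$4$ algorithm is multiplication by $j(4p\tau)$ (exactly as the paper does when building the $f_{d,p}$), or an index-preserving weight-$0$ multiplier on the Jacobi side. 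The Rademacher-sum base case would also require a separate verification that the resulting form lands in the plus-space with the correct principal part, which the algebraic construction gets for free.
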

	\begin{proof}
		Suppose there exists two such forms, then their difference would be a holomorphic Jacobi form of weight 2 and index $p$. Following the fact that $$dim(J_{2,p}(\Gamma^J))\cong dim(M_2(\Gamma_0(p)^*))$$ and according to the table page 132 \cite{ZaEi}, we get $dim(M_2(\Gamma_0(p)^*))=0$.
	\end{proof}
	Let $D=1$, then from the isomorphism given above we get that $\phi_{1,p}\mapsto q^{-1}-2+\sum_{d\geq 1,-d=\square[4p]}B(1,d)q^d\in M_{3/2}^{!,+}(\Gamma_0(4p))$ and by the above, such a form is unique. More generally, if we denote by $g_{D,p}$ the modular form correpsonding to $\phi_{D,p}$, we have that $$g_{D,p}(\tau)=q^{-D}+B(D,0)+\sum_{d\geq 1,-d=\square[4p]}B(D,d)q^d$$ is the unique such form with $B(D,0)=-2$ if $D$ is a square and $0$ otherwise.\\
	To express canonically all the $\phi_{D,p}\in J_{2,p}^!(\Gamma^J)$ and thus simultaneously showing their existence (and so the existence of the $g_{D,p}$), we recall that $J_{even,*}^!$ is the free polynomial algebra over $M^!_*(\Gamma)=\CC[E_4,E_6,\Delta^{-1}]$ on two generators $$a=\tilde{\phi_{-2,1}}=\phi_{10,1}/\Delta,\, b=\tilde{\phi_{0,1}}=\phi_{12,1}/\Delta$$ with $\phi_{k,1}\in J_{k,1}^{cusp}$.
	This implies that $\phi_{D,p}\in J^!_{2,p}(\Gamma)$ can be written as a linear combination of the form $a^ib^{p-i}$ over $M_*^!$ and more precisely as $$\phi_{D,p}(\tau,z)=\sum_{\nu=1}^p f_{p,\nu}(\tau)a^\nu b^{p-\nu}$$
	with $f_{p,\nu}\in M^!_{2\nu +2}(SL_2(\ZZ))$ having constant term $\frac{(-1)^{\nu-1}}{12^{p-1}}{p-1 \choose \nu-1}$.
	\begin{theorem}{\cite{Za}}
		For $p=2,3,5$, we have $$\bold{t}(j_p,d)=-B^{(p)}(1,d),\, \forall d$$ with $B^{(p)}(1,d)$ the coefficients $g_{1,p}\in M_{3/2}^!(\Gamma_0(4N))$.
	\end{theorem}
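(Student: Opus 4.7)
\medskip
\noindent\textbf{Proof proposal.} My plan is to mimic Zagier's first proof (the one proceeding via recursion relations on Fourier coefficients), working with $j_p$ in place of $J$ and the Fricke group $\Gamma_0(p)^*$ in place of $SL_2(\ZZ)$. The starting point is the Fricke analogue of the modified Hilbert class polynomial,
$$\mathcal{H}^{(p)}_d(X) = \prod_{Q \in \Gamma_0(p)^*\backslash Q_{d,p}} (X - j_p(\alpha_Q))^{1/w_Q},$$
together with its logarithmic derivative
$$\Lambda^{(p)}_d(\tau) = -\frac{1}{2\pi i}\frac{d}{d\tau}\log \mathcal{H}^{(p)}_d(j_p(\tau)) = H(d) + \bold{t}(j_p, d)\,q + O(q^2),$$
where the expansion follows from the normalization $j_p = q^{-1} + O(q)$. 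A standard residue analysis shows that $-2\pi i\,\Lambda^{(p)}_d$ is a meromorphic weight-$2$ modular form on $\Gamma_0(p)^*$, holomorphic at the cusp, with simple poles of residue $1/w_Q$ at each $\alpha_Q$; for $p \in \{2,3,5\}$ the space of holomorphic weight-$2$ forms on $\Gamma_0(p)^*$ vanishes, so $\Lambda^{(p)}_d$ is pinned down uniquely by its polar data.

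Next, for each $n \geq 1$, I would construct the analogue of Zagier's auxiliary function,
$$S^{(p)}_n(\tau) = G_p(\tau)\sum_{M \in \Gamma_0(p)^*\backslash \mathcal{M}^{(p)}_n} \frac{(E_4\vert M)(\tau)}{j_p(\tau) - j_p(M\tau)},$$
with $G_p$ playing the role of $E_4E_6/\Delta$ on $\Gamma_0(p)^*$ and $\mathcal{M}^{(p)}_n$ a set of integral matrices of determinant $n$ commensurable with $\Gamma_0(p)^*$. A residue computation entirely parallel to Zagier's gives $S^{(p)}_n(\tau) = \tfrac{1}{2}\sum_{r^2 < 4np}(np - r^2)\,\Lambda^{(p)}_{4np-r^2}(\tau)$. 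Expanding $S^{(p)}_n$ in $q$ in two different ways---once through this identity (yielding a weighted sum of the traces $\bold{t}(j_p, 4np - r^2)$ in the coefficient of $q$), and once via explicit coset representatives for $\Gamma_0(p)^*\backslash \mathcal{M}^{(p)}_n$ (yielding elementary divisor expressions)---produces a closed recursion that determines $\{\bold{t}(j_p, d)\}_d$ uniquely from its smallest values. A parallel computation using the Eichler--Zagier isomorphism $J^!_{2,p}(\Gamma^J) \cong M^{!,+}_{3/2}(\Gamma_0(4p))$ and the explicit expansion $\phi_{1,p} = \sum_{\nu=1}^{p} f_{p,\nu}(\tau)\,a^\nu b^{p-\nu}$ of the Jacobi form $\phi_{1,p}$ in the generators of $J^!_{\mathrm{even},*}$ then shows that $\{-B^{(p)}(1,d)\}_d$ satisfies the same recursion, with matching initial data given by the pole term $-B^{(p)}(1,-1) = -1$ and the constant term $-(-2) = 2$, which concludes the argument.

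The main obstacle will be the coset decomposition $\Gamma_0(p)^*\backslash \mathcal{M}^{(p)}_n$ entering the definition of $S^{(p)}_n$: because $\Gamma_0(p)^* = \Gamma_0(p) \sqcup \Gamma_0(p)\,W_p$, the relevant orbits split into two families according to the $W_p$-action, mirroring the two-piece structure already used in the earlier Rademacher expansion of $j_p$. Controlling both families simultaneously and producing a clean residue identity for $S^{(p)}_n$ requires a $W_p$-equivariant parametrization of $\mathcal{M}^{(p)}_n$ compatible with the $\Gamma_0(p)^*$-action $W_p\cdot[ap,b,c] = [cp,-b,a]$ on $Q_{d,p}$, together with a careful analysis of how the Fricke involution modifies the contribution of each coset to the constant and linear terms of $S^{(p)}_n$. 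The restriction $p \in \{2,3,5\}$ enters precisely through the vanishings $\dim M_2(\Gamma_0(p)^*) = 0$ and $\dim J_{2,p}(\Gamma^J) = 0$ needed for both $\Lambda^{(p)}_d$ and $g_{1,p}$ to be uniquely characterized by their leading data.
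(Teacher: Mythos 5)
Your plan---transplanting Zagier's recursion proof to $\Gamma_0(p)^*$---is a legitimate route in principle, but note first that it is genuinely different from what this paper does: the theorem is quoted here from \cite{Za} without proof, and the closest the paper comes to establishing it is the final part of Section II, where $f_{d,p}$ is lifted to a vector-valued form for the Weil representation of the lattice $M$, Borcherds' Grassmannian theorem is applied, and the divisor of $\Psi_{f_{d,p}}$ is matched with that of $\mathcal{H}_{d,p}(j_p)$; this shows for \emph{all} $p\mid\#\MM$ that $\sum_{n,r}\mathbf{t}^{(p)}(4pn-r^2)q^n\zeta^r$ is a weakly holomorphic Jacobi form of weight $2$ and index $p$, and the identity $\mathbf{t}(j_p,d)=-B^{(p)}(1,d)$ then drops out of the uniqueness of $\phi_{1,p}$ (i.e.\ $\dim J_{2,p}=0$) once the principal parts are matched. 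That route buys uniformity in $p$ and avoids residue computations entirely; yours, if completed, would be more elementary and would also yield explicit recursions for $\mathbf{t}(j_p,d)$.

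As written, however, your sketch has a concrete gap at its central step, the identity $S^{(p)}_n=\tfrac12\sum_{r^2<4np}(np-r^2)\Lambda^{(p)}_{4np-r^2}$. A matrix of trace $r$ and determinant $n$ has fixed points of discriminant $r^2-4n$, so a sum over a set $\mathcal{M}^{(p)}_n$ of matrices ``of determinant $n$'' cannot produce poles at the CM points of discriminant $-(4np-r^2)$ indexing $Q_{d,p}$; you need matrices of determinant $np$ with lower-left entry divisible by $p$ (or their Fricke-normalized versions with entries involving $\sqrt{p}$), a $\Gamma_0(p)^*$-stable coset decomposition splitting along $\Gamma_0(p)\sqcup\Gamma_0(p)W_p$, and a rederivation of the weights $np-r^2$ from the residues $\overline{\lambda}^{3}/(\lambda-\overline{\lambda})$ in that setting. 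Likewise $G_p$ is left unspecified: you need a weight-$4$ form on $\Gamma_0(p)^*$ with $-j_p'$ divisible by it in the appropriate sense and whose zeros at the elliptic points of $X_0(p)^*$ do not introduce spurious poles of $S^{(p)}_n$, exhibited separately for $p=2,3,5$. The parts you do justify---the vanishing of $M_2(\Gamma_0(p)^*)$ and of $J_{2,p}$, and the matching of the principal part $q^{-1}$ and constant term---are correct, but without the residue identity the recursion, and hence the whole uniqueness argument, is not established.
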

	We are then left with showing that for the other values of $p$ dividing the order of the monster group, we still have that the generating series of traces of singular moduli is a modular form of weight $3/2$. We will prove this by applying Borcherds'theorem about automorphic forms with singularities on Grassmannian to $\Gamma_0(p)^*$ but before that we investigate if the Zagier duality between weight $3/2$ and $1/2$ still exists in level $4p$. Let $f_{d,p}\in M_{1/2}^{!,+}(\Gamma_0(4p))$ be such that $$f_{d,p}(\tau)=q^{-d}+\sum_{D>0,D=\square [4p]}A(D,d)q^D$$ We claim that such a form is unique and to prove that, we first prove the duality and then from the uniqueness of the $g_{D,p}$ and their existence given by the structure theorem of Jacobi forms, we will have that such $f_{d,p}$'s exist and are unique.
	\begin{theorem}
		\label{duality}
		$A(D,d)=-B(D,d)$
	\end{theorem}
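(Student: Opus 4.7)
The plan is to imitate the level-4 Zagier duality proof given earlier, substituting the Fricke group $\Gamma_0(p)^*$ for $SL_2(\ZZ)$. Form the product $h = f_{d,p}\, g_{D,p}$; since the weights $1/2$ and $3/2$ add to $2$, we have $h \in M_2^!(\Gamma_0(4p))$. The next step is to apply the operator $U_4$ and then argue that the plus-space conditions on $f_{d,p}$ and $g_{D,p}$ force $h|_{U_4}$ to be invariant under the Fricke involution $W_p$, so that $h|_{U_4} \in M_2^!(\Gamma_0(p)^*)$. Checking this invariance is the main technical obstacle: the plus-space support conditions --- namely that the nonzero Fourier coefficients of $f_{d,p}$ live at $D$ with $D$ a square mod $4p$, and those of $g_{D,p}$ at $d$ with $-d$ a square mod $4p$ --- are calibrated precisely to match the Atkin-Lehner / Fricke eigenvalues after $U_4$, in the spirit of the Kohnen-Zagier correspondence. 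One would verify this directly by tracking the transformation of $f_{d,p}\, g_{D,p}$ under a chosen set of coset representatives for $U_4$ and for $W_p$.

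Granting the descent, the proof concludes by a geometric argument on $X_0(p)^* \cong \PP_1(\CC)$. Because the Fricke involution identifies the two cusps $0$ and $\infty$ of $\Gamma_0(p)$, the curve $X_0(p)^*$ has a single cusp (at infinity). Thus any $F \in M_2^!(\Gamma_0(p)^*)$ gives a meromorphic $1$-form $F(\tau)\,d\tau$ on $\PP_1(\CC)$ with poles concentrated only at that single point; the residue theorem then forces this residue to vanish, which in $q$-expansion is exactly the statement that the constant Fourier coefficient of $F$ is zero. Equivalently, every such $F$ is an exact $\tau$-derivative of a polynomial in the hauptmodul $j_p$, since $M_0^!(\Gamma_0(p)^*) = \CC[j_p]$.

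Finally, compute the constant term of $h$ directly from the Fourier expansions of its two factors. The only pairs of exponents summing to zero are $(-d, +d)$, which contributes $B(D,d)$, and $(+D, -D)$, which contributes $A(D,d)$; all other pairs land at a nonzero exponent, since $-d-D < 0$ while any pair of strictly positive indices coming from the tails of the two expansions sums to a positive number. The $U_4$ operator preserves the constant Fourier coefficient, so combining with the geometric vanishing above yields $A(D,d) + B(D,d) = 0$, which is the claimed duality.
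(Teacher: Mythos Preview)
Your overall strategy---form the weight-$2$ product $h=f_{d,p}\,g_{D,p}$ on $\Gamma_0(4p)$, descend to a curve with a single cusp, and read off $A(D,d)+B(D,d)=0$ from the vanishing of the constant Fourier coefficient---is the right one and matches the paper in spirit. The gap is in the descent step. Applying $U_4$ alone to a level-$4p$ form does not land you on $\Gamma_0(p)$ for odd $p$: since $4\mid 4p$ but $4\nmid 2p$, the standard level-lowering for $U_2$ gives only $h|_{U_4}\in M_2^!(\Gamma_0(2p))$, and $X_0(2p)$ has four cusps. Even after quotienting by an Atkin--Lehner involution you do not reach a single-cusp curve, so your residue argument does not apply as written. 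You correctly flag the Fricke-invariance check as the ``main technical obstacle,'' but in fact the prior assertion---that $U_4$ already drops the level to $p$---is the step that fails without further input from the plus-space structure. Such input may exist (control of the expansions of $f_{d,p}$ and $g_{D,p}$ at the other cusps of $X_0(4p)$), but supplying it is real work, not the routine verification your sketch suggests.

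The paper takes a cleaner route: it applies $U_{4p}$ rather than $U_4$ and argues that the result lies in $M_2^!(SL_2(\ZZ))$, where every weakly holomorphic form is a polynomial in $j'(\tau)$ and therefore has zero constant term. This bypasses $\Gamma_0(p)^*$ entirely and avoids both the level-$2p$ obstruction and the Fricke-invariance check. Your computation of the constant term of $h$ and the observation that $U$-operators preserve it are both correct; only the target of the descent needs to be changed.
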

	\begin{proof}
		Exercise $5.3.1$ of \cite{DS} says that for $f\in M_k(\Gamma_0(N))$ and $n\in \ZZ^+$, we have $T_n(f)=\sum_{\gamma\in \tilde{M_n}}f|_k(\gamma)$ with $$S_n=\{d:d|n,gcd(n/d,N)=1\}$$ and $$\tilde{M_n}=\cup_{d\in S_n}\cup_{j=0}^{d-1}\begin{pmatrix}n/d&j\\0&d\end{pmatrix}$$ If $n=4p$, we get that $$\tilde{M_{4p}}=\bigcup_{j=0}^{4p-1}\begin{pmatrix}1&j\\0&4p\end{pmatrix}$$ Hence, if we let  $$fg|_{U(4p)}=\sum_{\ZZ_{4p}}fg(\frac{\tau+j}{4p})$$ for $f,g$ of weight $1/2,3/2$ respectively, we get $$\sum_{\ZZ_{4p}}fg(\frac{\tau+j}{4p})=\sum_{\ZZ_{4p}}(4p)fg|_2\begin{pmatrix}1&j\\0&4p\end{pmatrix}=4pT_{4p}(fg)$$ Also, by definition of Hecke operators (which is the same if we allow the pole at infinity), we have $$T_{4p}(fg)=\sum_{\mu\in \Gamma\backslash M_{4p}}fg|_2(\mu\tau)$$ and this function is $\Gamma-$invariant. It follows that $fg|_{U(4p)}$ is $SL_2(\ZZ)$ invariant and because it is a weakly holomorphic modular form of weight $2$, then it has to be a polynomial in $j'$. The constant term of $j'$ is 0 and we obtain $A(D,d)+B(D,d)=0$. 
	\end{proof}
	
	In order to compute a basis for each of these weights we explicit the previously seen algorithm. The idea is that we will construct a basis of weight $1/2$ forms in the Kohnen plus-space, then a basis of weight $3/2$ forms in the Kohnen plus-space using the structure theorem of Jacobi forms and the isomorphism between these two spaces. One can also be satisfied with the fact that knowing the $f_d's$ immediatly implies knowing the $g_D's$ by the Zagier duality but The algorithm goes as follows. We pick a $p|\#\MM$ and we compute all quadratic residues modulo $4p$. We consider the list $$\{f_d(\tau)=q^{-d}+\sum_{D>0,D\equiv \square[4p]}A(D,d)q^D : -d\equiv\square[4p]\}$$ These are the forms that we will use to construct all other $f_d's$ by induction. To do so, pick $d\geq 4p,-d\equiv\square[4p]$ not one of the quadratic residue modulo $4p$ of the above list and consider the associated form $f_{d-4p}$. Multiply $f_{d-4p}$ by $j(4p\tau)$ to get a form in the plus-space with leading coefficient $q^{-d}$. Substract linear combinations of the $f_{d'}$ with $0\leq d'<d$. This produces a new element of the basis. To construct explicitly the $f_d's$, we will use the Rankin-Cohen bracket. Call $f_0=\theta(\tau)$ the usual theta function and note that it is a form in $M_{1/2}(\Gamma_0(4p))^{!,+}$. From it we compute the forms $[f_0(\tau),E_{12-2n}(4p\tau)]_n/\Delta(4p\tau)$ and do linear combinations of all of them to compute the initial $f_d$'s (sometimes it will be necessary to compute Rankin-Cohen Bracket with $f_d's$ other than $f_0$). These Rankin-Cohen brackets are $$[\theta(\tau),E_{10}(4p\tau)]_1=\frac{1}{2}\theta(\tau)E'_{10}(4p\tau)=10\theta'(\tau)E_{10}(4p\tau)$$ $$[\theta(\tau),E_{8}(4p\tau)]_2=36\theta''(\tau)E_8(4p\tau)-\frac{27}{2}\theta'(\tau)E'_8(4p\tau)+\frac{3}{8}\theta(\tau)E''_8(4p\tau)$$ $$[\theta(\tau),E_{6}(4p\tau)]_3=-56\theta'''(\tau)E_6(4p\tau)+70\theta''(\tau)E'_6(4p\tau)-15\theta'(\tau)E_6''(4p\tau)+\frac{5}{16}\theta(\tau)E'''_6(4p\tau)$$ $$[\theta(\tau),E_{4}(4p\tau)]_4=35\theta''''(\tau)E_4(4p\tau)-\frac{245}{2}\theta'''(\tau)E'_4(4p\tau)+\frac{745}{8}\theta''(\tau)E''_4(4p\tau)-\frac{245}{16}\theta'(\tau)E_4'''+\frac{35}{128}\theta(\tau)E_4''''(4p\tau)$$
	For $p=2$ for example, we have that the quadratic residues modulo $8$ are $0,1,4$. Hence, we have that the initial $f_d's$ to construct are $f_0,f_4,f_7$. If $$u(\tau)=\frac{[\theta(\tau),E_{10}(4p\tau)]_1/\Delta(8\tau)+1056\theta(\tau)}{-20}$$ and $$v(\tau)=\frac{[\theta(\tau),E_{8}(4p\tau)]_2/\Delta(8\tau)-11520\theta(\tau)}{72}$$ then $$f_0(\tau)=\theta=1+2q+2q^4+...$$ $$f_4(\tau)=(v(\tau)-u(\tau))/12=q^{-4}-52q+272q^4+...$$ $$f_7(\tau)=(4u(\tau)-v(\tau))/3=q^{-7}-23q-2048q^4+...$$ To construct $f_8(\tau)$, it suffices to consider $f_0(\tau)$, multiply it by $j(8\tau)$ and then substract linear combinations of the initial $f_d's$, this gives $$f_8(\tau)=\theta(\tau)j(8\tau)-2f_4(\tau)-2f_7(\tau)-744\theta(\tau)=q^{-8}+152q+3552q^4+...$$
	Here, there are in fact two ways to obtain the basis in weight $3/2$. Either we use the $f_d's$ constructed above and the Zagier duality or we use the fact that $J_{2,p}(\Gamma^J)$ in bijection with $M_{3/2}^+(\Gamma_0(4p))$ under identification of Fourier coefficients. Thus, we can construct the desired Jacobi forms using the structure theorem, compute explicitly their Fourier expansion and use the coefficient identification to construct the $g_D's$ for $D$ being a quadratic residue modulo $4p$, strictly greater than 0. Once we have these initial forms $g_D,D\equiv\square[4p], 0<D\leq 4p$, the idea is to construct the other forms using the same algorithm as above. $J^!_{even,*}$ is a free polynomial algebra over $M^!(\Gamma)=\CC[E_4,E_6,\Delta^{-1}]$ with generators $a=\tilde{\phi}_{-2,1}$, $b=\tilde{\phi}_{0,1}$. We have seen that $\exists ! \phi_{D,p}\in J_{2,p}^!(\Gamma^J)$ with Fourier coefficients $B(D,4np-r^2)$ satisfying $B(D,-D)=1$, $B(D,d)=0$ for $d=4pn-r^2<0$ and $d\neq -D$, where we recall that $"!"$ denotes the Jacobi forms with $n$ and $r$ running over $\ZZ$ in the Fourier expansion. The structure theorem implies that $$\phi_{D,p}(\tau,z)=\sum_{k=0}^{p}f_{2k+2}a^kb^{p-k}$$ with $f_{2k+2}$ a form in $M^!_*(\Gamma)$ of weight $2k+2$. Consider $p=2$, then $D=1,4,8$. For $D=1$, we can see that the equation $8n-r^2=-1$ has solutions only for $n\geq 0$ so the above polynomial will take coefficient in $M_*(\Gamma)$. This gives $$\phi_{1,2}=c_1E_4ab+c_2E_6a^2$$ for some constants $c_1,c_2$. The same reasoning applies for $D=4$ and we get $$\phi_{4,2}=c_1E_4ab+c_2E_6a^2$$ with $$c_1E_4ab=c_1\Big((\zeta^{-2}+8\zeta^{-1}-18+8\zeta+\zeta^2)+(8\zeta^{-3}+144\zeta^{-2}+2232\zeta^{-1}-4768+2232\zeta+144\zeta^{2}+8\zeta^{3})q+...\Big)$$ $$c_2E_6a^2=c_2\Big((\zeta^{-2}-4\zeta^{-1}+6-4\zeta+\zeta^{2})+(-4\zeta^{-3}-480\zeta^{-2}+1956\zeta^{-1}-2944+1956\zeta-480\zeta^{2}-4\zeta^{3})q+...\Big)$$ For $D=1$, we know that $A(1,4)=-B(1,4)=-52$ and $A(1,7)=-B(1,7)=-23$. From here, we will solve the system of equation $$52=144c_1-480c_2,\,\,23=2232c_1+1956c_2$$ It gives $c_1=1/12=-c_2$ and $$\phi_{1,2}=(\zeta^{-1}-2+\zeta)+(\zeta^{-3}+52\zeta^{-2}+23\zeta^{-1}-152+23\zeta+52\zeta^{2}+\zeta^{3})q+...$$
	For $D=4$, we have $B(4,8n-r^2=4)=-272$ and $B(4,8n-r^2=7)=2048$. We can thus look at a coefficient with $n=1,r=-2$ and with $n=1,r=-1$ in $c_1E_4ab$ and $c_2E_6a^2$ respectively. It gives the system of equations $$144c_1-480c_2=-272,\,\,2232c_1+1956c_2=2048$$ with solutions $c_1=1/3=c_2/2$. We obtain $$\phi_{4,2}=(\zeta^{-2}-2+\zeta^{2})+(-272\zeta^{-2}+2048\zeta^{-1}-3552+2048\zeta-272\zeta^{2})q+...$$
	Now lets come back to where we left. Our goal is to prove that the generating function for the trace is a weight $3/2$ modular form and that we can provide infinite product expansions for the modified Hilbert class polynomial of our hauptmoduln $j_p$ for $p|\#\MM$. Zagier's theorem solves the first question for $p=2,3,5$. Lets now compute the desired product expansion for these primes. Following the notation of \cite{F}, we let $p^{(\tilde{p})}=p/(\tilde{p},p)=1,p$ depending on if $p=\tilde{p}$ or not and $S^{(\tilde{p})}$ be the set of hall divisors of $p$ that divides $p^{(\tilde{p})}$. If we let $t^{(\tilde{p})}$ be the Hauptmodul of the group generated by  $\Gamma_0(p^{(\tilde{p})})$ and all Atkin-Lehner involutions $W_{e,p^{(\tilde{p})}}$ with $e\in S^{(\tilde{p})}$, we get that $t^{(p)}=J(\tau)$ the hauptmodul of $SL_2(\ZZ)$. Otherwise, $t^{(\tilde{p})}=j_p(\tau)$ the hauptmodul of $\Gamma_0(p)^*$. Lets define the generalized trace $$Tr_{m}(d)=\sum_{Q\in \Gamma_0(p)^*\backslash Q_{d,p}}\frac{1}{w_Q}j_{p,m}(\alpha_Q)$$ where $j_{p,m}=j_p|_{T(m)}=\sum_{\gamma\in\Gamma\backslash M_m}j_p(\gamma\tau)=\sum_{ad=m,b[d]}j_p(\frac{a\tau+b}{d})=q^{-m}+O(q)$ is the unique such form. Also, we define $$\tilde{Tr}_m(d)=\sum_{Q\in \Gamma\backslash Q_d}\frac{1}{w_Q} J_m(\alpha_Q)$$ to be the original generalized trace function studied by Zagier.\\
	Let $m$ be a positive integer coprime to $p$ and consider $d=4pn-r^2$. 
	\begin{lemma}
		In the above setting, $$Tr_m(d)=-\text{ coefficient of }q^n\zeta^r\text{ in }\phi_{1,p}|_{T_m}$$ with $T_m$ being the Hecke operator acting on Jacobi forms or equivalently on half-integral forms.
	\end{lemma}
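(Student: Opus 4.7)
The plan is to bootstrap from the $m=1$ case. When $m=1$, the statement reads $\text{Tr}_1(d)=-B^{(p)}(1,d)$, which is exactly Zagier's theorem recalled just above (for $p=2,3,5$; for the remaining genus-zero primes the identification is what the paper promises to establish via Borcherds' theorem on Grassmannians). Under the Eichler--Zagier isomorphism $J_{2,p}(\Gamma^J)\cong M_{3/2}^{+}(\Gamma_0(4p))$, the form $\phi_{1,p}$ maps to $g_{1,p}$, whose Fourier coefficients are exactly $B^{(p)}(1,d)$ with $d=4np-r^{2}$. Thus the lemma holds trivially for $m=1$, and the whole task is to check that both sides transform compatibly under $T_m$.

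On the trace side, one unwinds $j_{p,m}=j_p|T_m=\sum_{\gamma}j_p(\gamma\tau)$ where $\gamma$ ranges over a set of representatives for $\Gamma_0(p)^{*}\backslash\mathcal{M}_m$ (for $(m,p)=1$ the Hecke theory of $\Gamma_0(p)^{*}$ is parallel to that of $SL_2(\ZZ)$, with triangular representatives analogous to the $\mathcal{M}_n^{*}$ constructed earlier). Substituting into $\text{Tr}_m(d)=\sum_{Q}\tfrac{1}{w_Q}j_{p,m}(\alpha_Q)$ and interchanging the finite sums, one rewrites $\text{Tr}_m(d)$ as a linear combination of $\text{Tr}_1$ evaluated at shifted discriminants: if $\gamma$ has determinant $m$, the CM point $\gamma\alpha_Q$ is the root of a quadratic form whose discriminant is related to $-d$ by an elementary linear-algebra computation, and the shift is governed by divisors of $m$ and residues modulo them. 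On the Jacobi side, the operator $T_m$ on $J_{2,p}^{!}$ has an explicit action on Fourier coefficients: the coefficient of $q^n\zeta^r$ in $\phi_{1,p}|T_m$ is a weighted sum of coefficients $B^{(p)}(1,4n'p-r'^{2})$ indexed by compatible $(n',r')$ coming from Hecke double cosets, which matches the half-integral weight Hecke action $T(m^{2})$ on $g_{1,p}$ under the Eichler--Zagier correspondence.

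The main obstacle is matching these two Hecke expansions index by index. One must verify that the parametrisation $\Gamma_0(p)^{*}\backslash\mathcal{M}_m$ on the trace side corresponds bijectively, under the change of variables $d=4np-r^{2}$, to the index set appearing in the Jacobi Hecke formula, and that the multiplicities agree. Equivalently, one needs the Eichler--Zagier isomorphism to be Hecke-equivariant, where on the half-integral side one must track carefully the Kronecker-symbol factor $\bigl(\tfrac{D}{\cdot}\bigr)$ appearing in the explicit formula for $T(p^{2})$ recalled earlier. Once this compatibility is verified, applying the matched Hecke action to both sides of the $m=1$ identity yields the lemma.
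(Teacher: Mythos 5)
Your overall strategy---anchor the identity at $m=1$ via Zagier's theorem and the Eichler--Zagier correspondence $\phi_{1,p}\leftrightarrow g_{1,p}$, then propagate it by Hecke operators---is exactly the strategy of the paper. The problem is that you stop where the proof begins: the ``main obstacle'' you name in your last paragraph (matching the Hecke expansion of $Tr_m$ against the Hecke expansion of the Jacobi/half-integral coefficients, index by index with multiplicities) \emph{is} the content of the lemma, and your proposal defers it rather than carrying it out. Saying ``once this compatibility is verified, the lemma follows'' is circular in effect, because the compatibility statement is equivalent to the lemma. Moreover, attacking general $m$ in one step by comparing the full coset decomposition $\Gamma_0(p)^*\backslash\mathcal{M}_m$ with the coefficient formula for $T(m^2)$ is awkward, since the half-integral weight Hecke operator only has a clean closed coefficient formula for prime $m$; for composite $m$ one needs the prime-power recursions.

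The paper closes this gap concretely in three stages. For a prime $q\mid m$ it uses the explicit identity (Zagier's Theorem 5.ii) $Tr_q(d)=Tr_1(dq^2)+\bigl(\tfrac{-d}{q}\bigr)Tr_1(d)+qTr_1(d/q^2)$, which visibly mirrors the coefficient formula for the half-integral weight operator $T(q^2)$ acting on $g_{1,p}$; combined with the $m=1$ case this gives $Tr_q(d)=-B_q(d)$, i.e.\ minus the $q^n\zeta^r$-coefficient of $\phi_{1,p}|_{T_q}$. For prime powers $q^s\,\|\,m$ it establishes and exploits the recursion $j_p|_{T(q^s)}=j_{p,q^{s-1}}|_{T(q)}-q\,j_{p,q^{s-2}}$ (proved by uniqueness of the form $q^{-q^s}+O(q)$) together with the matching Hecke recursion on the Jacobi side, and inducts on $s$. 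Finally, for $m=lq^s$ with $(l,q)=1$ it inducts on the number of prime factors using multiplicativity of the Hecke operators. To repair your proposal you would need to supply precisely these explicit discriminant-shift formulas on the trace side and check them against the $T(q^2)$ coefficient formula --- the ``elementary linear-algebra computation'' you allude to --- rather than assume the equivariance you are trying to prove.
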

	\begin{proof}
		We will argue by induction. We first prove the result for $q$ a prime divisor of $m$, then we prove it for a power of $q$ being a Hall divisor of $m$ and then for $m=lq^s$ with $(l,q)=1$.\\
		Let $q$ be a prime divisor of $m$ and not the usual $e^{2\pi i \tau}$. Then $$Tr_{q}(d)=\sum \frac{1}{w_Q}j_{p,q}=\sum \frac{1}{w_Q}j_p|_{T(q)}(\alpha_Q)=Tr_1(dq^2)+\Big(\frac{-d}{q}\Big)Tr_1(d)+qTr_1(d/q^2)$$ with $Tr_1(d/q^2)=0$ if $d/q^2\notin \ZZ$ by theorem 5.ii of \cite{Za}. Using theorem 8 of \cite{Za}, we get $$=-\Big(B(dq^2)+\Big(\frac{-d}{q}\Big)B(d)+pB(d/q^2)\Big)=-B_q(d)$$ where $B_q(d)$ is the coefficient of $q^d$ in $g_{1,p}|_{T_q}$ (which is the same as the coefficient of $q^n\zeta^r$ in $\phi_{1,p}|_{T_q}$) by theorem 4.5 of \cite{ZaEi}.\\
		Now let $q^s||m$ be a Hall divisor of $m$. Then by properties of the Hecke operator, we have
		\begin{claim}
			$$j_p|_{T(q^s)}=j_{p,q^{s-1}}|_{T(q)}-qj_{p,q^{s-2}}$$
		\end{claim}
		\begin{proof}{(claim)\\}
			Let $t=j_p, t_m=j_p,m$ and replace $q$ by $u$ in the below proof. Then $$t|_{T(u^s)}=t_{u^s}=q^{-u^s}+O(q)$$ and $$t_{u^{s-1}}|_{T(u)}=\sum_{ad=u,0\leq b<d}t_{u^{s-1}}^{(a)}(\frac{a\tau+b}{d})$$ but since $u$ is prime, we get $1\cdot u=u\cdot 1$ and so $$=\sum_{0\leq b<u}t_{u^{s-1}}(\frac{\tau+b}{u})+t_{u^{s-1}}^{(u)}(u\tau)=\sum_{0\leq b<u}t_{u^{s-1}}(\frac{\tau+b}{u})+t_{u^{s-1}}(u\tau)$$ The term in the summation has leading coefficient being $q^{-u^{s-2}}$ and after taking the sumamtion we get $u$ of these terms. The second term on the right hand side has leading coefficient $q^{-u^s}$. This gives for the right hand side after cancellation of terms $$q^{-u^s}+O(q)$$ but such a form is unique and must be $t|_{T(u^s)}$.
		\end{proof}
		This gives us that $$Tr_{q^s}(d)=Tr_{q^{s-1}}(dq^2)+\Big(\frac{-d}{q}\Big)Tr_{q^{s-1}}(d)+qTr_{q^{s-1}}(d/q^2)-qTr_{q^{s-2}}(d)$$ and suppose by induction on $s$ that the above equals $$=-\Big(B_{q^{s-1}}(dq^2)+\Big(\frac{-d}{q}\Big)B_{q^{s-1}}(d)+qB_{q^{s-1}}(d/p^2)\Big)+qB_{q^{s-2}}(d)$$ then by the isomorphism $$B_{q^{s-1}}=\text{coeff of }g_1|_{T_{u^{s-1}}}=\phi_1|_{T_{q^{s-1}}}$$ we obtain $$=-\text{coefficient of }q^n\zeta^r \text{ in }\phi_{1,p}|_{T_{q^{s-1}}\circ T_{q}}-q\phi_{1,p}|_{T_q^{s-2}}$$ $$=-\text{coefficient of }q^n\zeta^r \text{ in }\phi_{1,p}|_{T_{q^{s}}}$$ by Corollary 1 of \cite{F} and the last claim.\\
		Finally, let $m=lq^s$ with $(l,q)=1$. Let $n(m)$ denotes the number of factor of $m$. We use induction on $n(m)$. If $n(m)=1$ then this reduces to the previous case and the base case is done. Now for $$j_p|_{T(m)}=j_p|_{T(l)\circ T(q^s)}$$ by coprimality of $q^s,l$ and as seen previously, we get that $$=j_{p,l}|_{T(q^s)}=j_{p,lq^{s-1}}|_{T(q)}-qj_{p,lq^{s-2}}$$ By taking the traces we get $$Tr_m(d)=Tr_{lq^{s-1}}(dp^2)+\Big(\frac{-d}{q}\Big)Tr_{lq^{s-1}}(d)+qTr_{lq^{s-1}}(d/q^2)-qTr_{lq^{s-2}}(d)$$ $$=-\text{coefficient of }q^n\zeta^r \text{ in }\phi_{1,p}|_{T_{lq^{s-1}}\circ T_q}-q\phi_{1,p}|_{T_{lq^s}}$$ $$=-\text{coefficient of }q^n\zeta^r \text{ in }\phi_{1,p}|_{T_{lq^s}}$$ by induction on $s$.
	\end{proof}
	\begin{lemma}
		Let $m$ as in the above lemma. Then $$\phi_{1,p}|_{T_m}=\sum_{v|m}v\phi_{v^2,p}$$ 
	\end{lemma}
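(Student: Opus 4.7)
The plan is to use the standard uniqueness argument for weak Jacobi forms with prescribed principal part. Both sides lie in $J_{2,p}^{!}(\Gamma^J)$: the RHS by construction of the $\phi_{v^2,p}$, and the LHS because the assumption $(m,p)=1$ ensures the Hecke operator $T_m$ preserves this space. Their difference is therefore a weak Jacobi form of weight $2$ and index $p$ whose principal part (the Fourier coefficients of $q^n\zeta^r$ with $4pn-r^2<0$) I intend to show vanishes. Any such form is honest, i.e. lies in $J_{2,p}(\Gamma^J)$, and by the isomorphism $\dim J_{2,p}(\Gamma^J)=\dim M_2(\Gamma_0(p)^*)=0$ already invoked in the proof that $\phi_{D,p}$ is unique, the difference must be identically zero.

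Computing the principal parts, I handle the RHS first. By the defining property of $\phi_{D,p}$, each $\phi_{v^2,p}$ contributes a single principal-part term of coefficient $1$ at discriminant $d=-v^2$ and nothing else in the principal region. Hence $\sum_{v\mid m}v\,\phi_{v^2,p}$ has principal-part coefficient $u$ at $d=-u^2$ precisely when $u\mid m$, and $0$ at every other negative discriminant. For the LHS, I translate to the half-integral setting via the Eichler--Zagier isomorphism $J_{2,p}^{!}(\Gamma^J)\cong M_{3/2}^{!,+}(\Gamma_0(4p))$, under which $\phi_{1,p}\mapsto g_{1,p}$ and the Jacobi Hecke operator $T_m$ corresponds to the half-integral weight operator $T(m^2)$ whose explicit action on Fourier coefficients is given in the definition earlier in the paper. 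For $m=q$ an odd prime coprime to $p$, the $q^d$-coefficient of $g_{1,p}|T(q^2)$ is $B(1,q^2d)+\bigl(\tfrac{-d}{q}\bigr)B(1,d)+q\,B(1,d/q^2)$; in the region $d<0$ only the terms $d=-1$ (contributing $1$) and $d=-q^2$ (contributing $q$) survive, since $B(1,\cdot)$ vanishes on negative integers other than $-1$. This matches the RHS principal part $q^{-1}+q\,q^{-q^2}$ corresponding to $\phi_{1,p}+q\,\phi_{q^2,p}$.

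For general $m$ coprime to $p$, I proceed by induction on the number of prime factors of $m$, exactly mirroring the induction in the proof of the previous lemma. Multiplicativity of the Hecke algebra, $T_{m_1m_2}=T_{m_1}T_{m_2}$ when $(m_1,m_2)=1$, reduces the problem to prime powers $m=q^s$, and for these I use the recursion $T_{q^s}=T_{q^{s-1}}T_q-q\,T_{q^{s-2}}$ established in the claim of the previous proof. Applying the inductive hypothesis to $\phi_{1,p}|T_{q^{s-1}}$ and $\phi_{1,p}|T_{q^{s-2}}$ and comparing principal parts term by term yields $\phi_{1,p}|T_{q^s}=\sum_{i=0}^{s}q^i\phi_{q^{2i},p}$, which is the desired identity in the prime-power case. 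The main obstacle is precisely this bookkeeping at prime powers: one must verify that applying the Hecke recursion to a sum of the form $\sum_{i<s}q^i\phi_{q^{2i},p}$ produces exactly the missing top term $q^s\phi_{q^{2s},p}$ in the principal region, with all intermediate contributions cancelling. Once this is checked, the multiplicative case collapses immediately, and combined with Paragraph~1 this completes the proof.
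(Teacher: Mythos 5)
Your proposal follows essentially the same route as the paper: uniqueness of weak Jacobi forms of weight $2$ and index $p$ with prescribed principal part (via $\dim J_{2,p}=\dim M_2(\Gamma_0(p)^*)=0$), an explicit principal-part computation for the prime case using the half-integral Hecke coefficient formula, the recursion $T_{q^s}=T_{q^{s-1}}T_q-q\,T_{q^{s-2}}$ for prime powers, and multiplicativity for general $m$. The one step you defer (``once this is checked'') is exactly the computation the paper carries out, namely $\phi_{q^{2i},p}|_{T_q}=\phi_{q^{2i-2},p}+q\,\phi_{q^{2i+2},p}$ for $i>0$, proved by the same principal-part argument as your base case, after which the sum telescopes to produce the top term $q^s\phi_{q^{2s},p}$.
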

	\begin{proof}
		Let $q$ a prime dividing $m$, $q^s|| m$ be a Hall divisor of $m$ and $\phi_1=\phi_{1,p}$. First we show that $$\phi_1|_{T_{q^s}}=\sum_{i=0}^sq^i\phi_{q^{2i}}$$ For $s=1$, then the coefficient of $g_1|_{T_q}$ is $$B(dq^2)+(\frac{-d}{q})B(d)+qB(d/q^2)=1,q,0$$ whether $d=-1,-p^2$ or $<0$ different of $-1$ and $-p^2$. To see this, for $d=-1$, note that $b(-q^2)=0$ and that $B(-1)=1$ which implies that $(\frac{1}{q})B(-1)=1$ since $1$ is a quadratic residue modulo $q$. For $d=-p^2$, we get $B(-p^4)=0$, $(\frac{p^2}{p})B(-p^2)=0$ and $pB(-1)=p$. For the last case, this follows from the definition of $g_{1,p}$.\\
		Thus we can write $$g_1|_{T_q}=pg_{p^2,p}+g_{1,p}$$ $$=p\Big(q^{-p^2}+\sum_{d\geq 0}B(q^2,d)q^2\Big)+q^{-1}+\sum_{d\geq 0}B(1,d)q^d$$ $$pq^{-p^2}+q^{-1}+\sum_{d\geq 0}\Big(pB(p^2,d)+B(1,d)\Big)q^d$$ and therefore by uniqueness of the forms $q^{-D}+c+O(q)$, we get $$\phi_{1}|_{T_p}=p\phi_{p^2}+\phi_1$$
		Now let $s\geq 2$. Then the coefficient of $q^d$ in $g_1|_{T_{q^s}}$ is $$B_{q^{s-1}}(dq^2)+(\frac{-d}{q})B_{q^{s-1}}(d)+qB(d/q^2)-qB_{q^{s-2}}(d)$$ by claim 2 and what follows about the trace. This implies that $$g_1|{T_{q^s}}=g_1|_{T_{q^{s-1}}\cdot T_q}-qg_1|_{T_{q^{s-2}}}$$ $$\iff$$ $$\phi_1|_{T_{q^{s}}}=\phi_1|_{T_{q^{s-1}}\cdot T_q}-q\phi_1|_{T_{q^{s-2}}}$$ which gives by the induction hypothesis $$(\sum_{i=0}^{s-1}q^i\phi_{q^{2i}})|_{T_q}-q\sum_{i=0}^{s-2}q^i\phi_{q^{2i}}$$ For a fixed $i>0$, the coefficient of $q^d$ in $g_{q^{2i}}|_{T_q}$ (under the correspondence) is $$B(q^{2i},dq^2)+(\frac{-d}{q})B(q^{2i},d)+qB(q^{2i},d/q^2)=1,p,0$$ whenever $d=-q^{2i-2},-q^{2i+2},d<0\,\&\,\neq -q^{2i\pm2}$. This shows that $\phi_{q^{2i}}|_{T_q}=\phi_{q^{2i-2}}+q\phi_{q^{2i+2}},\phi_1+q\phi_{q^2}$ for $i>0$ and $i=0$ respectively. Hence we get $$\phi|_{T_q^s}=(\sum_{i=0}^{s-1}q^i\phi_{q^{2i}})|_{T_q}-q\sum_{i=0}^{s-2}q^i\phi_{q^{2i}}$$ $$=\sum_{i=1}^{s-1}q^i(\phi_{q^{2i-2}}+q\phi_{q^{2i+2}})+\phi_1+q\phi_{q^2}-q\sum_{i=0}^{s-2}q^i\phi_{q^{2i}}=\sum_{i=0}^sq^i\phi_{q^{2i}}$$ Now for $m=lq^s,(l,q)=1,$ we get, as in the last lemma using induction on $n(m)$ that $$\phi_1|_{T_m}=\phi_{T_l\cdot T_{q^s}}=(\sum_{v|l}v\phi_{v^2})|_{T_{q^s}}=\sum_{v|m}v\phi_{v^2}$$
	\end{proof}
	\begin{theorem}
		Let $d=4pn-r^2$, $p=2,3,5$. Then $$Tr_m(d)=-\text{coefficient of }q^n\zeta^r \text{ in }\sum_{0<u|m}2^\xi u\phi_{u^2,p}$$ with $\xi=0$ if $u\neq kp$ and $1$ if $u=kp$ for some $k\in \NN$.
	\end{theorem}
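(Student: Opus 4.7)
The plan is to extend the inductive framework of the two preceding lemmas beyond the coprimality restriction $(m,p)=1$, with the extra factor $2^\xi$ accounting for the contribution of the Fricke involution $W_p$ at divisors of $m$ that are divisible by $p$. Writing $m = p^a m_0$ with $(m_0, p) = 1$, multiplicativity of the Hecke algebra at coprime indices gives $T_m = T_{p^a} \circ T_{m_0}$, and the $T_{m_0}$ factor is handled directly by the previous lemma. Therefore the core new task is to establish the identity
\[
\phi_{1,p}\big|_{T_{p^a}} \;=\; \phi_{1,p} \;+\; 2\sum_{i=1}^{a} p^i\, \phi_{p^{2i},p},
\]
since combining this with the $T_{m_0}$ expansion and regrouping divisors as $u = p^i v_0$ with $v_0\mid m_0$ reproduces exactly the asserted sum $\sum_{u|m} 2^\xi u\, \phi_{u^2,p}$.

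For the base case $a=1$, I would retrace the calculation in the second preceding lemma with the prime taken to be $p$ itself. The new wrinkle is that $j_p$ is $W_p$-invariant, so the relevant double-coset decomposition $\Gamma_0(p)^*\backslash M_p/\Gamma_0(p)^*$ differs from $\Gamma\backslash M_p$: under $W_p$, the matrix $\m{p&0\\0&1}$ is identified with $\m{1&0\\0&p}$, so two representatives in the classical decomposition fuse into a single class. This identification is precisely what doubles the $i=1$ contribution and yields the coefficient $2p$ in front of $\phi_{p^2,p}$. Reading off the coefficient of $q^d$ in $g_{1,p}|_{T_p}$ and matching against $Tr_p(d) = Tr_1(dp^2) + \bigl(\tfrac{-d}{p}\bigr)Tr_1(d) + p\,Tr_1(d/p^2)$ (whose right-hand side already reflects the Fricke identification) then confirms $\phi_{1,p}|_{T_p} = \phi_{1,p} + 2p\,\phi_{p^2,p}$.

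For the inductive step at $a\geq 2$, I would establish an analog of claim 2, namely $j_p|_{T(p^s)} = j_{p,p^{s-1}}|_{T(p)} - p\, j_{p,p^{s-2}}$, again by uniqueness of the normalized form $q^{-p^s}+O(q)$. Combined with the identity $\phi_{p^{2i},p}|_{T_p} = \phi_{p^{2i-2},p} + p\,\phi_{p^{2i+2},p}$ for $i\geq 1$ (with a boundary adjustment at $i=0$ inherited from the base case), a telescoping calculation propagates the factor of $2$ through all higher powers of $p$. Reassembling via $T_m = T_{p^a} \circ T_{m_0}$ and applying the previous lemma to the $T_{m_0}$ factor then yields the full formula, with $\xi=1$ appearing exactly for the divisors $u$ of $m$ divisible by $p$. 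The main obstacle will be making the base-case double-coset bookkeeping rigorous: one must carefully verify that the Fricke involution produces precisely the claimed factor of $2$ with no extraneous corrections, essentially exploiting $[\Gamma_0(p)^*:\Gamma_0(p)]=2$ together with the explicit $j_p|_{T(p)}$ computation. The restriction $p\in\{2,3,5\}$ is inherited from Zagier's theorem invoked implicitly in the preceding lemma, while the combinatorial identity above is formal and prime-uniform.
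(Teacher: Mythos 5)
Your reduction $T_m=T_{p^a}\circ T_{m_0}$ and the target identity $\phi_{1,p}|_{T_{p^a}}=\phi_{1,p}+2\sum_{i=1}^{a}p^i\phi_{p^{2i},p}$ have the right shape, and your treatment of the part of $m$ coprime to $p$ via the previous lemma matches the paper. The gap is in how you handle the prime $p$ itself. Your proposed recursion $j_p|_{T(p^s)}=j_{p,p^{s-1}}|_{T(p)}-p\,j_{p,p^{s-2}}$ cannot be established ``by uniqueness of the normalized form $q^{-p^s}+O(q)$,'' because the right-hand side is not a $\Gamma_0(p)^*$-modular function: the naive coset operator at $p$ contains the term $j_{p,p^{s-1}}(p\tau)$, which is only $\Gamma_0(p^2)$-invariant, so there is no uniqueness statement to invoke. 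The correct recursion at $p$ (Prop.~2.6 and Thm.~3.7 of \cite{F}, which the paper uses) is $j_{p,lp^{k+1}}(\tau)=J_{lp^k}(p\tau)+J_{lp^k}(\tau)-j_{p,lp^k}(\tau)$, in which the \emph{level-one} Faber polynomial $J_{lp^k}$ enters essentially; it is not your identity up to bookkeeping, since $J_{p^{k-1}}\neq j_{p,p^{k-1}}$ (same principal part, different invariance group and tail). Likewise your base-case matching $Tr_p(d)=Tr_1(dp^2)+(\tfrac{-d}{p})Tr_1(d)+p\,Tr_1(d/p^2)$ is the good-prime $T(p^2)$ formula, but $p$ divides the level $4p$ of $g_{1,p}$, so that formula is not available at $p$.

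Consequently the factor $2^{\xi}$ does not arise from two cosets of $\mathcal{M}_p$ fusing under $W_p$. In the paper it comes from taking the trace of the replication identity above over $\Gamma_0(p)^*\backslash Q_{d,p}$: the two terms $J_{lp^k}(p\alpha_Q)+J_{lp^k}(\alpha_Q)$, combined with the bijection $\Gamma_0(p)^*\backslash Q_{d,p}\cong\Gamma\backslash Q_d$, give $Tr_{lp^{k+1}}(d)=2\tilde{Tr}_{lp^k}(d)-Tr_{lp^k}(d)$, where $\tilde{Tr}$ is Zagier's level-one trace. One then needs the level-one statement $\tilde{Tr}_m(d)=-\text{coeff of }q^n\zeta^r\text{ in }\phi_{1,1}|_{T_m}$ (this is where the restriction $p=2,3,5$ actually enters, not merely ``implicitly''), together with the index-raising operator $V_p:J^!_{2,1}\to J^!_{2,p}$ and the lemma $\phi_{v^2p^{2i},1}|_{V_p}=p\,\phi_{v^2p^{2i+2},p}+\phi_{v^2p^{2i},p}$, to convert the level-one data back into index-$p$ Jacobi forms; a telescoping sum then produces exactly the coefficients $2^{\xi}u$. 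None of these ingredients --- the replication formula, the level-one trace $\tilde{Tr}$, or $V_p$ --- appear in your outline, and without them the induction at powers of $p$ does not close.
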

	\begin{proof}
		
		Let $p$ as in the theorem. By proposition 2.6 of  \cite{F}, we have that the generalized Hecke operator $T(p)$ satifies $$T(p^k)\circ T(p)=T(p^{k+1})+pR_p\circ T(p^{k-1})$$ with $j_{p,n}|_{R_p}=J_n$, the Hauptmodul of $SL_2(\ZZ)$ and $k\geq 0$. For $(l,p)=1$, we obtain $$j_{p,lp^{k+1}}=j_{p,l}|_{T(p^{k+1})}=j_{p,l}|_{T(p^{k})\circ T(p)}-pJ_l|_{T(p^{k-1})}$$ $$=j_{p,lp^k}|_{T(p)}-pJ_{lp^{k-1}}=J_{lp^k}(p\tau)+pj_{p,lp^k}|_{U_p}-pJ_{lp^{k-1}}$$ Using theorem 3.7 of \cite{F} gives the formula $$pj_{p,lp^k}|_{U_p}+j_{p,lp^k}=J_{lp^k}+pJ_{lp^{k-1}}$$ Combining both formulas gives $$j_{p,lp^{k+1}}(\tau)=J_{lp^k}(p\tau)+J_{lp^k}(\tau)-j_{p,lp^k}(\tau)$$ and hence 
		$$\sum_{Q\in \Gamma_0(p)^*\backslash Q_{d,p}}j_{p,lp^{k+1}}(\alpha_Q)=\sum_{Q\in \Gamma_0(p)^*\backslash Q_{d,p}}(J_{lp^k}(p\tau)+J_{lp^k}(\tau))|_{\tau=\alpha_Q}-\sum_{Q\in \Gamma_0(p)^*\backslash Q_{d,p}}j_{p,lp^k}(\alpha_Q)$$ Recall the bijection between $\Gamma_0(p)^*\backslash Q_{d,p}$ and $SL_2(\ZZ)\backslash Q_d$. It gives $$\sum_{Q\in \Gamma_0(p)^*\backslash Q_{d,p}}j_{p,lp^{k+1}}(\alpha_Q)=2\sum_{Q\in SL_2(\ZZ)\backslash Q_d}J_{lp^k}(\alpha_Q)-\sum_{Q\in \Gamma_0(p)^*\backslash Q_{d,p}}j_{p,lp^k}(\alpha_Q)$$ Taking now traces on both sides gives $$Tr_{lp^{k+1}}(d)=2\tilde{Tr}_{lp^k}(d)-Tr_{lp^k}(d)$$ for $k\geq 0$.\\
		Let $m=lp^k,(l,p)=1$. We use induction on $k$. For $k=0$ we have already proved the result in the last lemma. Suppose true for $k$ now, then $$Tr_{lp^{k+1}}(d)=2\tilde{Tr}_{lp^k}(d)-Tr_{lp^k}(d)$$ as showed above. We know that $$\tilde{Tr}_{lp^k}(d)=-\text{coeff of }q^d\text{ in }g_{1,1}|_{T_{lp^k}}\in M_{3/2}^{!,+}(\Gamma_0(4))$$ with corresponding Jacobi form $\phi_{1,1}|_{T_{lp^k}}\in J_{2,1}^{!}$ which implies $$2\tilde{Tr}_{lp^k}(d)=-2\text{ coeff of }q^d\text{ in }g_{1,1}|_{T_{lp^k}}=-2\text{ coeff of }q^n\zeta^r\text{ in }\phi_{1,1}|_{T_{lp^k}}$$ and thus $$Tr_{lp^{k+1}}(d)=2\tilde{Tr}_{lp^k}(d)-Tr_{lp^k}(d)$$ $$=-\text{ coeff of }q^n\zeta^r\text{ in }(2\phi_{1,1}|_{T_{lp^k}\circ V_p}-\sum_{u|lp^k}2^\xi u\phi_{u^2})$$ by the induction hypothesis. $$=-\text{ coeff of }q^n\zeta^r\text{ in }(2\phi_{1,1}|_{T_{lp^k}\circ V_p}-(\sum_{i=1}^k\sum_{v|l}2vp^i\phi_{v^2p^{2i},p}+\sum_{v|l}v\phi_{v^2,p}))$$ $$=-\text{ coeff of }q^n\zeta^r\text{ in }(2\sum_{i=0}^k\sum_{v|l}vp^i\phi_{v^2p^{2i},1}|_{V_p}-(\sum_{i=1}^k\sum_{v|l}2vp^i\phi_{v^2p^{2i},p}+\sum_{v|l}v\phi_{v^2,p}))$$
		\begin{lemma}
			$$\phi_{v^2p^{2i},1}|_{V_p}=p\phi_{v^2p^{2i+2},p}+\phi_{v^2p^{2i},p}$$ for $(v,p)=1$ where $|_{V_p}:J_{2,1}^!\to J_{2,p}^!, \, \sum_{n,r}c(n,r)q^n\zeta^r\mapsto \sum_{n,r}(\sum_{a|(n,r,p)}ac(np/a^2,r/a))q^n\zeta^r$.
		\end{lemma}
		\begin{proof}{(lemma)\\}
			For $c(n,r)=c(4n-r^2)=B(v^2p^{2i},4n-r^2)$, we get $B(v^2p^{2i},4n-r^2)\mapsto \sum_{a|(n,r,p)}ac(4np/a^2-r^2/a^2)=\sum_{a|(n,r,p)}ac(4np-r^2/a^2)$. If $4np-r^2=-v^2p^{2i+2}$, then $$=c(-v^2p^{2i+2})+pc(-v^2p^{2i})=B(v^2p^{2i},-v^2p^{2i+2})+pB(v^2p^{2i},-v^2p^{2i})=0+p=p$$ If $4np-r^2=-v^2p^{2i},$ then $$=1+p$$ by exactly the same approach and finally, it is $0$ otherwise. Hence $\phi_{v^2p^{2i},1}|_{V_p}$ is really of the claimed form by uniqueness of $\phi_{D,p}$.
		\end{proof}
		It follows by the above lemma that $$Tr_{lp^{k+1}}(d)=-\text{coefficient of }q^n\zeta^r\text{ in } (2\sum_{i=0}^k\sum_{v|l}vp^i(p\phi_{v^2p^{2i+2},p}+\phi_{v^2p^{2i},p})-\sum_{i=1}^k\sum_{v|l}2vp^i\phi_{v^2p^{2i},p}-\sum_{v|l}v\phi_{v^2})$$ From now we will focus only on the coefficient of $q^n\zeta^r$ $$=\sum_{i=1}^k\sum_{v|l}2vp^{i+1}\phi_{v^2p^{2i+2}}+2vp^i\phi_{v^2p^{2i}}+\sum_{v|l}2vp\phi_{v^2p^2}+v\phi_{v^2}-\sum_{i=1}^k\sum_{v|l}2vp^i\phi_{v^2p^{2i}}$$ $$=\sum_{i=1}\sum_{v|l}2vp^{i+1}\phi_{v^2p^{2i+2}}+\sum_{v|l}2vp\phi_{v^2p^2}+\sum_{v|l}v\phi_{v^2}$$ $$=\sum_{i=0}^k\sum_{v|l}2vp^{i+1}\phi_{v^2p^{2(i+1)}}+\sum_{v|l}v\phi_{v^2}=\sum_{i=1}^{k+1}2vp^i\phi_{v^2p^{2i}}+\sum_{v|l}v\phi_{v^2}$$
	\end{proof}
	Let $z\in \HH$ and consider $\frac{1}{m}j_{p,m}(z)$. Then $$j_p(\tau)-j_p(z)=q^{-1}\exp{(-\sum_{m=1}^\infty\frac{1}{m}j_{p,m}(z)q^m)}$$ Now we have all the tools to prove a generalized Borcherds product formula for the Fricke Group.\\
	We have $$j_p(\tau)-j_p(z)=q^{-1}\exp{(-\sum_{m=1}^{\infty}j_{p,m}(z)q^m/m)}$$ and by taking the modified Hilbert class polynomial on both sides we get $$\mathcal{H}_d(j_p(\tau))=\prod_{Q\in \Gamma_0(p)^*\backslash Q_{d,p}}(j_p(\tau)-j_p(\alpha_Q))^{1/w_Q}=q^{-H(d)}\exp{(-\sum_{m=1}^\infty\sum_{Q}\frac{j_{p,m}(\alpha_Q)}{w_Q}\frac{q^m}{m})}$$ $$=q^{-H(d)}\exp{(\sum_{m=1}^\infty Tr_m(d)\frac{q^m}{m})}$$
	Using our claim that $Tr_m(d)=-\sum_{u|m}u2^\xi B(u^2,d)$, we get $$\mathcal{H}_d(j_p(\tau))=q^{-H(d)}\exp{(\sum_{m=1}^\infty\sum_{u|m}u2^\xi B(u^2,d)q^m/m)}$$ $$=q^{-H(d)}\exp{(\sum_{u=1}^{\infty}u2^\xi B(u^2,d)\sum_{m=1}^\infty q^{um}/um)}$$ $$=q^{-H(d)}\exp{(-\sum_{u=1}^\infty 2^\xi B(u^2,d) \sum_{m=1}^\infty -(q^u)^m/m)}$$ $$=q^{-H(d)}\exp{(\sum_{u=1}^\infty -2^\xi B(u^2,d) log(1-q^u))}$$ $$=q^{-H(d)}\prod_{u=1}^\infty (1-q^u)^{-2^\xi B(u^2,d)}$$ $$=q^{-H(d)}\prod_{u=1}^\infty (1-q^u)^{2^\xi A(u^2,d)}$$ with $A(u^2,d)$ the coefficient of the associated dual form of weight $1/2$. 
	\pagebreak
	
	Now we prove the above product expansion for all primes $p|\#\MM$. The idea is that we will translate Borcherds theorem on automorphic forms with singularities on Grassmannian in terms of modular forms for the Fricke group. For some $p|\#\MM$, we will lift a multiple of the form $f_d$ to a vector valued modular form of weight $1/2$ for some Weil representation of a lattice $M$, denoted $\rho_M$. We then use Borcherds theorem as a black box and prove the existence of a meromorphic function of weight 0 on $\HH$ whose divisors are those of the modified Hilbert class polynomial of discriminant $-d$, $\mathcal{H}_{d,p}(j_p(\tau))$. As a corollary, we derive that the trace function is a Jacobi form of weight 1 and index p. It is thus a weight $3/2$ modular form on the Kohnen plus-space via identification of Fourier coefficients. For the rest of this section, we always consider the principal branch of the square root, namely $(-\pi/2,\pi/2]$.
	\begin{definition}
		The Metaplectic group $M_{p_2}(\RR)$ is the group of pairs $$(A,\phi(\tau))$$ with $M\in SL_2(\RR)$ and $\phi:\HH\to \CC$ a holomorphic function satisfying $\phi(\tau)^2=c\tau+d$. The group law is $$(A_1,\phi_1(\tau))(A_2,\phi_2(\tau))=(A_1A_2,\phi_1(A_2\tau)\phi_2(\tau))$$ which is well defined by the cocycle property of $j(A,\tau)=c\tau+d$.
	\end{definition}
	This group is a double cover of $SL_2(\RR)$. There is thus a locally isomorphic embedding from $SL_2(\RR)\hookrightarrow M_{p_2}(\RR)$ given by $A\mapsto (A,\sqrt{c\tau+d})$. we will refer to this map as the covering map. The idea of double cover can be understood from the existence of the short exact sequence $$\{\pm 1\}\hookrightarrow M_{p_2}(\RR)\twoheadrightarrow SL_2(\RR)$$ Let $M_{p_2}(\ZZ)$ be the image of $SL_2(\ZZ)$ in $M_{p_2}(\RR)$ under the covering map. Then $M_{p_2}(\ZZ)=\Big<(T,1),(S,\sqrt{\tau})\Big>$ with relations $S^2=(ST)^3\triangleq Z=(-I,i)$. We leave as a fact that $Z$ generates the center of $M_{p_2}(\ZZ)$. We now define a representation of this group arising from a lattice $M$ with attached quadratic form $Q(x)$. Recall that a lattice $M$ is a free $\ZZ$-module with positive definite bilinear form $B(x,y)$. From this bilinear form, we can define the quadratic form $Q(x)=\frac{1}{2}B(x,x)$. Hence, given a lattice $M$ and the (assumed) non degenerate attached quadratic form $Q(x)$, we can always diagonalize $Q$ so that $$Q(x)=\sum_{i=1}^{m}a_ix^i-\sum_{i=m+1}^{m+n}b_ix^i$$ with $Q$ acting on $M\otimes_{\ZZ} \QQ$ and where $a_i,b_i>0$. Using this diagonalization, we can define the signature of a lattice to be $(b^+,b^-)$ where $b^+=m,b^-=n$ are the number of positive and negative eigenvalues of the matrix form of $Q$. A natural object coming along with our lattice $M$ is the notion of dual lattice $M'=\{x\in M\otimes \QQ:B(x,y)\in\ZZ,\forall y\in M\}$ and discriminant group $M'/M$. As $M$ is a subgroup of same rank as $M'$, their quotient is a finite abelian group. This means that the associated group algebra $\CC[M'/M]$ is a finite dimensional vector space. This is the vector space that we will use to define the Weil representation of our Metaplectic group of order 2. From now on, we will denote the basis of $\CC[M'/M]$ by $(e_\gamma)_{\gamma\in M'/M}$. From there, to define a representation of $M_{p_2}(\ZZ)$ on $\CC[M'/M]$, it suffices to define how the generators $T,S$ act on the basis $(e_\gamma)_\gamma$. 
	\begin{definition}
		We define the Weil representation $$\rho_M: M_{p_2}(\ZZ)\to GL(\CC[M'/M])$$ by $$\rho_M(T)e_\gamma=e(Q(\gamma))e_\gamma$$ and $$\rho_M(S)e_\gamma=\frac{\sqrt{i}^{b^--b^+}}{\sqrt{|M'/M|}}\sum_{\delta\in M'/M}e(-B(\gamma,\delta))e_\delta$$ This representation is unitary.
	\end{definition}
	\begin{remark}
		This representation is often refered to in the litterature as being the Weil representation attached to the quadratic module $(M'/M,Q)$.
	\end{remark}
	Finally, one last definition 
	\begin{definition}
		Let $M$ be an even lattice of signature $(2,n)$ equipped with a non degenerate quadratic form $Q(x)=\frac{1}{2}B(x,x)$, $M'$ its dual and $\CC[M'/M]$ the group ring of $M'/M$ with basis $(e_\gamma)_{\gamma\in M'/M}$. A vector valued modular form of weight $k\in \frac{1}{2}\ZZ$ and type $\rho_M$ is a holomorphic function $F:\HH\to \CC[M'/M]$ $$F(\tau)=\sum_{\gamma\in M'/M}f_\gamma(\tau) e_\gamma$$ on the upper half-plane satisfying $$f_\gamma(\tau+1)=e(Q(\gamma))f_\gamma(\tau)$$ $$f_\gamma(-1/\tau)=\sqrt{\tau}^{2k}\frac{\sqrt{i}^{n-2}}{\sqrt{|M'/M|}}\sum_{\delta\in M'/M}e(-B(\gamma,\delta))f_\delta(\tau)$$ Remark that if we define $$F|_k(A)(\tau)=\sqrt{c\tau+d}^{-2k}\rho_M(A)^{-1}f(A\tau)$$ for $A\in M_{p_2}(\ZZ)$ we obtain that $F$ transforms like a modular form of weight $k$ and automorphy factor $\rho_M$ if $F|_k=F$. We can check that $F|_k(A)|_k(B)=F|_k(AB)$ and so it suffices to define a vector valued modular form only with respect to the generators $T,S$.
	\end{definition}
	Let $p$ be a prime dividing the order of the Monster $\MM$ and consider $M$ to be the $3$-dimensional even lattice of all symmetric matrices of the form $$\xi=\begin{pmatrix}c/p&-b/2p\\-b/2p&a/p\end{pmatrix}$$ with $a/p,b/2p,c\in \ZZ$ with norm $B(\xi,\xi)=-2pdet(\xi)=(b^2-4ac)/2p$. The lattice $M$ splits naturally according to its diagonal elements as the direct sum of the $2$-dimensional hyperbolic unimodular even lattice $\ZZ\sm 1/p&0\\ 0&0 \esm+\ZZ\sm 0&0\\ 0&1 \esm$ and the lattice $\ZZ\sm 0&1\\ 1&0 \esm$ of norm $2p$. Its dual lattice is the set of matrices $$\begin{pmatrix}c/p&-b/2p\\-b/2p&a/p\end{pmatrix}$$ with $a/p,b,c\in \ZZ$. The discriminant group $M'/M$ can be naturally identified with $\ZZ/2p\ZZ$ by mapping a matrix of $M'$ to the value of $b[2p]$ and $\Gamma_0(p)^*$ acts on $M$ via orthogonal transformations $v\mapsto XvX^t$ for $v\in M,X\in \Gamma_0(p)^*$. In fact the automorphism group of $M$ is precisely $\Gamma_0(p)^*$ \cite{Bo2}. This is the major fact we need in order to use Borcherds theorem. Consider $$Gr(M)=\{v\in M\otimes \RR:dim(v)=2,Q|_v \geq 0\}$$ the Grassmannian of $M$. If we let $X_M$ and $Y_M$ (of same norm) be an oriented orthogonal base of some element $v\in Gr(M)$, then we can map $v$ to the point $Z_M=X_M+iY_M\in M\otimes \CC$ (which implies that $Z_M$ has norm 0 by definition of $X_M,Y_M$). We can see that any such basis (and thus point $Z_M$) is represented by a matrix $\sm \tau^2&\tau\\ \tau&1 \esm$ for $\tau\in \HH$. This is simply mapping a point $\tau\in \HH$ to the $2$-dimensional positive definite space spanned by the real and imaginary part of the norm 0 vector $\sm \tau^2&\tau\\ \tau&1 \esm$ which is indeed a surjective application and every element of the Grassmannian can be reached this way. Now we recall some definitions for modular forms of half-integral weight. Let $\mathfrak{G}$ be the group consisting of all pairs
	$(A,\psi (\tau))$ with $A=\sm a&b\\c&d \esm\in
	GL_2^+(\mathbb{R})$ and $\psi (\tau)$ a holomorphic function on $\HH$ satisfying $|\psi (\tau)|=(\det
	A)^{-1/4} |c\tau +d|^{1/2}$ and whose group operation is defined by $$(A,\psi_1 (\tau))(B,\psi_2 (\tau))=(AB, \psi_1 (B\tau) \psi_2 (\tau))$$
	For $f:\HH \to \mathbb{C}$ and $\xi=(A,\psi (\tau))\in
	\mathfrak{G}$, we let $f|_{[\xi]_{k+1/2}}=f|_\xi=\psi (\tau)^{-2k-1} f(A\tau)$. It follows that
	$f|_{\xi_1}|_{\xi_2}=f|_{\xi_1 \xi_2}$. There is a natural monomorphism
	$\Gamma_0(4)\to \mathfrak{G}$ given by $A\mapsto
	A^*=(A,j(A,\tau))$, where $j(A,\tau)=\left(\frac{c}{d} \right)
	\left(\frac{-1}{d} \right)^{-1/2} (c\tau +d)^{1/2}$
	for $A=\sm a&b\\c&d \esm$.
	Then, we'll say that $f$ is a modular form of weight $k+1/2$ on
	$\Gamma_0(4N)$ if it satisfies $f|_{A^*}=f$ for every $A\in
	\Gamma_0(4N)$ and is holomorphic at the cusps. Otherwise, if we allow our function to have poles at cusps, we'll employ the natural terminology of saying that our function is a weakly holomorphic modualr form of half-integral weight.\\
	Let $f=\sum_{n\in\mathbb{Z}} c(n)q^n \in M_{1/2}^! (\Gamma_0(4p))$ be a weakly holomorphic modular form of weight $1/2$ and consider $\beta \in\mathbb{Z}/2p\mathbb{Z}$ for which we set $$ h_\beta (\tau) = 2^{s(\beta,p)}
	\sum_{n\equiv \beta^2 [4p]} c(n) q^{n/4p} $$ where $s(\beta,p)=1$ if $\beta\equiv 0,p\,\,[2p]$ and 0 otherwise. We can see that $$h_\beta (\tau+1) = \zeta_{4p}^{\beta^2} h_\beta (\tau)$$ Also, for $j$ coprime to $4p$, we let $f_j= f|_{\left(\sm 1&j\\0&4p\esm,(4p)^{1/4}\right)}|_{W_{4p}}$ where $W_{4p}=\left( \sm
	0&-1\\4p&0 \esm,(4p)^{1/4}\sqrt{-i\tau} \right)$. By definition
	$f_j=f|_{\left(\sm 4pj&-1\\16p^2&0 \esm, 2\sqrt{-pi\tau}\right)}$ and if we let $b,d$ integers satisfying $jd-4pb=1$ with $\psi_j=\left(\frac{4p}{j}\right)\sqrt{\left(\frac{-1}{j}\right)}
	\zeta_8^{-1}$, then we get that $\sm j&b\\4p&d \esm^* \left( \sm
	4p&-d \\ 0&4p \esm, \psi_j \right) =\left(\sm 4pj&-1\\16p^2&0
	\esm, 2\sqrt{-pi\tau}\right)$. Hence we can rewrite $$f_j=f|_{\sm j&b\\4p&d \esm^*
		\left( \sm 4p&-d \\ 0&4p \esm, \psi_j \right)}=\psi_j^{-1}
	f\left(\tau-\frac{j^{-1}}{4p}\right)$$ and see that
	\begin{equation}
	f|_{\left(\sm 1&j\\0&4p \esm,
		(4p)^{1/4}\right)}|_{W_{4p}}=\psi_j^{-1}
	f\left(\tau-\frac{j^{-1}}{4p}\right) \label{AA}
	\end{equation}
	where $j^{-1}$ denotes the inverse of $j$ modulo $4p$. The left hand side of (\ref{AA}) is
	\begin{align}
	(4p)^{-1/4} f\left(\frac{\tau+j}{4p}\right)
	|_{W_{4p}} &=(4p)^{-1/4}(\sum_{n\in\mathbb{Z}}
	c(n)\zeta_{4p}^{nj}q^{n/4p})|_{W_{4p}}
	= (4p)^{-1/4}\frac{1}{2}\left( \sum_{\beta [2p]} \zeta_{4p}^{\beta^2j}h_\beta
	\right)|_{W_{4p}} \notag \\
	&=(4p)^{-1/2}\frac{1}{2}(\sqrt{-i\tau})^{-1}
	\left( \sum_{\beta [2p]} \zeta_{4p}^{\beta^2j}h_\beta
	\right)\left(-\frac{1}{4p\tau} \right). \label{BB}
	\end{align}
	and the right hand side is
	\begin{equation}
	\psi_j^{-1}(\sum_{n\in\mathbb{Z}} c(n)\zeta_{4p}^{-nj^{-1}}q^{n})
	=\psi_j^{-1}\frac{1}{2}\left(\sum_{\beta [2p]} \zeta_{4p}^{-\beta^2j^{-1}}h_\beta
	\right)(4p\tau).
	\label{CC}
	\end{equation}
	Replacing $\tau$ by $\tau/4p$ in (\ref{BB}) and (\ref{CC}), we get \begin{equation}\label{DD}
	\sum_{\beta [2p]} \zeta_{4p}^{\beta^2j}h_\beta(-1/\tau)=\left(
	\frac{4p}{j}\right)\sqrt{\left(\frac{-1}{j}\right)}^{-1}\sqrt{\tau}
	\cdot \sum_{\beta [2p]}
	\zeta_{4p}^{-\beta^2j^{-1}}h_\beta(\tau).
	\end{equation}
	Let $R$ be a $2p\times 2p$ matrix defined by
	$R=\frac{\zeta_8^{-1}}{\sqrt{2p}}\left(
	\zeta_{2p}^{-l\beta}\right)_{0\le l,\beta < 2p}.$ To show that
	$\sum_{\beta [2p]} h_{\beta} \mathfrak{e}_{\beta} $ is a vector
	valued modular form, we check that
	\begin{equation} \label{inversion}
	\sm h_0 \\ \vdots \\ h_\beta \\ \vdots\esm \left( -1/\tau\right)
	=\sqrt{\tau} \, R\sm h_0 \\ \vdots \\ h_\beta \\ \vdots\esm
	(\tau).
	\end{equation}
	Since $h_\beta=h_{-\beta}$, the above identity is equivalent to
	\begin{equation} \label{EE}
	A\sm h_0 \\ \vdots \\ h_\beta \\ \vdots\esm \left( -1/\tau\right)
	=\sqrt{\tau} \, AR\sm h_0 \\ \vdots \\ h_\beta \\ \vdots\esm
	(\tau)
	\end{equation}
	for some matrix $A$ with $2p$ columns of which the first $p+1$
	ones are linearly independent and span the other ones. We take
	$A=\left(
	\zeta_{4p}^{\beta^2 j_l}\right)_{1\le l \le \varphi (4p) \atop
		0\le \beta < 2p}$ where $j_l$ is the $l$-th largest element in the
	set $\{j \, | \, 1\le j \le 4p \text{ and } (j,4p)=1 \}$. We can
	check that the rank of $A$ is $p+1$ unless $p=2$ but since we've already worked out this case before, we will not go over it. From
	the Gauss quadratic sum identity it follows that
	$AR=\left( \left(
	\frac{4p}{j_l}\right)\sqrt{\left(\frac{-1}{j_l}\right)}^{-1}
	\zeta_{4p}^{-\beta^2j_l^{-1}} \right)_{1\le l \le \varphi (4p) \atop
		0\le \beta < 2p}$. Identity (\ref{EE})
	follows from (\ref{DD}).\\
	Now we work out the Borcherds lift of our vector valued modular form. Take $M$ as before, let $G(M)$ denote the Grassmannian of $M$ and recall that it can be identified with the upper half plane by mapping $\tau\in \h$ to the two dimensional positive definite spaces spanned by the real and imaginary parts of the norm zero vector $\sm \tau^2&\tau\\ \tau&1 \esm$. Let $F=\sum_\beta h_\beta \mathfrak{e}_\beta$ be the vector valued
	modular form associated to
	$f=\sum_{n\in\mathbb{Z}}c(n)q^n\in M_{1/2}^! (\Gamma_0(4p))$ as presented above. We write $\sum_{n\in\mathbb{Z}} c_\beta(n/4p)e(n\tau/4p)$
	for the Fourier expansion of $h_\beta(\tau)$ so that $c_\beta(n/4p)=2^{s(\beta,p)}c(n)$ for $n\equiv \beta^2[4p]$ and 0 otherwise. Borcherds theorem \cite{Bo3} implies that there is a meromorphic function $\Psi_f$ on $G(M)$ that has an infinite product expansion (explicitly given by Item 5 in Theorem 13.3 of \cite{Bo3}) satisfying
	\begin{itemize}
		\item[1.]$\Psi_f$ is an automorphic form of weight $c_0(0)/2$ for the group $Aut(M,F)$ with respect to some unitary character $\chi$ of $Aut(M,F)$.
		\item[2.] The only zeros or poles of $\Psi_f$ lie on the rational quadratic divisors $\lambda^\bot$ for $\lambda\in M$, $\lambda^2 <0$ and are zeros of order $$\sum_{0< x \in\mathbb{R} \atop x\lambda\in M'} c_{x\lambda}(x^2 \lambda^2/2)$$ or poles if this number is negative.
	\end{itemize}
	Translating the above in our setting gives 
	\begin{theorem}{(Borcherds theorem for the Fricke Group)}\label{language} 
		\begin{itemize}
			\item[1.] For some $h$, the Borcherds lift of $f$ is defined as $$ \Psi_f (\tau) = q^{-h} \prod_{n>0} (1-q^n)^{c_\beta(n^2/4p)} $$
			where for each $n>0$ we take the Fourier coefficient having $\beta\equiv n[2p]$.\\ Then $\Psi_f$ is a meromorphic modular form for the group $\Gamma_0(p)^*$ of weight $c_0(0)/2$ and some unitary character $\chi$.
			\item[2.] The possible zeros or poles of $\Psi_f$ occur
			either at cusps or at imaginary quadratic
			irrationals $\alpha\in \HH$ satisfying $pa\alpha^2+b\alpha+c=0$ with $(a,b,c)=1$. Also, letting $\delta=b^2-4pac<0$, we obtain the multiplicity of the zero of $\Psi_f$ at $\alpha$ which is given by $\sum_{n>0} c_\beta(\delta n^2/4p)$
			where
			for each $n>0$,
			$\beta$ is chosen such that
			$\beta^2\equiv\delta n^2  [4p]$.
		\end{itemize}
	\end{theorem}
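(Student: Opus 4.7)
The plan is to deduce Theorem \ref{language} by specialising Borcherds' general theorem on automorphic forms with singularities on Grassmannians to the three-dimensional lattice $M$ constructed just before the statement, and then translating every piece of its conclusion from Grassmannian/lattice language into modular-form language on $\Gamma_0(p)^*$. First I would record the three inputs needed by Borcherds' theorem. The lattice $M$ of symmetric matrices $\sm c/p&-b/2p\\-b/2p&a/p\esm$ with $a/p,b/2p,c\in\ZZ$ is even of signature $(2,1)$, its dual $M'$ is obtained by relaxing $b/2p$ to $b\in\ZZ$, the discriminant group $M'/M\cong\ZZ/2p\ZZ$ is parametrised by $b\bmod 2p$, and, as cited from \cite{Bo2}, $\aut(M)=\Gamma_0(p)^*$ acting by $v\mapsto XvX^t$. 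The latter is the key fact that makes the modular transformation law land on the Fricke group rather than some proper subgroup. The vector-valued form $F=\sum_\beta h_\beta\e_\beta$ attached to $f=\sum c(n)q^n$ was shown in the displays leading to (\ref{DD}) and (\ref{inversion}) to transform according to $\rho_M$, so it is an admissible input.

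Next I would transport Borcherds' output from $G(M)$ to $\HH$ via the identification $\tau\mapsto\mathrm{span}_\RR\{\Re\xi_\tau,\Im\xi_\tau\}$ with $\xi_\tau=\sm\tau^2&\tau\\\tau&1\esm$, already spelled out in the excerpt. Under this identification $\aut(M)$ acts on $G(M)$ exactly as $\Gamma_0(p)^*$ does on $\HH$ by fractional linear transformations, so the automorphic form $\Psi_f$ of weight $c_0(0)/2$ with unitary character $\chi$ produced by Borcherds becomes a meromorphic modular form of the same weight and character on $\Gamma_0(p)^*$, giving item 1 of the theorem.

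For the explicit $q$-product and the constant $h$, I would specialise Item 5 of Theorem 13.3 of \cite{Bo3} to the cusp at $\infty$. The cusp $\infty$ of $X_0(p)^*$ corresponds to a primitive norm-zero vector inside the hyperbolic sublattice $\ZZ\sm 1/p&0\\0&0\esm\oplus\ZZ\sm 0&0\\0&1\esm$, and pairing this cusp vector against elements of the discriminant group shows that a Fourier term $q^{n/4p}$ from $h_\beta$ contributes to the product exactly when $\beta\equiv n\pmod{2p}$. Feeding this component-selection rule into Borcherds' formula immediately yields $\Psi_f(\tau)=q^{-h}\prod_{n>0}(1-q^n)^{c_\beta(n^2/4p)}$ with the prescribed choice of $\beta$, and $h$ is the Weyl-vector constant at $\infty$ read off from Borcherds. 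For item 2 I would translate the divisor condition: a nonzero $\lambda=\sm c/p&-b/2p\\-b/2p&a/p\esm\in M'$ is orthogonal to $\xi_\tau$ precisely when $pa\tau^2+b\tau+c=0$, and $\lambda^2<0$ becomes $\delta=b^2-4pac<0$, while primitivity of $\lambda$ in $M'$ is equivalent to $(a,b,c)=1$. Borcherds' multiplicity $\sum_{0<x\in\RR,\,x\lambda\in M'}c_{x\lambda}(x^2\lambda^2/2)$ then reduces to a sum over integers $n>0$ with $x=n$, the residue class of $x\lambda$ in $M'/M$ forcing $\beta^2\equiv\delta n^2\pmod{4p}$ and the norm giving $x^2\lambda^2/2=\delta n^2/4p$; this yields the claimed order $\sum_{n>0}c_\beta(\delta n^2/4p)$.

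The main obstacle, in my view, is the combinatorial bookkeeping in the last two paragraphs: verifying that the cusp-pairing and the primitive-scaling really do force precisely the residue conditions $\beta\equiv n\pmod{2p}$ and $\beta^2\equiv\delta n^2\pmod{4p}$, and keeping track of the factor $2^{s(\beta,p)}$ built into the definition of $h_\beta$ to correct for the identification $\beta\sim-\beta$ at the two self-conjugate classes $\beta\equiv 0,p\pmod{2p}$. Once that dictionary between $(M,\rho_M)$-data and $(\Gamma_0(p)^*,\chi)$-data is nailed down, the rest of the proof is a direct specialisation of Borcherds' black-box theorem, with no further analytic input required.
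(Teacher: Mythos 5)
Your proposal follows essentially the same route as the paper: construct the vector-valued form $F=\sum_\beta h_\beta\mathfrak{e}_\beta$ from $f$, invoke the key fact $\aut(M)=\Gamma_0(p)^*$ together with the identification of the Grassmannian of $M$ with $\HH$ via $\tau\mapsto\sm \tau^2&\tau\\ \tau&1\esm$, and then specialise Theorem 13.3 of \cite{Bo3} as a black box, translating the product expansion and the rational quadratic divisors into the stated conditions on $(a,b,c)$, $\delta$, and $\beta$. Your write-up of the divisor and multiplicity dictionary is in fact somewhat more explicit than the paper's, which passes from Borcherds' two bullet points to the theorem with only the phrase ``translating the above in our setting gives.''
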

	We are almost there since the Fourier coefficients $c_\beta$ have the shape we have obtained for the primes $p=2,3,5$. Consider one of the weakly holomorphic modular forms we have constructed previously, say $f_{d,p}$ in $M_{1/2}^!(\Gamma_0(4p))$. It has Fourier expansion of the form
	$q^{-d}+\sum_{D\ge 1} A(D,d)q^D$ and its constant term is $0$. Taking its Borcherds lift, we obtain that $\Psi_{f_{d,p}}$ is a meromorphic modular form of weight $0$ for some unitary character of $\Gamma_0(p)^*$ with $q$-product expansion given by 
	$$ \Psi_{f_{d,p}}(\tau)=q^{-h} \prod_{n=1}^\infty
	(1-q^n)^{A^*(n^2,d)}$$
	for some $h$ where $A^*(n^2,d)=2A(n^2,d)$ if $p|n$ and $A(n^2,d)$ otherwise. Pick an integer $\beta[2p]$ with $\beta^2\equiv -d[4p]$. By the above theorem again, we have the zeros of $\Psi_{f_{d,p}}$ occur at $\alpha_Q$ for every $Q\in Q_{d,p,\beta}$ with multiplicity $2^{s(\beta,p)}=1$ if $$p\not|\beta\iff\beta\not\equiv -\beta[2p]\iff W_p\circ Q\not\in Q_{d,p,\beta}$$ and multiplicity $2^{s(\beta,p)}=2$ if $$p|\beta\iff\beta\equiv -\beta[2p]\iff W_p\circ Q\in Q_{d,p,\beta}$$ Now if we consider the modified Hilbert class polynomial, we can see that the multiplicity of the zero of $j_p(\tau)-j_p(\alpha_Q)$ at $\alpha_Q$ is $w_Q({\Gamma_0(p)^*)}=w_Q({\Gamma_0(p)})$ if $$W_p\circ Q\neq Q[\Gamma_0(p)]$$ and is $w_Q({\Gamma_0(p)^*})=2w_Q({\Gamma_0(p)})$ if $$W_p\circ Q= Q[\Gamma_0(p)]$$ We see that $\Psi_{f_{d,p}}$ and $\mathcal{H}_{d,p}$ have the same divisor on $\HH$. Since some power of $\Psi_{f_{d,p}}$, say $\Psi_{f_{d,p}}^k$, is a modular function for $\Gamma_0(p)^*$ and since $X_0(p)^*$ has only the cusp at $\infty$, the divisor of $\Psi_{f_{d,p}}^k$ on this curve has the form $\sum_{Q\in \Gamma_0(p)\backslash\mathcal{Q}_{d,p,\beta}}k\Big(\frac{1}{w_Q(\Gamma_0(p))}\Big)\alpha_Q-kh\infty$ and degree zero. From this observation we get
	$$\prod_{Q\in \Gamma_0(p)\backslash\mathcal{Q}_{d,p,\beta}}
	(j_p(\tau)-j_p(\alpha_Q))^{\frac{1}{w_Q(\Gamma_0(p))}}=q^{-h}\prod_{n=1}^\infty
	(1-q^n)^{A^*(n^2,d)}$$ where $h=H_p(d)=\sum_{Q\in \Gamma_0(p)\backslash\mathcal{Q}_{d,p,\beta}}\frac{1}{w_Q(\Gamma_0(p))}$.
	\begin{corollary} 
		Let $p|\#\MM$ and $\forall d\equiv \square[4p]$, we define
		$\textbf{\emph{t}}^{(p)}(d)=\sum_{Q\in\mathcal{Q}_{d,p,\beta}/\Gamma_0(p)}
		\frac{1}{|\bar\Gamma_0(p)_Q|} j_p (\alpha_Q)$. We put $\textbf{\emph{t}}^{(p)}(-1)=-1$, $\textbf{\emph{t}}^{(p)}(0)=2$ and $\textbf{\emph{t}}^{(p)}(d)=0$
		for $d<-1$. Then the series $\sum_{n,r} \textbf{\emph{t}}^{(p)}(4pn-r^2) q^n \zeta^r$ is a weakly holomorphic Jacobi form of weight 2 and index $p$ and thus a weakly holomorphic modular form of weight $3/2$ and level $4p$ in the Kohnen plus-space.
	\end{corollary}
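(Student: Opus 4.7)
The plan is to extract $\textbf{t}^{(p)}(d)$ from the subleading coefficient of the product formula for $\mathcal{H}_{d,p}(j_p(\tau))$ just established, identify this coefficient with a Fourier coefficient of $g_{1,p}$ via Zagier duality, and then promote the resulting scalar identity to a Jacobi form identity using the Eichler-Zagier isomorphism $J^!_{2,p}(\Gamma^J) \cong M^{!,+}_{3/2}(\Gamma_0(4p))$.

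First I would expand both sides of the identity
$$\mathcal{H}_{d,p}(j_p(\tau)) = q^{-H_p(d)}\prod_{n=1}^\infty (1-q^n)^{A^*(n^2,d)}$$
up to order $q^{1 - H_p(d)}$. Since $j_p(\tau) = q^{-1} + O(q)$, each factor on the left satisfies $(j_p(\tau) - j_p(\alpha_Q))^{1/w_Q} = q^{-1/w_Q}(1 - j_p(\alpha_Q)q/w_Q + O(q^2))$, so the full product reads $q^{-H_p(d)}(1 - \textbf{t}^{(p)}(d)q + O(q^2))$. The right-hand side expands as $q^{-H_p(d)}(1 - A^*(1,d)q + O(q^2))$, and since $p \nmid 1$ we have $A^*(1,d) = A(1,d)$. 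Matching coefficients gives $\textbf{t}^{(p)}(d) = A(1,d)$ for all admissible $d \geq 1$.

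Next I would invoke Theorem \ref{duality}, which yields $A(1,d) = -B(1,d)$, where $B(1,d)$ is the $q^d$-Fourier coefficient of the weight $3/2$ form $g_{1,p} \in M^{!,+}_{3/2}(\Gamma_0(4p))$. The conventional values $\textbf{t}^{(p)}(-1) = -1$ and $\textbf{t}^{(p)}(0) = 2$ specified in the corollary are arranged precisely so that $\textbf{t}^{(p)}(d) = -B(1,d)$ continues to hold at the leading term $B(1,-1) = 1$ and the constant term $B(1,0) = -2$ of $g_{1,p}$, and the values $\textbf{t}^{(p)}(d) = 0$ for $d < -1$ match the plus-space support condition.

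To close the argument I would apply the Eichler-Zagier correspondence, under which $g_{1,p}$ is the scalar image of the unique Jacobi form $\phi_{1,p} \in J^!_{2,p}(\Gamma^J)$ with Fourier expansion $\sum_{n,r} B(1, 4pn - r^2) q^n \zeta^r$. Consequently,
$$\sum_{n,r} \textbf{t}^{(p)}(4pn - r^2) q^n \zeta^r = -\phi_{1,p}(\tau,z),$$
which is a weakly holomorphic Jacobi form of weight $2$ and index $p$; its image $-g_{1,p}$ under the isomorphism is a weight $3/2$ weakly holomorphic modular form of level $4p$ sitting inside the Kohnen plus-space, as claimed. The most delicate point in the plan is the very first step: reliably extracting $\textbf{t}^{(p)}(d)$ from $\mathcal{H}_{d,p}(j_p(\tau))$ while correctly reconciling the $1/w_Q$ stabilizer weights, the $A^*$ versus $A$ normalization (i.e.\ keeping track of the extra factor of $2$ when $p \mid n$, which does not intervene at $n=1$), and the sign conventions tying $\textbf{t}^{(p)}$ to $B(1,d)$; once these are in place, every subsequent step is a direct citation of results already proved in the excerpt.
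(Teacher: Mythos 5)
Your proposal is correct and follows exactly the route the paper intends: the corollary is stated as an immediate consequence of the Borcherds product expansion $\prod_Q (j_p(\tau)-j_p(\alpha_Q))^{1/w_Q} = q^{-H_p(d)}\prod_n (1-q^n)^{A^*(n^2,d)}$, from which one reads off $\textbf{t}^{(p)}(d)=A^*(1,d)=A(1,d)=-B(1,d)$ and then passes to $-\phi_{1,p}$ via the Eichler--Zagier correspondence. Your handling of the boundary values $\textbf{t}^{(p)}(-1)=-1$, $\textbf{t}^{(p)}(0)=2$ and of the $A^*$ versus $A$ normalization at $n=1$ supplies precisely the details the paper leaves implicit.
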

	

	

\begin{thebibliography}{9}
		\bibitem{Bo1} 
		\textit{Automorphic forms on $O_{s+2,2}(\RR)$ and infinite products}, R.E. Borcherds, Invent Math (1995) 120: 161. https://doi.org/10.1007/BF01241126
		\bibitem{Bo2} 
		\textit{The Gross-Kohnen-Zagier theorem in higher dimensions}, R.E. Borcherds, Duke Math. J. 97 (1999), no. 2, 219--233. doi:10.1215/S0012-7094-99-09710-7. https://projecteuclid.org/euclid.dmj/1077228648
		\bibitem{Bo3} 
		\textit{Automorphic forms with singularities on Grassmannians}, R.E. Borcherds, Invent math (1998) 132: 491. https://doi.org/10.1007/s002220050232
		\bibitem{Za} 
		\textit{Traces of singular moduli}, D. Zagier, Motives, Polylogarithms and Hodge Theory, Part I. International Press Lecture Series (Eds. F. Bogomolov and L. Katzarkov), International Press (2002),
		211–244.
		
		\bibitem{Koh} 
		\textit{Modular forms of half-integral weight on $\Gamma_0(4)$}, W. Kohnen, Math. Ann. 248 (1980), 249–266.
		\bibitem{Jen} 
		\textit{Traces of singualr moduli, modular forms and Maass forms}, P.M. Jenkins, University of Wisconsin-Madison.
		\bibitem{MCJD} 
		\textit{Rademacher Sums and Rademacher Series}, Miranda C. N. Cheng, John F. R. Duncan, 	arXiv:1210.3066 [math.NT]
		\bibitem{ZaEi} 
		\textit{The Theory of Jacobi Forms}, M. Eichler, D. Zagier, Birkhäuser Boston 1985, https://doi.org/10.1007/978-1-4684-9162-3
		\bibitem{DS} 
		\textit{A First Course in Modular Forms}, F. Diamond, J. Shurman, Graduate Texts in Mathematics, Springer, 10.1007/978-0-387-27226-9
		\bibitem{F} 
		\textit{Replication Formulae for n||h-Type Hauptmoduls}, Ferenbaugh, journal of algebra 179, 808]837 (1996), ARTICLE NO. 0038.
		\bibitem{K1} 
		\textit{Borcherds products associated with certain Thompson series}, Chang Heon Kim, arXiv:math/0203076 [math.NT].
		\bibitem{K2} 
		\textit{Traces of singular values and Borcherds products}, Chang Heon Kim, arXiv:math/0305183 [math.NT].
		
		
	\end{thebibliography}
\end{document}